\newtheorem{theorem}{Theorem}[section]
\newtheorem{lemma}{Lemma}[section]
\newtheorem{remark}{Remark}[section]
\numberwithin{equation}{section}
\begin{document}

  \title{
Uniform convergence and a posteriori error estimation for assumed stress hybrid finite element
 methods}
\author{  Guozhu Yu$^1$
 \ ,\quad
 Xiaoping Xie$^{1}$\thanks{Corresponding author}
 \ ,\quad Carsten Carstensen$^{2,3}$
 \\
 { \small $_1$ \ School of Mathematics, Sichuan University, Chengdu 610064,  China}\\
{\small Email: yuguozhumail@yahoo.com.cn, xpxiec@gmail.com}\\
 { \small $_2$  \  Institut f$\ddot{\mbox{u}}$r Mathematik, Humboldt Universit$\ddot{\mbox{a}}$t zu Berlin, Unter den Linden 6,
       10099 Berlin, Germany}\\
   {\small $3$ \ Department of
       Computational Science and Engineering, Yonsei University, 120-749 Seoul, Korea }    \\
 {\small Email: cc@math.hu-berlin.de}\\
}
\date{}

\maketitle \vspace*{-9mm}

 \begin{center}
 \begin{minipage}{5.0in}
{\bf \textbf{Abstract}:}\,\,\, Assumed stress hybrid  methods are known to
 improve the performance of standard   displacement-based finite elements
 and are widely used in    computational mechanics. The methods are based on the Hellinger-Reissner
 variational principle for the displacement
 and stress variables.  
This work analyzes two existing 4-node hybrid stress
 quadrilateral elements due to Pian and Sumihara [Int. J. Numer. Meth. Engng, 1984] and due to Xie and Zhou [Int. J. Numer. Meth. Engng, 2004], which behave robustly in numerical benchmark
 tests.    For the finite elements,  the isoparametric bilinear  interpolation is used for the  displacement approximation,  while different piecewise-independent 5-parameter modes are employed for the   stress approximation. 
 We show   that  the two schemes are free from Poisson-locking,  in the sense that the error bound in the a
 priori estimate is independent of the relevant Lam$\acute{\mbox{e}}$
 constant $\lambda$. We also   establish the equivalence  of the methods to two assumed enhanced strain schemes.  Finally, we derive reliable and efficient residual-based a
posteriori error estimators for  the stress in $L^{2}$-norm and the displacement  in $H^{1}$-norm, and verify the theoretical results by some numerical experiments.    

\par
\vspace{0.5cm}
\par
{\textbf{Key words}}:\,\,\, Finite element,\ \ \ Assumed 
stress hybrid method,  \ \ \ Hellinger-Reissner principle,\ \ \  Poisson-locking, \ \ \ A posteriori estimator

\end{minipage}
\end{center}

 \setcounter{remark}{0} \setcounter{lemma}{0} \setcounter{theorem}{0}
\setcounter{section}{1} \setcounter{equation}{0}
\section*{1. \ Introduction}
Let $\Omega\subset \mathbb{R}^{2}$ be a bounded open set with
boundary $\Gamma=\Gamma_{D}\bigcup\Gamma_{N}$, where
meas($\Gamma_{D}$)$>$0. The plane linear elasticity model is given
by
\begin{equation}\label{model1}
\left\{
\begin{array}{ll}
 -\mathbf{div}\boldsymbol{\sigma}
=\mathbf{f}  &\mbox{in}\quad\Omega,\\
\boldsymbol{\sigma}=\mathbb{C}\varepsilon(\mathbf{u}) &\mbox{in}\quad\Omega,\\
 \mathbf{u}|_{\Gamma_{D}}=0,
 \boldsymbol{\sigma}\mathbf{n}|_{\Gamma_{N}}=\mathbf{g},\\
\end{array}
\right.
\end{equation}
where $\boldsymbol{\sigma}\in\mathbb{R}^{2\times2}_{sym}$ denotes the symmetric stress tensor field,
 $\mathbf{u}\in\mathbb{R}^{2}$  the displacement field,
 $\varepsilon(\mathbf{u})=(\nabla \mathbf{u}+\nabla^{T}\mathbf{u})/2$  the strain,
 $\mathbf{f}\in\mathbb{R}^{2}$  the body loading density,
 $\mathbf{g} \in\mathbb{R}^{2}$ the surface traction,
 $\mathbf{n}$  the unit outward vector normal to $\Gamma$,
 and $\mathbb{C}$ the elasticity modulus  tensor with 
$$\mathbb{C}\varepsilon(\mathbf{u})=2\mu \varepsilon(\mathbf{u})+\lambda \mbox{div}\mathbf{u}\ \mathbf{I},$$ $\mathbf{I}$  the $2\times 2$ identity tensor,
and $\mu,\lambda$  the Lam$\acute{\mbox{e}}$ parameters given by
 $\mu=\frac{E}{2(1+\nu)}$, $\lambda=\frac{E\nu}{(1+\nu)(1-2\nu)}$ for plane strain problems and by $\mu=\frac{E}{2(1+\nu)}$, $\lambda=\frac{E\nu}{(1+\nu)(1-\nu)}$ for plane stress problems,
with $0<\nu<0.5$  the Poisson ratio and $E$
the Young's modulus.

It is well-known  that the standard 4-node displacement quadrilateral element (i.e.
 isoparametric bilinear element) yields poor results at coarse meshes for   problems with bending and suffers from  "Poisson locking"  for   plane strain problems, at the nearly
incompressible limit  ($\lambda\rightarrow\infty$ as $\nu\rightarrow 0.5$).    We refer  to \cite{Babuska-Suri1992} for   the mathematical characteristic of
 locking.
To improve the performance of the isoparametric bilinear displacement
 element while preserving its convenience, 
various methods  have been suggested
 in literature.   
 
 The method of incompatible
 displacement modes is based on  enriching the standard displacement modes
 with internal incompatible displacements.  A representative incompatible displacement is  the so-called Wilson  element proposed by Wilson, Taylor, Doherty, and Ghaboussi \cite{Wilson-Taylor-Doherty-Ghaboussi1973}.  It achieves a greater degree of accuracy than the  isoparametric bilinear element when using coarse meshes.  This element was
subsequently modified by Taylor, Wilson and Beresford \cite{Taylor-Wilson-Beresford1976}, and the modified Wilson element behaves uniformly in the nearly incompressibility.    In  \cite{Lesaint1976},  Lesaint
 analyzed convergence on uniform square meshes for Wilson element.  He and Zl$\acute{\mbox{a}}$mal then established convergence for the modified
 Wilson element on arbitrary quadrilateral meshes \cite{Lesaint-Zlamal1980}.
 In \cite{Shi1984}, Shi established a convergence condition for the quadrilateral Wilson
 element.  In  \cite{Zhang1997}, Zhang derived uniform convergence for the modified
 Wilson element on arbitrary quadrilateral meshes.
 
 The assumed-stress hybrid approach is a kind of mixed method based on the Hellinger-Reissner variational principle which
 includes displacements and stresses.   The  pioneering work in this direction is by Pian \cite{Pian1964}, where the assumed
stress field assumed to satisfy the homogenous equilibrium equations pointwise.  In \cite{Pian-Chen1982} Pian and Chen proposed
a new type of the hybrid-method by imposing the stress equilibrium equations in a variational
sense and by adopting the natural co-ordinate for stress approximation.  In  \cite{Pian-Sumihara1984} Pian and Sumihara  derived the famous assumed
stress hybrid element (abbreviated as the PS finite element) through a rational choice of stress terms.  Despite of the use of isoparametric
bilinear displacement approximation,  the  PS finite element yields uniformly accurate results for all the numerical benchmark  tests.  Pian and Tong \cite{Pian-Tong1989} discussed the similarity and  basic difference between the incompatible
displacement model and the hybrid stress model. In the direction  of determining the optimal stress parameters, there have been many other research efforts \cite{Pian-Wu1988,Xie-Zhou2004, Xie2005,Xie-Zhou2008,Zhou-Nie2001}.  In \cite{Xie-Zhou2004, Xie-Zhou2008}, Xie and Zhou derived robust 4-node hybrid stress quadrilateral elements by optimizing stress modes with a so-called energy-compatibility condition,  i.e. the assumed stress terms are orthogonal to the enhanced strains caused by Wilson bubble displacements.  
In \cite{Zhou-Xie2002}  a  convergence  analysis was established for the PS  element, but the upper bound in the error estimate  is not uniform with respect to $\lambda$.  So far there is no  uniform error analysis with respect to the nearly incompressibility for the assumed stress hybrid methods on arbitrary quadrilateral meshes.
 
 Closely related to the assumed stress method is the enhanced assumed strain method (EAS) pioneered by Simo
and Rifai \cite{Simo-Rifai1990}. Its variational basis is  the Hu-Washizu principle which includes displacements, stresses, and
 enhanced strains. It was shown in  \cite{Simo-Rifai1990} that the classical method of
incompatible displacement modes is a special case of the EAS-method. Yeo and Lee  \cite{Yeo-Lee1996} proved that the EAS concept in some model 
 situation is equivalent to a Hellinger-Reissner formulation. In \cite{Reddy-Simo1995}, Reddy and Simo established an a priori error estimate for the EAS method on
 parallelogram meshes.   Braess \cite{Braess1998} re-examined
 the sufficient conditions for convergence, in particular relating the stability
 condition to a strengthened Cauchy inequality, and elucidating the
 influence of the Lam$\acute{e}$ constant $\lambda$. In \cite{Braess-Carstensen-Reddy2004}, Braess, Carstensen and Reddy
 established uniform convergence and a posteriori estimates for the EAS method on
 parallelogram meshes.

 The main goal of this work is to establish
 uniform convergence and a posteriori error estimates for two 4-node assumed stress hybrid
 quadrilateral elements: the  PS finite element by Pian and Sumihara \cite{Pian-Sumihara1984}
 and the ECQ4 finite element by Xie and Zhou  \cite{Xie-Zhou2004}.  Equivalence is established between the hybrid finite element schemes and  two EAS proposed schemes. We also carry out an a posteriori error analysis for the hybrid methods.

The paper is organized as follows. In Section 2 we  discuss the uniform stability of the weak
 formulations. Section 3 is devoted to  finite element formulations of the hybrid elements PS and ECQ4 and their numerical   performance investigation. We establish the uniformly stability conditions and derive uniform a priori error estimates in Section 4.   Equivalence between the hybrid schemes and  two EAS schemes is discussed in Section 5. We devote Section 6 to an analysis of a posteriori error estimates for the hybrid methods and verification of the theoretical results  
 by numerical tests.

 \setcounter{remark}{0}\setcounter{lemma}{0} \setcounter{theorem}{0}
\setcounter{section}{2} \setcounter{equation}{0}
\section*{2. \  Uniform stability of the weak formulations}

First we introduce some notations. Let $L^{2}(T;X)$ be  the space of square
integrable functions defined on $T$ with values in the finite-dimensional vector space X and with norm being denoted by $||\cdot||_{0,T}$. We denote by
$H^{k}(T;X)$ the usual Sobolev space consisting of functions defined on $T$, taking values
in $X$, and with all derivatives of order up to  $k$ square-integrable. The  norm  on $H^{k}(T;X)$ is denoted by $||\cdot||_{k,T}:= (\sum_{0\leq j\leq k}|v|_{j,T}^{2})^{1/2}, $ with $|\cdot|_{k,T} $ the  semi-norm  derived from the partial derivatives of order equal to $k$.  
When there is no conflict, we may abbreviate them to $||\cdot||_{k}$ and $|\cdot|_{k}$.   Let $L^{2}_{0}(\Omega)$ be the space of  square
integrable functions with zero
mean values. We denote by $P_{k}(T)$ the set of polynomials of degree less than or equal to $k$, by $Q_{k}$ the set of polynomials of degree less than or equal to $k$ in each variable. 

For convenience, we use the notation $a\lesssim b$  to represent that there
 exists a generic positive constant $C$, independent of the mesh parameter
 $h$ and the Lam$\acute{e}$ constant $\lambda$, such that
 $a \leq Cb$. Finally, $a\approx b$ abbreviates $a\lesssim b\lesssim a$.

We define  two spaces as follows:
 $$V:=H_{D}^{1}(\Omega)^{2}=\{\mathbf{u}\in H^{1}(\Omega)^{2}: \mathbf{u}|_{\Gamma_{D}}=0\},$$
 \[
 \Sigma:=\left\{
\begin{array}{ll}
 \mathbf{L}^{2}(\Omega;\mathbb{R}_{sym}^{2\times2}), &if\,\, meas(\Gamma_{N})>0,\\
 \{\boldsymbol{\tau} \in \mathbf{L}^{2}(\Omega;\mathbb{R}_{sym}^{2\times2}): \int_{\Omega}tr\boldsymbol{\tau} d\mathbf{x}=0\}, &if\,\,
 \Gamma_{N}=\emptyset,
\end{array}
\right.
\]
where $\mathbf{L}^{2}(\Omega;\mathbb{R}_{sym}^{2\times2})$
 denotes the space of square-integrable symmetric tensors with the norm $||\cdot||_{0}$ defined by $||\boldsymbol{\tau}||_{0}^{2}:=\int_{\Omega}\boldsymbol{\tau}:\boldsymbol{\tau} d{\bf x}$, and $tr \boldsymbol{\tau}:=\boldsymbol{\tau}_{11}+\boldsymbol{\tau}_{22}$ represents the trace of the tensor $\boldsymbol{\tau}$.  Notice that on the space $V$, the semi-norm $|\cdot|_{1}$ is equivalent to the norm $||\cdot||_{1}$.
 
 The Hellinger-Reissner variational principle for the model
 (\ref{model1}) reads as:  Find $(\boldsymbol{\sigma},\mathbf{u})\in \Sigma\times V$ with
 \begin{equation}\label{weak1.a}
 a(\boldsymbol{\sigma},\boldsymbol{\tau})-\int_{\Omega}\boldsymbol{\tau}:\varepsilon(\mathbf{u})d\mathbf{x}=0 \hskip5mm\,\mbox{for all }\boldsymbol{\tau}\in
\Sigma,
\end{equation}
\begin{equation}\label{weak1.b}
 \int_{\Omega}\boldsymbol{\sigma}:\varepsilon(\mathbf{v})d\mathbf{x}
 =F(\mathbf{v})\,  \hskip1.3cm
 \mbox{for all } \mathbf{v}\in V,
 \end{equation}
where
\begin{eqnarray*}
a(\boldsymbol{\sigma},\boldsymbol{\tau}):&=&\int_{\Omega}\boldsymbol{\sigma}:\mathbb{C}^{-1}\boldsymbol{\tau} d\mathbf{x}=\frac{1}{2\mu}\int_{\Omega}\left(\boldsymbol{\sigma}:\boldsymbol{\tau}-\frac{\lambda}{2(\mu+\lambda)}tr\boldsymbol{\sigma} tr\boldsymbol{\tau}\right) d\mathbf{x}\\
&=&\int_{\Omega}\left(\frac{1}{2\mu}\boldsymbol{\sigma}^{D}:\boldsymbol{\tau}^{D}+\frac{1}{4(\mu+\lambda)}tr\boldsymbol{\sigma} tr\boldsymbol{\tau}\right) d\mathbf{x},\\
F(\mathbf{v}):&=&\int_{\Omega}\mathbf{f}\cdot\mathbf{v}d\mathbf{x}+\int_{\Gamma_{N}}\mathbf{g}\cdot\mathbf{v}ds.\end{eqnarray*}
Here and throughout the paper,
$\boldsymbol{\sigma}:\boldsymbol{\tau}=\sum_{i,j=1}^{2}\boldsymbol{\sigma}_{ij}\boldsymbol{\tau}_{ij},$
 and   $\boldsymbol{\tau}^{D}:=\boldsymbol{\tau}-\frac{1}{2}tr\boldsymbol{\tau} \mathbf{I}$.

 The following continuity conditions are immediate:
 \begin{equation}\label{a-continuity} 
 a(\boldsymbol{\sigma},\boldsymbol{\tau})\lesssim ||\boldsymbol{\sigma}||_{0}||\boldsymbol{\tau}||_{0},\ \boldsymbol{\sigma},\ \boldsymbol{\tau}\in \Sigma,
 \end{equation}
 \begin{equation}\label{b-continuity} 
 \int_{\Omega}\boldsymbol{\tau}:\varepsilon(\mathbf{v})d\mathbf{x}\lesssim ||\boldsymbol{\tau}||_{0}|\mathbf{v}|_{1},\  \boldsymbol{\tau}\in \Sigma,\ \mathbf{v}\in V,
 \end{equation} 
 \begin{equation}\label{F-continuity} 
 F(\mathbf{v})\lesssim (||\mathbf{f}||_{-1}+||\mathbf{g}||_{-\frac{1}{2},\Gamma_{N}})|\mathbf{v}|_{1},\   \mathbf{v}\in V.
 \end{equation}

 According to the theory of mixed finite element methods \cite{Brezzi1974,Brezzi-Fortin1991}, we need the following two  stability conditions for the  
 well-posedness of the weak problem
 (\ref{weak1.a})-(\ref{weak1.b}).\\
  ($\mathrm{A1}$) \ Kernel-coercivity: For any $\boldsymbol{\tau}\in Z:=\{\boldsymbol{\tau}\in \Sigma:
 \int_{\Omega}\boldsymbol{\tau}:\varepsilon(\mathbf{v})d\mathbf{x}=0\ \ \mbox{for all } \mathbf{v}\in V\}$
 it holds
 $$\|\boldsymbol{\tau}\|_{0}^{2}\lesssim a(\boldsymbol{\tau},\boldsymbol{\tau});$$
 ($\mathrm{A2}$) \ Inf-sup condition: For any $\mathbf{v}\in V$ it holds
 $$ |\mathbf{v}|_{1}\lesssim \sup_{0\neq\boldsymbol{\tau}\in \Sigma}
 \frac{\int_{\Omega}\boldsymbol{\tau}:\varepsilon(\mathbf{v})d\mathbf{x}}{\|\boldsymbol{\tau}\|_{0}} .$$
 
 The proof of  (A1)-(A2) utilizes a lemma of Bramble, Lazarov and Pasciak.

\begin{lemma} (\cite{Bramble-Lazarov-Pasciak2001}) \ 
 For
 $q\in L:=
 \left\{
\begin{array}{ll}
 L^{2}(\Omega)\ \ &if\,\, meas(\Gamma_{N})>0,\\
 L_{0}^{2}(\Omega)\ \  &if\,\, \Gamma_{N}=\emptyset
\end{array}
\right.$ it holds
\begin{eqnarray*}
  \|q\|_{0}\lesssim\sup\limits_{\mathbf{v}\in V}
 \frac{\int_{\Omega}q \, {\bf div}\mathbf{v}d\mathbf{x}}{|\mathbf{v}|_{1}}
.
\end{eqnarray*}
\end{lemma}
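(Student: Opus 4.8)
The plan is to reduce the claimed inf-sup inequality to the existence of a bounded right inverse of the divergence operator. Suppose that for every $q\in L$ (the case $q=0$ being trivial) one can find a field $\mathbf{v}_q\in V$ with $\mathbf{div}\,\mathbf{v}_q=q$ and the stability bound $|\mathbf{v}_q|_1\lesssim\|q\|_0$. Testing the supremum with this particular $\mathbf{v}_q$ then yields
\begin{equation*}
\sup_{\mathbf{v}\in V}\frac{\int_{\Omega}q\,\mathbf{div}\,\mathbf{v}\,d\mathbf{x}}{|\mathbf{v}|_1}\geq\frac{\int_{\Omega}q\,\mathbf{div}\,\mathbf{v}_q\,d\mathbf{x}}{|\mathbf{v}_q|_1}=\frac{\|q\|_0^2}{|\mathbf{v}_q|_1}\gtrsim\|q\|_0,
\end{equation*}
which is precisely the assertion. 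Thus the entire burden of the proof is shifted onto the construction of $\mathbf{v}_q$.

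That construction rests on the classical result of Ne\v{c}as (equivalently the Bogovskii/Ladyzhenskaya inf-sup theorem for the Stokes problem): on a bounded Lipschitz domain the operator $\mathbf{div}:H_0^1(\Omega)^2\to L_0^2(\Omega)$ is surjective and admits a bounded right inverse, so that each $p\in L_0^2(\Omega)$ equals $\mathbf{div}\,\mathbf{w}$ for some $\mathbf{w}\in H_0^1(\Omega)^2$ with $|\mathbf{w}|_1\lesssim\|p\|_0$. In the case $\Gamma_N=\emptyset$ we have $V=H_0^1(\Omega)^2$ and $L=L_0^2(\Omega)$, so this statement is exactly what is needed, and the proof is complete after the quotient estimate above.

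The remaining case $\mathrm{meas}(\Gamma_N)>0$, in which $L=L^2(\Omega)$ carries no zero-mean constraint, I would handle by a rank-one correction that exploits the extra freedom available on $\Gamma_N$. Fix once and for all a smooth field $\mathbf{b}\in C^\infty(\overline{\Omega})^2$ supported near $\Gamma_N$ with $\mathbf{b}|_{\Gamma_D}=0$ and $c:=\int_{\Omega}\mathbf{div}\,\mathbf{b}\,d\mathbf{x}=\int_{\Gamma_N}\mathbf{b}\cdot\mathbf{n}\,ds\neq0$; such a field exists precisely because $\Gamma_N$ has positive measure. Given $q\in L^2(\Omega)$, put $\alpha:=\big(\int_{\Omega}q\,d\mathbf{x}\big)/c$, so that $q-\alpha\,\mathbf{div}\,\mathbf{b}\in L_0^2(\Omega)$; applying the classical right inverse to this zero-mean function produces $\mathbf{w}\in H_0^1(\Omega)^2$ with $\mathbf{div}\,\mathbf{w}=q-\alpha\,\mathbf{div}\,\mathbf{b}$ and $|\mathbf{w}|_1\lesssim\|q\|_0$. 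Then $\mathbf{v}_q:=\mathbf{w}+\alpha\,\mathbf{b}$ belongs to $V$ (it vanishes on $\Gamma_D$), satisfies $\mathbf{div}\,\mathbf{v}_q=q$, and obeys $|\mathbf{v}_q|_1\lesssim\|q\|_0+|\alpha|\,|\mathbf{b}|_1\lesssim\|q\|_0$, since $|\alpha|\lesssim|\int_{\Omega}q\,d\mathbf{x}|\lesssim\|q\|_0$ by Cauchy--Schwarz.

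The main obstacle is the classical surjectivity-with-bounded-inverse statement invoked in the second paragraph; everything else is bookkeeping. Its proof is genuinely nontrivial and proceeds either through the explicit Bogovskii integral operator on star-shaped pieces together with a finite covering of $\Omega$, or by duality from the Ne\v{c}as inequality $\|p\|_0\lesssim\|p\|_{-1}+\|\nabla p\|_{-1}$. Since the lemma is quoted from \cite{Bramble-Lazarov-Pasciak2001}, in the present paper it is simply cited rather than reproved; the argument sketched here records how that cited result delivers both boundary-condition cases.
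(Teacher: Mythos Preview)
Your argument is correct: the reduction to a bounded right inverse of the divergence is exactly the content of the lemma, the pure Dirichlet case is the classical Bogovski\u{\i}/Ne\v{c}as/Ladyzhenskaya result, and your rank-one correction via a fixed field $\mathbf{b}\in V$ with $\int_\Omega\mathrm{div}\,\mathbf{b}\neq0$ cleanly handles the case $\mathrm{meas}(\Gamma_N)>0$. The estimate $|\mathbf{w}|_1\lesssim\|q-\alpha\,\mathrm{div}\,\mathbf{b}\|_0\lesssim\|q\|_0$ and the bound on $|\alpha|$ are routine, and the inclusion $\mathbf{v}_q\in V$ holds since both $\mathbf{w}$ and $\mathbf{b}$ vanish on $\Gamma_D$.

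As you already note in your last paragraph, the paper does not prove this lemma at all: it is stated as a quotation from \cite{Bramble-Lazarov-Pasciak2001} and used as a black box in the proof of Theorem~2.1 and (via its discrete analogues) in Section~4. So there is no ``paper's own proof'' to compare your approach against; your sketch supplies precisely the kind of justification the authors elected to outsource to the reference.
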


The following stability result is given in \cite{Braess-Carstensen-Reddy2004} for the model situation $\Gamma_{N}=\emptyset$.
\begin{theorem}
The uniform stability conditions (A1) and (A2) hold.
\end{theorem}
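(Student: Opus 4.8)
The plan is to verify (A2) directly by an elementary choice of test stress, and to reduce (A1) to the Bramble--Lazarov--Pasciak estimate of Lemma 2.1; the latter is the only place where uniformity in $\lambda$ is genuinely at stake, since neither the form $\int_\Omega\boldsymbol{\tau}:\varepsilon(\mathbf{v})\,d\mathbf{x}$ nor the norm $\|\cdot\|_0$ appearing in (A2) involves $\lambda$.

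For (A2) I would simply take the test stress $\boldsymbol{\tau}=\varepsilon(\mathbf{v})$. When $\mathrm{meas}(\Gamma_N)>0$ this lies in $\Sigma=\mathbf{L}^2(\Omega;\mathbb{R}^{2\times2}_{sym})$ trivially; when $\Gamma_N=\emptyset$ one has $V=H_0^1(\Omega)^2$, so $\int_\Omega tr\,\varepsilon(\mathbf{v})\,d\mathbf{x}=\int_\Omega\mathbf{div}\,\mathbf{v}\,d\mathbf{x}=0$ by the divergence theorem and again $\varepsilon(\mathbf{v})\in\Sigma$. With this choice the supremum in (A2) is bounded below by $\|\varepsilon(\mathbf{v})\|_0$, and Korn's inequality together with the equivalence $|\cdot|_1\approx\|\cdot\|_1$ on $V$ (valid because $\mathrm{meas}(\Gamma_D)>0$) gives $|\mathbf{v}|_1\lesssim\|\varepsilon(\mathbf{v})\|_0$, which is exactly (A2), with a $\lambda$-independent constant.

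For (A1) the starting point is the orthogonal split $\boldsymbol{\tau}=\boldsymbol{\tau}^D+\tfrac12 tr\boldsymbol{\tau}\,\mathbf{I}$, which yields the two identities $\|\boldsymbol{\tau}\|_0^2=\|\boldsymbol{\tau}^D\|_0^2+\tfrac12\|tr\boldsymbol{\tau}\|_0^2$ and $\int_\Omega\boldsymbol{\tau}:\varepsilon(\mathbf{v})\,d\mathbf{x}=\int_\Omega\boldsymbol{\tau}^D:\varepsilon(\mathbf{v})\,d\mathbf{x}+\tfrac12\int_\Omega tr\boldsymbol{\tau}\,\mathbf{div}\,\mathbf{v}\,d\mathbf{x}$. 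Since $a(\boldsymbol{\tau},\boldsymbol{\tau})\ge\frac{1}{2\mu}\|\boldsymbol{\tau}^D\|_0^2$ with $\mu$ bounded independently of $\lambda$, the deviatoric part is already controlled and the crux is to dominate the trace part $\|tr\boldsymbol{\tau}\|_0$ by $\|\boldsymbol{\tau}^D\|_0$ on the kernel $Z$. Here I would use that $\boldsymbol{\tau}\in Z$ forces $\int_\Omega\boldsymbol{\tau}:\varepsilon(\mathbf{v})\,d\mathbf{x}=0$ for all $\mathbf{v}\in V$, so the second identity gives $\int_\Omega tr\boldsymbol{\tau}\,\mathbf{div}\,\mathbf{v}\,d\mathbf{x}=-2\int_\Omega\boldsymbol{\tau}^D:\varepsilon(\mathbf{v})\,d\mathbf{x}$ for every $\mathbf{v}\in V$. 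Observing that $tr\boldsymbol{\tau}\in L$ (the constraint $\int_\Omega tr\boldsymbol{\tau}\,d\mathbf{x}=0$ holds by the very definition of $\Sigma$ when $\Gamma_N=\emptyset$), Lemma 2.1 applied to $q=tr\boldsymbol{\tau}$ together with the Cauchy--Schwarz bound $|\int_\Omega\boldsymbol{\tau}^D:\varepsilon(\mathbf{v})\,d\mathbf{x}|\le\|\boldsymbol{\tau}^D\|_0|\mathbf{v}|_1$ yields $\|tr\boldsymbol{\tau}\|_0\lesssim\|\boldsymbol{\tau}^D\|_0$.

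Combining the two displays gives $\|\boldsymbol{\tau}\|_0^2\lesssim\|\boldsymbol{\tau}^D\|_0^2\lesssim a(\boldsymbol{\tau},\boldsymbol{\tau})$ with constants independent of $\lambda$, which is (A1). I expect the one delicate ingredient to be precisely this transfer from the trace to the deviatoric part via Lemma 2.1: without it, $\|tr\boldsymbol{\tau}\|_0$ could only be controlled through the $\lambda$-degenerate term $\frac{1}{4(\mu+\lambda)}\|tr\boldsymbol{\tau}\|_0^2$ in $a$, so uniformity would be lost as $\lambda\to\infty$. The remaining care is bookkeeping, namely checking the memberships $\varepsilon(\mathbf{v})\in\Sigma$ and $tr\boldsymbol{\tau}\in L$ consistently in the two boundary-condition regimes $\mathrm{meas}(\Gamma_N)>0$ and $\Gamma_N=\emptyset$.
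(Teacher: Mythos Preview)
Your proposal is correct and follows essentially the same approach as the paper: for (A2) you take $\boldsymbol{\tau}=\varepsilon(\mathbf{v})$ and invoke Korn's inequality, and for (A1) you use the deviatoric/trace split together with Lemma~2.1 to bound $\|tr\boldsymbol{\tau}\|_0$ by $\|\boldsymbol{\tau}^D\|_0$ on the kernel $Z$. If anything, your treatment is slightly more careful than the paper's, since you explicitly verify the memberships $\varepsilon(\mathbf{v})\in\Sigma$ and $tr\boldsymbol{\tau}\in L$ in both boundary-condition regimes, whereas the paper simply asserts them.
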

\begin{proof}
 Firstly we prove (A1). 
 Since
 $$a(\boldsymbol{\tau},\boldsymbol{\tau})=\int_{\Omega}\left(\frac{1}{2\mu}\boldsymbol{\tau}^{D}:\boldsymbol{\tau}^{D}+\frac{1}{4(\mu+\lambda)}tr\boldsymbol{\tau} tr\boldsymbol{\tau}\right) d\mathbf{x},$$
 we only need to prove $\|tr\boldsymbol{\tau}\|_{0}\lesssim\|\boldsymbol{\tau}^{D}\|_{0}$ for any $\boldsymbol{\tau}\in Z$.
  
  In fact, for  $\boldsymbol{\tau}\in Z$ and any $\mathbf{v}\in V$, it holds
\begin{eqnarray*}
 0 &= &\int_{\Omega}\boldsymbol{\tau}:\varepsilon(\mathbf{v})d\mathbf{x}=\int_{\Omega}(\frac{1}{2}tr\boldsymbol{\tau}\mathbf{I}+\boldsymbol{\tau}^{D}):\varepsilon(\mathbf{v})d\mathbf{x}\\
   &= &\int_{\Omega}\frac{1}{2}tr\boldsymbol{\tau}
        \mbox{div}\mathbf{v}d\mathbf{x}+\int_{\Omega}\boldsymbol{\tau}^{D}:\varepsilon(\mathbf{v})d\mathbf{x}.
\end{eqnarray*}
 Thus,  by Lemma 2.1 we obtain
\begin{eqnarray*}
 \|tr\boldsymbol{\tau}\|_{0}\lesssim \sup\limits_{\mathbf{v}\in V}
 \frac{\int_{\Omega}tr\boldsymbol{\tau}\ \mbox{div}\mathbf{v}d\mathbf{x}}{|\mathbf{v}|_{1}}
 =\sup\limits_{\mathbf{v}\in V}
 \frac{-2\int_{\Omega}\boldsymbol{\tau}^{D}:\varepsilon(\mathbf{v})d\mathbf{x}}{|\mathbf{v}|_{1}}
  \leq 2\|\boldsymbol{\tau}^{D}\|_{0}.
\end{eqnarray*}
This implies (A1).  For the proof of (A2), let $\mathbf{v} \in V$ and notice $\varepsilon(\mathbf{v})\in \Sigma$. Then
 $$
|\varepsilon(\mathbf{v})|_{0}\leq \sup_{\boldsymbol{\tau}\in \Sigma\setminus\{0\}}
 \frac{\int_{\Omega}\boldsymbol{\tau}:\varepsilon(\mathbf{v})d\mathbf{x}}{\|\boldsymbol{\tau}\|_{0}}
.$$
Hence (A2) follows from   the equivalence between the two norms $|\varepsilon(\mathbf{v})|_{0}$ and $ |\mathbf{v}|_{1}$ on $V$.
\end{proof}

In view of the continuity conditions, (\ref{a-continuity})-(\ref{F-continuity}), and the stability conditions, (A1)-(A2), we immediately get the well-posedness results:
\begin{theorem}
Assume that $\mathbf{f}\in V',\ \mathbf{g}\in H^{-1/2}(\Gamma_{N})$. Then the   weak problem  (\ref{weak1.a})-(\ref{weak1.b}) admits a unique solution $(\boldsymbol{\sigma},\mathbf{u})\in \Sigma\times V$ such that
$$||\boldsymbol{\sigma}||_{0}+|\mathbf{u}|_{1}\lesssim ||\mathbf{f}||_{-1}+||\mathbf{g}||_{-\frac{1}{2},\Gamma_{N}}. $$
\end{theorem}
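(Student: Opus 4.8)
The plan is to recognize \eqref{weak1.a}--\eqref{weak1.b} as an abstract two-field saddle-point problem and to apply the classical Brezzi theory \cite{Brezzi1974,Brezzi-Fortin1991}, while tracking carefully that every constant entering the argument is independent of $\lambda$.

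First I would introduce the coupling form $b(\boldsymbol{\tau},\mathbf{v}):=\int_{\Omega}\boldsymbol{\tau}:\varepsilon(\mathbf{v})\,d\mathbf{x}$, so that the weak problem reads: find $(\boldsymbol{\sigma},\mathbf{u})\in\Sigma\times V$ with $a(\boldsymbol{\sigma},\boldsymbol{\tau})-b(\boldsymbol{\tau},\mathbf{u})=0$ for all $\boldsymbol{\tau}\in\Sigma$ and $b(\boldsymbol{\sigma},\mathbf{v})=F(\mathbf{v})$ for all $\mathbf{v}\in V$. This is exactly the format to which Brezzi's theorem applies, with $a$ the form on the stress space $\Sigma$, $b$ the coupling form between $\Sigma$ and $V$, and $F$ the single non-zero right-hand-side functional (the first equation carries zero data).

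Next I would collect the four structural hypotheses and invoke the abstract result. Continuity of $a$ and of $b$ is \eqref{a-continuity} and \eqref{b-continuity}, and continuity of $F$ on $V$ is \eqref{F-continuity}. Kernel-coercivity of $a$ on the kernel $Z$ is precisely (A1), and the inf-sup stability of $b$ is precisely (A2); both are already established, uniformly in $\lambda$, in Theorem 2.1. Brezzi's theorem then produces a unique pair $(\boldsymbol{\sigma},\mathbf{u})\in\Sigma\times V$ together with the a priori bound $\|\boldsymbol{\sigma}\|_{0}+|\mathbf{u}|_{1}\lesssim\|F\|_{V'}$, where only the norm of $F$ appears on the right because the first equation has vanishing data. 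Finally $\|F\|_{V'}\lesssim\|\mathbf{f}\|_{-1}+\|\mathbf{g}\|_{-1/2,\Gamma_{N}}$ by \eqref{F-continuity}, which delivers the stated estimate.

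The only point requiring genuine care -- and the reason the estimate is uniform -- is that each of the four constants above is independent of $\lambda$. The coercivity constant in (A1) and the inf-sup constant in (A2) are $\lambda$-free by Theorem 2.1; the continuity constants of $b$ and of $F$ involve no material parameters; and the continuity constant of $a$ stays bounded as $\lambda\to\infty$ because in $a(\boldsymbol{\sigma},\boldsymbol{\tau})=\int_{\Omega}\bigl(\tfrac{1}{2\mu}\boldsymbol{\sigma}^{D}:\boldsymbol{\tau}^{D}+\tfrac{1}{4(\mu+\lambda)}tr\boldsymbol{\sigma}\,tr\boldsymbol{\tau}\bigr)\,d\mathbf{x}$ the weight $\tfrac{1}{4(\mu+\lambda)}$ of the trace term only decreases with $\lambda$. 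Feeding these $\lambda$-uniform constants into Brezzi's abstract estimate yields the asserted $\lambda$-independent stability, which completes the proof.
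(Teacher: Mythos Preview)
Your proposal is correct and follows exactly the approach the paper takes: the paper simply states that the result follows ``immediately'' from the continuity conditions \eqref{a-continuity}--\eqref{F-continuity} together with the stability conditions (A1)--(A2) established in Theorem~2.1, which is precisely the Brezzi theory you invoke. Your write-up is in fact a careful unpacking of what the paper leaves implicit, including the useful observation about why the continuity constant of $a$ remains bounded as $\lambda\to\infty$.
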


\setcounter{remark}{0}\setcounter{lemma}{0} \setcounter{theorem}{0}
\setcounter{section}{3} \setcounter{equation}{0}
\section*{3. \ Finite element formulations for hybrid methods}

\subsection *{3.1 \ Geometric properties of quadrilaterals}
In what follows we assume that $\Omega$ is a convex polygonal domain. Let $T_{h}$ be a conventional quadrilateral mesh  of ${\Omega}$.
We denote by $h_{K}$  the diameter of a quadrilateral $K\in T_{h}$, and denote  $h:=\max_{K\in T_{h}}h_{K}$.  Let $Z_{i}(x_{i},y_{i})$, $1\leq i\leq 4$ be
 the four vertices of $K$, and  $T_{i}$ denotes the sub-triangle
 of $K$ with vertices $Z_{i-1}$, $Z_{i}$ and $Z_{i+1}$ 
 (the index on $Z_{i}$ is modulo 4). Define
\begin{equation*}
 \rho_{K}=\min\limits_{1\leq i\leq4}\ \mathrm{diameter\ of\ circle\ inscribed\
 in}\ T_{i}.
\end{equation*}
Throughout the paper, we assume that the partition $T_{h}$ satisfies the
following "shape-regularity" hypothesis: There exist a constant $\varrho>2$ independent
of $h$ such that for all $K\in T_{h},$
\begin{equation}\label{partition condition}
 h_{K}\leq \varrho \rho_{K}.
\end{equation} 

\begin{remark}
As pointed out in \cite{Zhang1997}, this shape regularity condition is equivalent to the following one which has been widely used
in  literature (e.g. \cite{Ciarlet1978}): there exist two constants $\varrho'>2$ and $0<\gamma<1$ independent of $h$
such that for all $K\in T_{h}$,
\begin{equation*}
 h_{K}\leq \varrho'\rho_{K}',\ \ | \cos\theta_{K}^{i}|\leq \gamma \  \ \ \mbox{ for } \ \ 1\leq i\leq 4.
 \end{equation*}
Here $\rho_{K}'$ and $\theta_{K}^{i}$
denote the maximum diameter of all circles contained in $K$ and the
angles associated with vertices of $K$.
\end{remark}

 Let $\hat{K}=[-1,1]\times[-1,1]$ be the reference square with
 vertices $\hat{Z}_{i}$, $1\leq i\leq 4$. Then exists a unique
 invertible mapping $F_{K}$ that maps $\hat{K}$ onto
 $K$ with $F_{K}(\xi,\eta)\in Q_{1}^{2}(\xi,\eta)$ and $F_{K}(\hat{Z}_{i})=Z_{i}$, $1\leq i\leq 4$ (Figure 1). Here $\xi, \eta\in [-1,1]$ are the local isoparametric coordinates.

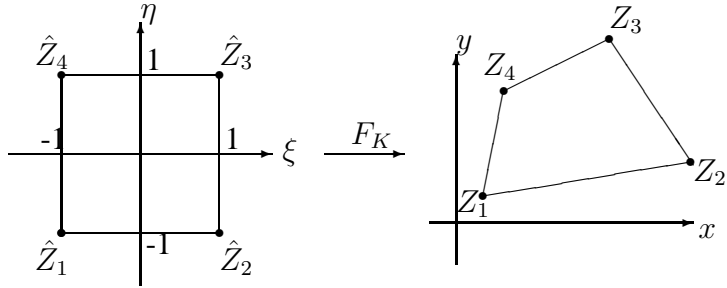
\begin{figure}[!h]
\begin{center}
\setlength{\unitlength}{0.7cm}
\begin{picture}(10,6)
\put(0,1.5){\line(1,0){3}}        \put(3,1.5){\line(0,1){3}}
\put(0,1.5){\line(0,1){3}} \put(0,4.5){\line(1,0){3}}
\put(0,1.5){\circle*{0.15}}      \put(0,4.5){\circle*{0.15}}
\put(3,4.5){\circle*{0.15}}       \put(0,1.5){\circle*{0.15}}
\put(3,1.5){\circle*{0.15}} \put(-0.5,0.8){\bf{$\hat{Z}_{1}$}}
\put(3,0.8){\bf{$\hat{Z}_{2}$}}    \put(3,4.7){\bf{$\hat{Z}_{3}$}}
\put(-0.5,4.7){\bf{$\hat{Z}_{4}$}}  \put(-1,3){\vector(1,0){5}}
\put(1.5,0.5){\vector(0,1){5}} \put(4.2,2.9){$\xi$}
\put(1.5,5.6){$\eta$}

\put(1.6,1.1){-1}                 \put(1.6,4.6){1}
\put(-0.4,3.1){-1} \put(3.1,3.1){1}

\put(5,3){\vector(1,0){1.5}}      \put(5.5,3.2){$F_K$}

\put(8,2.2){\line(6,1){4}}        \put(8,2.2){\line(1,5){0.4}}
\put(8.4,4.2){\line(2,1){2}} \put(10.4,5.2){\line(2,-3){1.6}}
\put(8,2.2){\circle*{0.15}}      \put(8.4,4.2){\circle*{0.15}}
\put(10.4,5.2){\circle*{0.15}}    \put(11.95,2.85){\circle*{0.15}}
\put(7.5,1.9){\bf{$Z_{1}$}} \put(12,2.5){\bf{$Z_{2}$}}
\put(10.4,5.4){\bf{$Z_{3}$}}       \put(8.0,4.5){\bf{$Z_{4}$}}
\put(7,1.7){\vector(1,0){5}}      \put(7.5,0.9){\vector(0,1){4}}
\put(12.1,1.4){$x$} \put(7.5,5.0){$y$}

\end{picture}
\end{center}
\vspace{-1cm} \caption{The mapping $F_{K}$}
\end{figure}

 This  isoparametric bilinear mapping $(x,y)=F_{K}(\xi,\eta)$  is given by
\begin{equation}\label{relation1}
 x=\sum_{i=1}^{4}x_{i}N_{i}(\xi,\eta),\,\,\,
 y=\sum_{i=1}^{4}y_{i}N_{i}(\xi,\eta),
\end{equation}
 where
 $$N_{1}=\frac{1}{4}(1-\xi)(1-\eta),\,
   N_{2}=\frac{1}{4}(1+\xi)(1-\eta),\,
   N_{3}=\frac{1}{4}(1+\xi)(1+\eta),\,
   N_{4}=\frac{1}{4}(1-\xi)(1+\eta).$$
 We can rewrite (\ref{relation1}) as
\begin{equation}\label{relation2}
 x=a_{0}+a_{1}\xi+a_{2}\eta+a_{12}\xi\eta,\,\,\,
 y=b_{0}+b_{1}\xi+b_{2}\eta+b_{12}\xi\eta,
\end{equation}
where
\begin{equation*}
\left(\begin{array}{cc}
 a_{0} &b_{0}\\
 a_{1} &b_{1}\\
 a_{2} &b_{2}\\
 a_{12} &b_{12}
\end{array}\right)=\frac{1}{4}
\left(\begin{array}{cccc}
  1 &1  &1 &1\\
 -1 &1  &1 &-1\\
 -1 &-1 &1 &1\\
  1 &-1 &1 &-1\\
\end{array}\right)
\left(\begin{array}{cc}
 x_{1} &y_{1}\\
 x_{2} &y_{2}\\
 x_{3} &y_{3}\\
 x_{4} &y_{4}\\
\end{array}\right).
\end{equation*}
\begin{remark} Due to the choice of  node order (Figure 1), we always have 
$a_{1}>0, b_{2}>0.$
\begin{remark} Notice that when $K$ is a parallelogram,  we have $a_{12}=b_{12}=0$, and  $F_{K}$ is reduced to an affine mapping.
 \end{remark}

\end{remark}
 Then the Jacobi matrix of the transformation $F_{K}$ is
 \begin{equation*}
 DF_{K}(\xi,\eta)=
 \left(\begin{array}{cc}
  \frac{\partial x}{\partial \xi} &\frac{\partial x}{\partial \eta}\\
  \frac{\partial y}{\partial \xi} &\frac{\partial y}{\partial \eta}\\
 \end{array}\right)=
 \left(\begin{array}{cc}
  a_{1}+a_{12}\eta &a_{2}+a_{12}\xi\\
  b_{1}+b_{12}\eta &b_{2}+b_{12}\xi\\
 \end{array}\right),
 \end{equation*}
 and the Jacobian of $F_{K}$ is
 \begin{equation*}
  J_{K}(\xi,\eta)=det(DF_{K})=J_{0}+J_{1}\xi+J_{2}\eta,
 \end{equation*}
 where
 \begin{equation*}
  J_{0}=a_{1}b_{2}-a_{2}b_{1},\,J_{1}=a_{1}b_{12}-a_{12}b_{1},\,J_{2}=a_{12}b_{2}-a_{2}b_{12}.
 \end{equation*}
 Denote by $F_{K}^{-1}$ the inverse of $F_{K}$, then we obtain
 \begin{eqnarray*}
 \left(\begin{array}{cc}
  \frac{\partial \xi}{\partial x} &\frac{\partial \xi}{\partial y}\\
  \frac{\partial \eta}{\partial x} &\frac{\partial \eta}{\partial y}\\
 \end{array}\right)
 &= &DF_{K}^{-1}\circ F_{K}(\xi,\eta)=(DF_{K})^{-1}\\
 &= &\frac{1}{J_{K}(\xi,\eta)}
 \left(\begin{array}{cc}
  b_{2}+b_{12}\xi &-a_{2}-a_{12}\xi\\
  -b_{1}-b_{12}\eta &a_{1}+a_{12}\eta\\
 \end{array}\right).
 \end{eqnarray*}

It holds the following element geometric properties:

 \begin{lemma} \label{zhang} (\cite{Zhang1997}) \  For any $K\in T_{h}$, under the
 hypothesis (\ref{partition condition}), we have
 \begin{equation}\label{partition satisfy1}
 \frac{\max\limits_{(\xi,\eta)\in \hat{K}} J_{K}(\xi,\eta)}{\min\limits_{(\xi,\eta)\in \hat{K}} J_{K}(\xi,\eta)}
 <\frac{h_{K}^{2}}{2\rho_{K}^{2}}\leq \frac{\varrho^{2}}{2},
 \end{equation}
   \begin{equation}\label{a1b1-a2b2}
 \frac{1}{4}\rho_{K}^{2}<a_{1}^{2}+b_{1}^{2}<\frac{1}{4}h_{K}^{2},
\ \ 
 \frac{1}{4}\rho_{K}^{2}<a_{2}^{2}+b_{2}^{2}<\frac{1}{4}h_{K}^{2}, \ \
 a_{12}^{2}+b_{12}^{2}<\frac{1}{16}h_{K}^{2}.
 \end{equation}
\end{lemma}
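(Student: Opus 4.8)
My plan is to translate the algebraic coefficients into elementary geometric data of $K$ and then argue purely geometrically. Reading off the vertex values $Z_i=F_K(\hat Z_i)$ from the expansion (\ref{relation2}), one obtains
\[ (a_1,b_1) = \tfrac14\big[(Z_2 - Z_1) + (Z_3 - Z_4)\big], \quad (a_2,b_2) = \tfrac14\big[(Z_4 - Z_1) + (Z_3 - Z_2)\big], \]
\[ (a_{12},b_{12}) = \tfrac14\big(Z_1 - Z_2 + Z_3 - Z_4\big), \]
so that $2(a_1,b_1)$ and $2(a_2,b_2)$ are exactly the connectors of the midpoints of the two pairs of opposite edges, while $4(a_{12},b_{12})$ is the defect from a parallelogram (the gap between the two diagonal midpoints). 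Since at a corner $\hat Z_i$ the columns $\partial_\xi F_K,\partial_\eta F_K$ of $DF_K$ equal one half of the two edges of $K$ meeting at $Z_i$, the affine function $J_K=J_0+J_1\xi+J_2\eta$ takes at $\hat Z_i$ the value $\tfrac12|T_i|$, half the area $|T_i|$ of the sub-triangle $T_i$. These identities are the backbone of everything below, and the hypothesis (\ref{partition condition}) is what keeps every $T_i$ non-degenerate so that the incircle and convexity arguments apply.

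\textbf{Estimate (\ref{partition satisfy1}).} As $J_K$ is affine, its extrema over $\hat K$ are attained at corners, so
\[ \frac{\max_{\hat K} J_K}{\min_{\hat K} J_K} = \frac{\max_i |T_i|}{\min_i |T_i|}. \]
For the numerator I would use that each $T_i$ has two edges of length at most $h_K$, giving $|T_i|\le\tfrac12 h_K^2$. For the denominator, the incircle of $T_i$ has diameter at least $\rho_K$, and the sharp triangle inequality $|T_i|\ge 3\sqrt3\,r_i^2$ (inradius $r_i$, equality for the equilateral triangle) yields $|T_i|\ge\tfrac{3\sqrt3}{4}\rho_K^2>\rho_K^2$. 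Dividing gives the ratio $<h_K^2/(2\rho_K^2)$, and the second inequality of (\ref{partition satisfy1}) is immediate from (\ref{partition condition}).

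\textbf{Estimate (\ref{a1b1-a2b2}).} The upper bounds follow directly from the dictionary: $|(a_1,b_1)|\le\tfrac14(|Z_2-Z_1|+|Z_3-Z_4|)\le\tfrac12 h_K$ and likewise for $(a_2,b_2)$, giving $a_1^2+b_1^2<\tfrac14 h_K^2$ and $a_2^2+b_2^2<\tfrac14 h_K^2$. For the lower bound I would write $a_1^2+b_1^2=\tfrac14|M_{23}-M_{41}|^2$ with $M_{23},M_{41}$ the midpoints of $Z_2Z_3$ and $Z_4Z_1$, and bound $|M_{23}-M_{41}|$ below by the distance from $M_{41}$ to the line through $Z_2Z_3$. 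Convexity places $Z_1,Z_4$ on the same side of that line, so this distance is the average of the distances of $Z_1$ and $Z_4$ to it; these are the altitudes onto the common side $Z_2Z_3$ in $T_2$ and $T_3$, each exceeding that sub-triangle's incircle diameter and hence exceeding $\rho_K$. Thus $|M_{23}-M_{41}|>\rho_K$, giving $a_1^2+b_1^2>\tfrac14\rho_K^2$; the bound for $a_2^2+b_2^2$ is symmetric.

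\textbf{The defect term and the main obstacle.} The estimate $a_{12}^2+b_{12}^2<\tfrac1{16}h_K^2$ is equivalent to $|Z_1-Z_2+Z_3-Z_4|<h_K$, and here convexity must be used in an essential way, since the crude triangle inequality only yields the factor $2h_K$; I expect this to be the hardest point. The clean route is to test $w:=Z_1-Z_2+Z_3-Z_4$ against an arbitrary unit direction $e$ and exploit that, for a convex quadrilateral, the projections $Z_i\cdot e$ are cyclically unimodal (one maximum and one minimum around the boundary). Unimodality forbids the alternating pattern and forces $(Z_1+Z_3)\cdot e-(Z_2+Z_4)\cdot e\le\max_i Z_i\cdot e-\min_i Z_i\cdot e\le h_K$; choosing $e=w/|w|$ gives $|w|\le h_K$, strictly for non-degenerate $K$. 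Dividing by $16$ completes the proof.
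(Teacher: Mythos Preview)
The paper does not prove this lemma at all; it is quoted verbatim from Zhang~\cite{Zhang1997}, so there is no ``paper's own proof'' to compare against. Your argument is a genuine, self-contained geometric proof, and it is essentially correct.

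A few minor points. In your geometric dictionary, the diagonal-midpoint gap is $2(a_{12},b_{12})$, not $4(a_{12},b_{12})$ (cf.\ the definition $d_K=2\sqrt{a_{12}^2+b_{12}^2}$ in the paper's Condition~(A)); this slip is harmless since you never use that factor. For the strict upper bounds $a_j^2+b_j^2<\tfrac14 h_K^2$ you only write down the triangle inequality, which gives $\le$; strictness follows because equality would force $K$ to be a parallelogram with a side of length $h_K$, and then the parallelogram law makes a diagonal exceed $h_K$. Your incircle bound $|T_i|\ge 3\sqrt3\,r_i^2$ is equivalent to the classical $s\ge 3\sqrt3\,r$ and is correct; so is the altitude-versus-incircle-diameter step (the horizontal tangent to the incircle at height $2r$ meets the interior of the triangle, hence $2r$ is strictly below the apex).

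Your treatment of the defect term is the most interesting part and is sound: cyclic unimodality of linear functionals on a convex polygon gives $|w\cdot e|\le\max_i Z_i\cdot e-\min_i Z_i\cdot e\le h_K$, and chasing the equality cases shows that $|w|=h_K$ forces either three vertices collinear or two vertices coincident, both excluded for non-degenerate $K$. You might spell out that last equality analysis explicitly, since it is where the strict inequality actually lives.
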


  In view of the choice of  node order (cf. Figure 1), the shape-regular hypothesis (\ref{partition condition}) and the relations (\ref{a1b1-a2b2}), without loss of generality we assume 
 \begin{equation}\label{aibi-assumption}
 |b_{1}|\leq a_{1},\ \ |a_{2}|\lesssim b_{2}. 
\end{equation}
Together with (\ref{a1b1-a2b2}), this leads to
\begin{equation}\label{a1b2-hK}
a_{1}\approx b_{2}\approx h_{K},\ \ \max\{a_{2},b_{1}\}\lesssim O( h_{K}).
\end{equation}
Notice also that  Lemma \ref{zhang}   shows
\begin{equation}\label{JK-hK}
J_{K}\approx J_0\approx h_{K}^{2}.
\end{equation}

 \subsection *{3.2 \ Hybrid methods PS   and ECQ4}

This subsection is devoted to the finite element formulations of the 4-node assumed stress hybrid quadrilateral elements PS \cite{Pian-Sumihara1984} and ECQ4 \cite{Xie-Zhou2004}.

 Let $\Sigma_{h}\subset\Sigma$ and $V_{h}\subset V$ be finite
 dimensional spaces respectively for stress and displacement approximations,
 then the corresponding finite element scheme for the problem
 (\ref{weak1.a})(\ref{weak1.b}) reads as:
 Find $(\boldsymbol{\sigma}_{h},\mathbf{u}_{h})\in \Sigma_{h}\times V_{h}$, such that
 \begin{equation}\label{discreteweak1.a}
 a(\boldsymbol{\sigma}_{h},\boldsymbol{\tau})-\int_{\Omega}\boldsymbol{\tau}:\varepsilon(\mathbf{u}_{h})d\mathbf{x}=0 \ \ \,\mbox{for all }\boldsymbol{\tau}\in
\Sigma_{h},
\end{equation}
\begin{equation}\label{discreteweak1.b}
 \int_{\Omega}\boldsymbol{\sigma}_{h}:\varepsilon(\mathbf{v})d\mathbf{x}
 =F(\mathbf{v})\,\hskip1cm
\ \  \mbox{for all } \mathbf{v}\in V_{h}.
 \end{equation}

 For elements PS and ECQ4, the isoparametric bilinear interpolation is used for the displacement approximation, i.e. the displacement space $V_{h}$ is   chosen as
 \begin{equation}\label{displacement}
 V_{h}=\{\mathbf{v}\in V: \hat{\mathbf{v}}=\mathbf{v}|_{K}\circ
 F_{K}\in Q_{1}(\hat{K})^{2}\ \ \mbox{for all } K \in T_{h}\}.
 \end{equation}
 In other words,  for $\mathbf{v}=(u,v)^{T}\in V_{h}$ with nodal values $\mathbf{v}(Z_{i})=(u_{i},v_{i})^{T}$ on $K$,   
\begin{equation}
\label{displacementform}
\hat{\mathbf{v}}=\sum_{i=1}^{4}\left(\begin{array}{l}u_{i}\\v_{i}
\end{array}\right)N_{i}(\xi,\eta)=\left(\begin{array}{l}U_{0}+U_{1}\xi+U_{2}\eta+U_{12}\xi\eta\\V_{0}+V_{1}\xi+V_{2}\eta+V_{12}\xi\eta
\end{array}\right),
 \end{equation}
  where
 \begin{equation*}
\left(\begin{array}{cc}
 U_{0} &V_{0}\\
 U_{1} &V_{1}\\
 U_{2} &V_{2}\\
 U_{12}&V_{12}
\end{array}\right)=\frac{1}{4}
\left(\begin{array}{cccc}
  1 &1  &1 &1\\
 -1 &1  &1 &-1\\
 -1 &-1 &1 &1\\
  1 &-1 &1 &-1\\
\end{array}\right)
\left(\begin{array}{cc}
 u_{1} &v_{1}\\
 u_{2} &v_{2}\\
 u_{3} &v_{3}\\
 u_{4} &v_{4}\\
\end{array}\right).
\end{equation*}

 We denote the symmetric stress tensor $\boldsymbol{\tau}:=
 \left(
 \begin{array}{cc}
  \boldsymbol{\tau}_{11} &\boldsymbol{\tau}_{12}\\
  \boldsymbol{\tau}_{12} &\boldsymbol{\tau}_{22}\\
 \end{array}
 \right)$.  For convenience we abbreviate it to $\boldsymbol{\tau}=(\boldsymbol{\tau}_{11},\boldsymbol{\tau}_{22},\boldsymbol{\tau}_{12})^{T}.$ In  \cite{Pian-Sumihara1984}, the  5-parameters stress mode on $\hat{K}$ for the PS finite element takes the form
\begin{equation}\label{PSstress1}
\hat{\boldsymbol{\tau}}=
\left(\begin{array}{c}
\hat{\boldsymbol{\tau}}_{11}\\
\hat{\boldsymbol{\tau}}_{22}\\
\hat{\boldsymbol{\tau}}_{12}\\
\end{array}\right) =
\left(\begin{array}{ccccc}
1 &0 &0 &\eta &\frac{a_{2}^{2}}{b_{2}^{2}}\xi\\
0 &1 &0 &\frac{b_{1}^{2}}{a_{1}^{2}}\eta &\xi\\
0 &0 &1 &\frac{b_{1}}{a_{1}}\eta &\frac{a_{2}}{b_{2}}\xi\\
\end{array}\right)
 \beta^{\tau}\ \
 \mbox{for }\beta^{\tau}:=(\beta_{1}^\tau,\cdots,\beta_{5}^\tau)^{T}\in \mathbb{R}^{5}.
\end{equation}
 Then the corresponding stress space for the PS finite element is
 $$ \Sigma_{h}^{PS}:=\left\{\boldsymbol{\tau}\in \Sigma: \ \hat{\boldsymbol{\tau}}=\boldsymbol{\tau}|_{K}\circ F_{K}\mbox{ is of form }(\ref{PSstress1})\ \ \mbox{for all } K\in T_{h}\right\}.$$
In  \cite{Xie-Zhou2004},  the  5-parameters stress mode on $\hat{K}$ for element  ECQ4 has the form
\begin{equation}\label{ECQ4stress1}
\hat{\boldsymbol{\tau}}=
\left(\begin{array}{c}
\hat{\boldsymbol{\tau}}_{11}\\
\hat{\boldsymbol{\tau}}_{22}\\
\hat{\boldsymbol{\tau}}_{12}\\
\end{array}\right) =
\left(\begin{array}{ccccc}
1-\frac{b_{12}}{b_{2}}\xi &\frac{a_{12}a_{2}}{b_{2}^{2}}\xi &\frac{a_{12}b_{2}-a_{2}b_{12}}{b_{2}^{2}}\xi &\eta &\frac{a_{2}^{2}}{b_{2}^{2}}\xi\\
\frac{b_{1}b_{12}}{a_{1}^{2}}\eta &1-\frac{a_{12}}{a_{1}}\eta &\frac{a_{1}b_{12}-a_{12}b_{1}}{a_{1}^{2}}\eta &\frac{b_{1}^{2}}{a_{1}^{2}}\eta &\xi\\
\frac{b_{12}}{a_{1}}\eta &\frac{a_{12}}{b_{2}}\xi &1-\frac{b_{12}}{b_{2}}\xi-\frac{a_{12}}{a_{1}}\eta &\frac{b_{1}}{a_{1}}\eta &\frac{a_{2}}{b_{2}}\xi\\
\end{array}\right)
 \beta^{\tau}\ \
\mbox{for }\beta^{\tau}\in \mathbb{R}^{5}.
\end{equation}
  Then the corresponding stress space for the ECQ4 finite element is
 $$ \Sigma_{h}^{EC}:=\left\{\boldsymbol{\tau}\in \Sigma: \ \hat{\boldsymbol{\tau}}=\boldsymbol{\tau}|_{K}\circ F_{K}\mbox{ is of form }(\ref{ECQ4stress1})\ \mbox{ for all } K\in T_{h}\right\}.$$

\begin{remark}\label{stress-osc} The stress mode of ECQ4 can be viewed as a modified version of PS mode with a
 perturbation term:
 \begin{eqnarray*}
 &&\left(\begin{array}{ccccc}
  1-\frac{b_{12}}{b_{2}}\xi &\frac{a_{12}a_{2}}{b_{2}^{2}}\xi &\frac{a_{12}b_{2}-a_{2}b_{12}}{b_{2}^{2}}\xi &\eta &\frac{a_{2}^{2}}{b_{2}^{2}}\xi\\
  \frac{b_{1}b_{12}}{a_{1}^{2}}\eta &1-\frac{a_{12}}{a_{1}}\eta &\frac{a_{1}b_{12}-a_{12}b_{1}}{a_{1}^{2}}\eta &\frac{b_{1}^{2}}{a_{1}^{2}}\eta &\xi\\
  \frac{b_{12}}{a_{1}}\eta &\frac{a_{12}}{b_{2}}\xi &1-\frac{b_{12}}{b_{2}}\xi-\frac{a_{12}}{a_{1}}\eta &\frac{b_{1}}{a_{1}}\eta &\frac{a_{2}}{b_{2}}\xi\\
 \end{array}\right)\\
 &=&
 \left(\begin{array}{ccccc}
  1 &0 &0 &\eta &\frac{a_{2}^{2}}{b_{2}^{2}}\xi\\
  0 &1 &0 &\frac{b_{1}^{2}}{a_{1}^{2}}\eta &\xi\\
  0 &0 &1 &\frac{b_{1}}{a_{1}}\eta &\frac{a_{2}}{b_{2}}\xi\\
 \end{array}\right)
 +
 \left(\begin{array}{ccccc}
  -\frac{b_{12}}{b_{2}}\xi &\frac{a_{12}a_{2}}{b_{2}^{2}}\xi &\frac{a_{12}b_{2}-a_{2}b_{12}}{b_{2}^{2}}\xi &0 &0\\
  \frac{b_{1}b_{12}}{a_{1}^{2}}\eta &-\frac{a_{12}}{a_{1}}\eta &\frac{a_{1}b_{12}-a_{12}b_{1}}{a_{1}^{2}}\eta &0 &0\\
  \frac{b_{12}}{a_{1}}\eta &\frac{a_{12}}{b_{2}}\xi &-\frac{b_{12}}{b_{2}}\xi-\frac{a_{12}}{a_{1}}\eta &0 &0\\
 \end{array}\right).
 \end{eqnarray*}
\end{remark}
\begin{remark} When $K\in T_{h}$ is a parallelogram, the stress mode of ECQ4 is reduced to that of PS due to $a_{12}=b_{12}=0$. Thus,  PS and ECQ4 are equivalent on parallelogram meshes.
\end{remark}

 Define the bubble function space
 \begin{equation}\label{bubbledisplacement}
  B_{h}:=\{\mathbf{v}^{b}\in (L^{2}(\Omega))^{2}:
          \hat{\mathbf{v}}^{b}(\xi,\eta)=\mathbf{v}^{b}|_{K}\circ F_{K}\in span\{\xi^{2}-1,\eta^{2}-1\}^{2}\ \ \mbox{for all } K\in
          T_{h}\}.
 \end{equation}
Then for any $\mathbf{v}^{b}\in B_{h}$,  we have
\begin{equation}
\label{vB-form}
 \hat{\mathbf{v}}^{b}=\mathbf{v}^{b}\circ
 F_{K}
 =\left(\begin{array}{c}
\frac{u_{\xi}}{2}(\xi^{2}-1)+\frac{u_{\eta}}{2}(\eta^{2}-1)\\
\frac{v_{\xi}}{2}(\xi^{2}-1)+\frac{v_{\eta}}{2}(\eta^{2}-1)\\
\end{array}\right)
 \end{equation}
 with $u_{\xi},u_{\eta},v_{\xi},v_{\eta}\in R$.
 \begin{remark} It is easy to know (see \cite{Shi1984}) that 
            for any $K\in T_{h}$,
             $|u_{\xi}|+|u_{\eta}|+|v_{\xi}|+|v_{\eta}|\lesssim
             |\mathbf{v}^{b}|_{1,K}$.
\end{remark}

 Define the modified partial derivatives $\frac{\tilde{\partial} v}{\partial x}$, $\frac{\tilde{\partial} v}{\partial y}$, the modified divergence  $\tilde{\mbox{div}}\mathbf{v}$ and the modified strain  $ \tilde{\varepsilon} (\mathbf{v})$ respectively as follows \cite{Zhang1997}: for $K\in T_{h},$
 \begin{equation*}
 (J_{K}\frac{\tilde{\partial} v}{\partial x}|_{K}\circ F_{K})(\xi,\eta)
 =\frac{\partial y}{\partial \eta}(0,0)\frac{\partial \hat{v}}{\partial \xi}
 -\frac{\partial y}{\partial \xi}(0,0)\frac{\partial \hat{v}}{\partial \eta}
 =b_{2}\frac{\partial \hat{v}}{\partial \xi}
 -b_{1}\frac{\partial \hat{v}}{\partial \eta},
 \end{equation*}
\begin{equation*}
 (J_{K}\frac{\tilde{\partial} v}{\partial y}|_{K}\circ F_{K})(\xi,\eta)
 =-\frac{\partial x}{\partial \eta}(0,0)\frac{\partial \hat{v}}{\partial \xi}
 +\frac{\partial x}{\partial \xi}(0,0)\frac{\partial \hat{v}}{\partial \eta}
 =-a_{2}\frac{\partial \hat{v}}{\partial \xi}
 +a_{1}\frac{\partial \hat{v}}{\partial \eta},
 \end{equation*}
 \begin{equation*}
 \tilde{\mbox{div}} \mathbf{v}|_{K}
 =\frac{\tilde{\partial} u}{\partial x}
 +\frac{\tilde{\partial} v}{\partial y},
 \end{equation*}
\begin{equation*}
 \tilde{\varepsilon} (\mathbf{v})|_{K}
 =\left(\begin{array}{cc}
 \frac{\tilde{\partial} u}{\partial x}
 &\frac{1}{2}(\frac{\tilde{\partial} u}{\partial y}
 +\frac{\tilde{\partial} v}{\partial x})\\
 \frac{1}{2}(\frac{\tilde{\partial} u}{\partial y}
 +\frac{\tilde{\partial} v}{\partial x})
 &\frac{\tilde{\partial} v}{\partial y}
\end{array}\right).
 \end{equation*}

 It is easy to verify that the PS stress mode satisfies the relation (see \cite{Piltner2000})
 \begin{equation}
 \label{PSconstraint}
 \int_{K}\boldsymbol{\tau}:\tilde{\varepsilon}(\mathbf{v}^{b})d\mathbf{x}=0\ \ \,\mbox{for all }
 \mathbf{v}^{b}\in B_{h}, 
 \end{equation}
 or equivalently
  \begin{equation*}
  \int_{K}(\boldsymbol{\tau}-\boldsymbol{\tau}_{0}):\varepsilon(\mathbf{v}^{b})d\mathbf{x}=0\ \ \,\mbox{for all }
 \mathbf{v}^{b}\in B_{h}
  \end{equation*}
for all $ \boldsymbol{\tau}\in \Sigma_{h}^{PS}$, with $\boldsymbol{\tau}_{0}$  the constant part of $\boldsymbol{\tau}$,
and that the ECQ4 stress mode satisfies  the so-called energy-compatibility condition (see \cite{Xie-Zhou2004,Zhou-Nie2001})
 \begin{equation}
 \label{ECconstraint}
 \int_{K}\boldsymbol{\tau}:\varepsilon(\mathbf{v}^{b})d\mathbf{x}=0\ \ \,\mbox{for all }
 \mathbf{v}^{b}\in B_{h}
 \end{equation}
for all $ \boldsymbol{\tau}\in \Sigma_{h}^{EC}$. As a result, the stress spaces $\Sigma_{h}^{PS}, \Sigma_{h}^{EC}$ can also be rewritten as
 \begin{eqnarray}\label{Sigma-PS}
\Sigma_{h}^{PS}
  &=&\{\boldsymbol{\tau}\in \Sigma:
  \hat{\boldsymbol{\tau}}_{ij}=\boldsymbol{\tau}_{ij}|_{K} \circ F_{K}\in P_{1}(\xi,\eta),
  \int_{K}\boldsymbol{\tau}:\tilde{\varepsilon}(\mathbf{v}^{b})d\mathbf{x}=0,\nonumber\\
 &&
  1\leq i\leq j\leq 2\ \ 
  \mbox{for all } \mathbf{v}^{b}\in B_{h}, \ K\in T_{h}\},\label{PSstress}
 \end{eqnarray}
\begin{eqnarray}\label{Sigma-EC}
\Sigma_{h}^{EC}
 &=&\{\boldsymbol{\tau}\in \Sigma:
 \hat{\boldsymbol{\tau}}_{ij}=\boldsymbol{\tau}_{ij}|_{K} \circ F_{K}\in  P_{1}(\xi,\eta),
 \int_{K}\boldsymbol{\tau}:\varepsilon(\mathbf{v}^{b})d\mathbf{x}=0,\nonumber\\
 && 1\leq i\leq j\leq 2\ \ 
 \mbox{for all } \mathbf{v}^{b}\in B_{h}, \ K\in T_{h}\}. \label{ECQ4stress}
 \end{eqnarray}

With the continuous isoparametric bilinear displacement approximation $V_{h}$ given in (\ref{displacement}),  the corresponding hybrid finite element schemes for  PS and ECQ4 are obtained by respectively taking $\Sigma_{h}=\Sigma_{h}^{PS}$ and  $\Sigma_{h}=\Sigma_{h}^{EC}$ in the discretized model (\ref{discreteweak1.a})(\ref{discreteweak1.b}).

\begin{remark} Since the stress approximation of the hybrid elements is piecewise-independent, the stress parameters,  $\beta^{\tau}$ in (\ref{PSstress1}) or (\ref{ECQ4stress1}), can be eliminated at the element level.  In this sense, the computational cost of the hybrid methods is almost the same as that of the isoparametric bilinear element.
\end{remark}

\subsection*{3.3. \ Numerical performance of hybrid elements}

 Three test problems are used to examine numerical
 performance of the hybrid elements PS/ECQ4.  The former two are benchmark tests widely used in literature, e.g. \cite{Pian-Sumihara1984, Pian-Wu1988,Xie-Zhou2004, Xie2005,Xie-Zhou2008,Zhou-Nie2001}, to test membrane elements while using coarse meshes, where no analytical forms of the exact solutions were given and numerical results were only computed at some special points. Here we give the explicit forms of the exact solutions and compute the stress error in $L^{2}$-norm and the displacement error in $H^{1}$-seminorm.   For comparison, the standard 4-node displacement element, i.e. the isoparametric bilinear element (abbr. bilinear), is also computed with  $5\times5$ Gaussian quadrature. For elements PS and ECQ4,  $2\times2$ Gaussian quadrature is exact in all the problems.

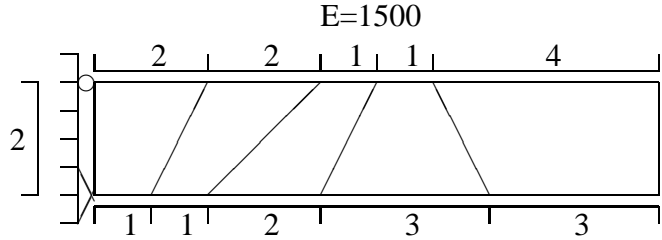
\begin{figure}[!h]
\begin{center}
\setlength{\unitlength}{0.75cm}
\begin{picture}(10,3)
\put(0,0){\line(1,0){10}} \put(0,0){\line(0,1){2}}
\put(0,2){\line(1,0){10}} \put(10,0){\line(0,1){2}}
\put(1,0){\line(1,2){1}} \put(2,0){\line(1,1){2}}
\put(4,0){\line(1,2){1}} \put(7,0){\line(-1,2){1}}

 \put(0,-0.2){\line(1,0){10}}
 \put(0,-0.2){\line(0,-1){0.3}}
 \put(1,-0.2){\line(0,-1){0.3}}
 \put(2,-0.2){\line(0,-1){0.3}}
 \put(4,-0.2){\line(0,-1){0.3}}
 \put(7,-0.2){\line(0,-1){0.3}}
 \put(10,-0.2){\line(0,-1){0.3}}

 \put(0.5,-0.7){1}
 \put(1.5,-0.7){1}
 \put(3,-0.7){2}
 \put(5.5,-0.7){3}
 \put(8.5,-0.7){3}

 \put(0,2.2){\line(1,0){10}}
 \put(0,2.2){\line(0,1){0.3}}
 \put(2,2.2){\line(0,1){0.3}}
 \put(4,2.2){\line(0,1){0.3}}
 \put(5,2.2){\line(0,1){0.3}}
 \put(6,2.2){\line(0,1){0.3}}
 \put(10,2.2){\line(0,1){0.3}}

 \put(1,2.3){2}
 \put(3,2.3){2}
 \put(4.5,2.3){1}
 \put(5.5,2.3){1}
 \put(8,2.3){4}

 \put(-0.3,-0.5){\line(0,1){3}}
 \multiput(-0.3,-0.5)(0,0.5){7}{\line(-1,0){0.3}}

 \put(-1,0){\line(0,1){2}}
 \put(-1,0){\line(-1,0){0.3}}
 \put(-1,2){\line(-1,0){0.3}}

 \put(-1.5,0.8){2}

 \put(-0.3,-0.5){\line(1,2){0.3}}
 \put(-0.3,0.5){\line(1,-2){0.3}}

 \put(-0.15,2){\circle{0.3}}


 \put(4,3){E=1500}%
\end{picture}
\end{center}
\caption{Cantilever beam}
\end{figure}

\begin{figure}[!h]
\begin{center}
\setlength{\unitlength}{0.5cm}
\begin{picture}(22,7)
\put(0,3){\line(1,0){10}} \put(0,3){\line(0,1){2}}
\put(0,5){\line(1,0){10}} \put(10,3){\line(0,1){2}}
\put(2,3){\line(0,1){2}}  \put(4,3){\line(0,1){2}}
\put(6,3){\line(0,1){2}}  \put(8,3){\line(0,1){2}}
\put(4,2.4){$5\times 1$}

\put(0,0){\line(1,0){10}} \put(0,0){\line(0,1){2}}
\put(0,2){\line(1,0){10}} \put(10,0){\line(0,1){2}}
\put(1,0){\line(0,1){2}} \put(2,0){\line(0,1){2}}
\put(3,0){\line(0,1){2}} \put(4,0){\line(0,1){2}}
\put(5,0){\line(0,1){2}} \put(6,0){\line(0,1){2}}
\put(7,0){\line(0,1){2}} \put(8,0){\line(0,1){2}}
\put(9,0){\line(0,1){2}} \put(0,1){\line(1,0){10}}
\put(4,-0.6){$10\times 2$} \put(2,-2){regular meshes}

\put(15,3){\line(1,0){10}} \put(15,3){\line(0,1){2}}
\put(15,5){\line(1,0){10}} \put(25,3){\line(0,1){2}}
\put(16,3){\line(1,2){1}} \put(17,3){\line(1,1){2}}
\put(19,3){\line(1,2){1}} \put(22,3){\line(-1,2){1}}
\put(19,2.4){$5\times 1$}

\put(15,0){\line(1,0){10}} \put(15,0){\line(0,1){2}}
\put(15,2){\line(1,0){10}} \put(25,0){\line(0,1){2}}
\put(16,0){\line(1,2){1}} \put(17,0){\line(1,1){2}}
\put(19,0){\line(1,2){1}} \put(22,0){\line(-1,2){1}}
\put(15.5,0){\line(1,4){0.5}} \put(16.5,0){\line(3,4){1.5}}
\put(18,0){\line(3,4){1.5}}   \put(20.5,0){\line(0,1){2}}
\put(23.5,0){\line(-1,4){0.5}} \put(15,1){\line(1,0){10}}
\put(19,-0.6){$10\times 2$} \put(17,-2){irregular meshes}
\end{picture}
\end{center}
\vspace{0.5cm} \caption{Finite element meshes}
\end{figure}
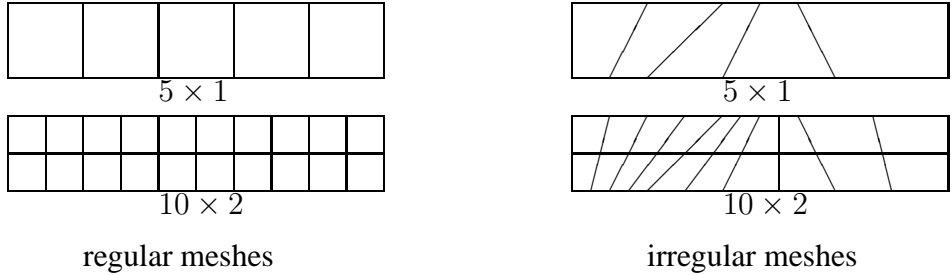

 \noindent
 {\bf Example 1. Beam bending test}

 A plane stress beam   modeled   with  different  meshes is computed (Figure 2 and Figure 3),
where the origin of the coordinates $x,y$ is at the midpoint of the left end, the body force $\mathbf{f}=(0,\ 0)^{T}$, 
 the surface traction $\mathbf{g}$ on $\Gamma_{N}=\{(x,y)\in [0,10]\times[-1,1]:\ x=10\mbox{or } y=\pm 1\}$ is given by 
 $\mathbf{g}|_{x=10}=(-2Ey,\ 0)^{T},$
$\mathbf{g}|_{y=\pm 1}=(0,\ 0)^{T}$,  and the exact solution is
 $$\mathbf{u}=\left(\begin{array}{c}
     -2xy\\
     x^{2}+\nu(y^{2}-1)\\
    \end{array}\right),\ \ \ \boldsymbol{\sigma}=\left(\begin{array}{cc}
     -2Ey& 0\\
      0&0\\
    \end{array}\right).$$
    
  The displacement and stress results, $\frac{|\mathbf{u}-\mathbf{u}_{h}|_{1}}{|\mathbf{u}|_{1}}$ and
 $\frac{\|\boldsymbol{\sigma}-\boldsymbol{\sigma}_{h}\|_{0}}{\|\boldsymbol{\sigma}\|_{0}}$, are listed respectively in Tables 1-2 with  $\nu=0.25$ and $E=1500$. Though of the same first-order convergence rate  in the displacement approximation, the hybrid elements results appear much more accurate when compared with the bilinear element.  Amazingly, the hybrid elements yield quite accurate stress results.

\begin{table}[!h]\renewcommand{\baselinestretch}{1.25}\small
 \centering
 \caption{The results of $\frac{|\mathbf{u}-\mathbf{u}_{h}|_{1}}{|\mathbf{u}|_{1}}$ in the plain stress beam  test}
\begin{tabular}{cccccccccc}
 \hline
 &&regular &mesh &  & &&irregular &mesh &\\
 \cline{2-5}\cline{7-10}
 method  &$5\times1$ &$10\times2$ &$20\times4$ &$40\times8$ & &$5\times1$ &$10\times2$ &$20\times4$ &$40\times8$\\\hline
 bilinear       &0.3256      &0.1106    &0.03376    &0.01165   & &0.5777    &0.2668     &0.09273    &0.02881\\
 PS       &0.07269     &0.03635   &0.01817    &0.009087  & &0.1429     &0.06303    &0.03113     &0.01552\\
 ECQ4     &0.07269     &0.03635   &0.01817    &0.009087  & &0.1313    &0.06256    &0.03107    &0.01551\\\hline
\end{tabular}
\end{table}
\begin{table}[!h]\renewcommand{\baselinestretch}{1.25}\small
 \centering
 \caption{The results of  $\frac{\|\boldsymbol{\sigma}-\boldsymbol{\sigma}_{h}\|_{0}}{\|\boldsymbol{\sigma}\|_{0}}$ in the plain stress beam  test }
\begin{tabular}{cccccccccc}
 \hline
  &&regular &mesh &  & &&irregular &mesh &\\
  \cline{2-5}\cline{7-10}
 method  &$5\times1$ &$10\times2$ &$20\times4$ &$40\times8$ & &$5\times1$ &$10\times2$ &$20\times4$ &$40\times8$\\\hline
biliear       &0.5062      &0.2951   &0.1545   &0.07826  & &0.7242     &0.4854     &0.2809     &0.1481\\
 PS       &0     &0  &0  &0        & &0.2663     &0.05559     &0.01134    &0.002551\\
 ECQ4     &0     &0  &0  &0  & &0.1780    &0.03517     &0.007324   &0.001666\\\hline
\end{tabular}
\end{table}

\noindent
 {\bf Example 2.  Poisson's ratio locking-free test}

 A plane strain pure bending cantilever beam is used to test
 locking-free performance,  with the same domain and meshes as in Figures 2 and 3.  In this case, the body force $\mathbf{f}=(0,\ 0)^{T}$,
  the surface traction $\mathbf{g}$ on $\Gamma_{N}=\{(x,y)\in [0,10]\times[-1,1]:\ x=10\mbox{or } y=\pm 1\}$ is given by 
 $\mathbf{g}|_{x=10}=(-2Ey,\ 0)^{T},$  $\mathbf{g}|_{y=\pm1}=(0,\ 0)^{T},$
 and the exact solution is
 $$\mathbf{u}=\left(\begin{array}{c}
     -2(1-\nu^{2})xy\\
     (1-\nu^{2})x^{2}+\nu(1+\nu)(y^{2}-1)\\
    \end{array}\right), \ \ \ \boldsymbol{\sigma}=\left(\begin{array}{cc}
     -2Ey&0\\
      0&0\\
    \end{array}\right).$$
 
 The numerical results with $E=1500$ and different values of  Poisson ratio $\nu$ are listed in Tables 3-7. As we can see, the bilinear element deteriorates  as $\nu\rightarrow 0.5$ or $\lambda\rightarrow \infty$, whereas the two hybrid elements give uniformly good results, with first order   accuracy  for the displacement approximation in $H^{1}$-seminorm  and second order accuracy for the stress in $L^{2}$-norm.

\begin{table}[!h]\renewcommand{\baselinestretch}{1.25}\small
 \centering
 \caption{The results of $\frac{|\mathbf{u}-\mathbf{u}_{h}|_{1}}{|\mathbf{u}|_{1}}$ for the bilinear element in the plane strain  test}
\begin{tabular}{cccccccccc}
 \hline
  &&regular &mesh &  & &&irregular &mesh &\\\cline{2-5}\cline{7-10}
 $\nu$  &$5\times1$ &$10\times2$ &$20\times4$ &$40\times8$ & &$5\times1$ &$10\times2$ &$20\times4$ &$40\times8$\\\hline
 0.49         &0.9253      &0.7547      &0.4353       &0.1620 & &0.8862     &0.7641      &0.5351     &0.2597\\
 0.499        &0.9921      &0.9690      &0.8866      &0.6619 & &0.9515     &0.9241      &0.8530     &0.6978\\
 0.4999       &0.9992      &0.9968      &0.9874      &0.9514 & &0.9615      &0.9567      &0.9446     &0.9067\\
 0.49999      &0.9999      &0.9997      &0.9987      &0.9949 & &0.9626      &0.9606      &0.9591     &0.9540\\\hline
\end{tabular}
\end{table}

\begin{table}[!h]\renewcommand{\baselinestretch}{1.25}\small
 \centering
 \caption{The results of $\frac{|\mathbf{u}-\mathbf{u}_{h}|_{1}}{|\mathbf{u}|_{1}}$ for PS in the plane strain test}
\begin{tabular}{cccccccccc}
 \hline
  &&regular &mesh &  & &&irregular &mesh &\\\cline{2-5}\cline{7-10}
 $\nu$  &$5\times1$ &$10\times2$ &$20\times4$ &$40\times8$ & &$5\times1$ &$10\times2$ &$20\times4$ &$40\times8$\\\hline
 0.49         &0.09759     &0.04879     &0.02440     &0.01220 & &0.1557     &0.07342     &0.03649     &0.01822\\
 0.499        &0.09931     &0.04965     &0.02483     &0.01241 & &0.1567     &0.07410     &0.03684     &0.01839\\
 0.4999       &0.09948     &0.04974     &0.02487     &0.01244 & &0.1569     &0.07418     &0.03688     &0.01841\\
 0.49999      &0.09950     &0.04975     &0.02488     &0.01244 & &0.1569     &0.07418     &0.03688     &0.01841\\
\hline
\end{tabular}
\end{table}

\begin{table}[!h]\renewcommand{\baselinestretch}{1.25}\small
 \centering
 \caption{The results of $\frac{\|\boldsymbol{\sigma}-\boldsymbol{\sigma}_{h}\|_{0}}{\|\boldsymbol{\sigma}\|_{0}}$ for PS in the plane strain test}
\begin{tabular}{cccccccccc}
 \hline
  &&regular &mesh &  & &&irregular &mesh &\\\cline{2-5}\cline{7-10}
 $\nu$  &$5\times1$ &$10\times2$ &$20\times4$ &$40\times8$ & &$5\times1$ &$10\times2$ &$20\times4$ &$40\times8$\\\hline
 0.49         &0  &0  &0 &0  & &0.2286      &0.04566    &0.009326    &0.002094\\
 0.499        &0  &0  &0  &0  & &0.2268      &0.0452    &0.009238    &0.002073\\
 0.4999       &0  &0  &0  &0  & &0.2266      &0.04516   &0.009229    &0.002071\\
 0.49999      &0  &0  &0  &0  & &0.2266      &0.04516    &0.009229    &0.002071\\
\hline
\end{tabular}
\end{table}

\begin{table}[!h]\renewcommand{\baselinestretch}{1.25}\small
 \centering
 \caption{The results of $\frac{|\mathbf{u}-\mathbf{u}_{h}|_{1}}{|\mathbf{u}|_{1}}$ for ECQ4 in the plane strain  test}
\begin{tabular}{cccccccccc}
 \hline
  &&regular &mesh &  & &&irregular &mesh &\\\cline{2-5}\cline{7-10}
 $\nu$  &$5\times1$ &$10\times2$ &$20\times4$ &$40\times8$ & &$5\times1$ &$10\times2$ &$20\times4$ &$40\times8$\\\hline
 0.49         &0.09759     &0.04879     &0.02440     &0.01220 & &0.1512     &0.07321     &0.03647     &0.01821\\
 0.499        &0.09931     &0.04965     &0.02483     &0.01241 & &0.1526     &0.07392     &0.03682     &0.01839\\
 0.4999       &0.09948     &0.04974     &0.02487     &0.01244 & &0.1527     &0.07399     &0.03686     &0.01841\\
 0.49999      &0.09950     &0.04975     &0.02488     &0.01244 & &0.1569     &0.07418     &0.03688     &0.01841\\
\hline
\end{tabular}
\end{table}

\begin{table}[!h]\renewcommand{\baselinestretch}{1.25}\small
 \centering
 \caption{The results of $\frac{\|\boldsymbol{\sigma}-\boldsymbol{\sigma}_{h}\|_{0}}{\|\boldsymbol{\sigma}\|_{0}}$ for ECQ4 in the plane strain test}
\begin{tabular}{cccccccccc}
 \hline
  &&regular &mesh &  & &&irregular &mesh &\\\cline{2-5}\cline{7-10}
 $\nu$  &$5\times1$ &$10\times2$ &$20\times4$ &$40\times8$ & &$5\times1$ &$10\times2$ &$20\times4$ &$40\times8$\\\hline
 0.49         &0  &0 &0  &0  & &0.1780     &0.03456    &0.007270    &0.001661\\
 0.499        &0  &0 &0  &0  & &0.1780     &0.03455    &0.007274    &0.001662\\
 0.4999       &0  &0 &0  &0  & &0.1780     &0.03455    &0.007275    &0.001662\\
 0.49999      &0  &0 &0  &0  & &0.1780     &0.03455    &0.007275    &0.001662\\
\hline
\end{tabular}
\end{table}

\noindent
 {\bf Example 3.  A new plane stress test}

 In the latter two tests, the hybrid elements give quite accurate numerical results for the stress approximation. This is partially owing  to the fact that the analytical stress solutions are linear polynomials in both cases.  To verify this, we compute a new plane stress test with the same domain and meshes as in Figures 2 and 3. Here
 the body force has the form 
 $\mathbf{f}=-\ (  6y^{2},\  6x^{2})^{T}$,
 the surface traction $\mathbf{g}$ on$\Gamma_{N}=\{(x,y):\ x=10,\ -1\leq y\leq1\}$
is given by
 $\mathbf{g}=\ (0,\  2000+2y^{3})^{T}$, 
 and the exact solution is
 $$\mathbf{u}=\frac{\nu+1}{E}( y^{4},\ x^{4})^{T},\ \ \ \boldsymbol{\sigma}=
      \left(\begin{array}{cc}
      0 &2(x^{3}+y^{3})\\
      2(x^{3}+y^{3}) &0\\
    \end{array}\right).$$
 We only compute the the case of $E=1500,\ \nu=0.25$ for PS and ECQ4 and list the results in Tables
 8-9. It is easy to see that  the displacement accuracy in $H^{1}-$seminorm, as well as the stress accuracy in
 $L^{2}$-norm, is of order 1.

 \begin{table}[!h]\renewcommand{\baselinestretch}{1.25}\small
 \centering
 \caption{The error $\frac{|\mathbf{u}-\mathbf{u}_{h}|_{1}}{|\mathbf{u}|_{1}}$ of Example 3}
\begin{tabular}{cccccccccc}
 \hline
  &&regular &mesh &  & &&irregular &mesh &\\\cline{2-5}\cline{7-10}
 method  &$10\times2$ &$20\times4$ &$40\times8$ &$80\times16$& &$10\times2$ &$20\times4$ &$40\times8$ &$80\times16$ \\\hline
 PS       &0.1022     &0.05120    &0.02561     &0.01281  & &0.1815     &0.08968     &0.04470     &0.02233\\
 ECQ4     &0.1022     &0.05120     &0.02561     &0.01281  & &0.1815     &0.08968     &0.04470     &0.02233\\\hline
\end{tabular}
\end{table}
\begin{table}[!h]\renewcommand{\baselinestretch}{1.25}\small
 \centering
 \caption{The error $\frac{\|\boldsymbol{\sigma}-\boldsymbol{\sigma}_{h}\|_{0}}{\|\boldsymbol{\sigma}\|_{0}}$ of Example 3}
\begin{tabular}{cccccccccc}
 \hline
  &&regular &mesh &  & &&irregular &mesh &\\\cline{2-5}\cline{7-10}
 method  &$10\times2$ &$20\times4$ &$40\times8$ &$80\times16$ & &$10\times2$ &$20\times4$ &$40\times8$ &$80\times16$ \\\hline
 PS       &0.1022     &0.05120     &0.02561     &0.01281  & &0.1806     &0.08590     &0.04239     &0.02113\\
 ECQ4     &0.1022     &0.05120     &0.02561     &0.01281  & &0.1850     &0.09103     &0.04532    &0.02264\\\hline
\end{tabular}
\end{table}

 \setcounter{remark}{0}\setcounter{lemma}{0} \setcounter{theorem}{0}
 \setcounter{section}{4} \setcounter{equation}{0}
\section*{4. \ Uniform a priori error estimates }

\subsection*{4.1. Error analysis for the PS finite element}

To derive  uniform error estimates for the hybrid methods, according to the mixed method theory   \cite{Brezzi1974,Brezzi-Fortin1991}, we need  the following two discrete versions of the stability conditions (A1) and (A2):

 \noindent  ($\mathrm{A1}_{h}$) \ Discrete Kernel-coercivity: For any $\boldsymbol{\tau}\in Z_{h}:=\{\boldsymbol{\tau}\in \Sigma_{h}:
 \int_{\Omega}\boldsymbol{\tau}:\varepsilon(\mathbf{v})d\mathbf{x}=0, \mbox{for all } \mathbf{v}\in V_{h}\}$,
 it holds
 $$ \|\boldsymbol{\tau}\|_{0}^{2}\lesssim a(\boldsymbol{\tau},\boldsymbol{\tau});$$
 ($\mathrm{A2}_{h}$) \ Discrete inf-sup condition: For any $\mathbf{v}\in V_{h}$, it holds
 $$  |\mathbf{v}|_{1}\lesssim\sup_{0\neq\boldsymbol{\tau}\in \Sigma_{h}}
 \frac{\int_{\Omega}\boldsymbol{\tau}:\varepsilon(\mathbf{v})d\mathbf{x}}{\|\boldsymbol{\tau}\|_{0}} .$$

Introduce the spaces
 \begin{equation*}
  W_{h}:=\{q\in L^{2}(\Omega): \hat{q}=q|_{K}\circ F_{K}\in P_{1}(\hat{K})\ \ \mbox{for all } K\in
  T_{h}\},
 \end{equation*}
 \begin{equation*}
  \bar{W}_{h}:=\{\bar{q}\in W_{h}: \bar{q}|_{K}\in P_{0}(K)\ \ \mbox{for all } K\in
  T_{h}\}.
 \end{equation*}
To prove the stability condition $(A1_{h})$ for the PS finite element, we need   the following lemma.

\begin{lemma}
 (\cite{Zhang1997})
 Let the partition $T_{h}$ satisfy the shape-regularity condition (\ref{partition condition}).
 Assume that for any $\bar{q}\in \bar{W}_{h}$,
 there exists some $\mathbf{v}\in V_{h}$ with
 \begin{equation}\label{Q1-P0infsup1}
\|\bar{q}\|_{0}^{2}\lesssim \int_{\Omega}\bar{q}\ {\bf div} \mathbf{v}d\mathbf{x},\ \ |\mathbf{v}|_{1}^{2}\lesssim\|\bar{q}\|_{0}^{2}.
 \end{equation}
 Then it holds
\begin{equation}
\|q\|_{0} \lesssim \sup _{\mathbf{v}\in V_{h},\mathbf{v}^{b}\in B_{h}}\frac{\int_{\Omega}q ({\bf div} \mathbf{v}+\tilde{{\bf div}}
 \mathbf{v}^{b})d\mathbf{x}}{|\mathbf{v}+\mathbf{v}^{b}|_{1,h}}\ \ \ \mbox{ for all }
 q\in W_{h},
\end{equation}
 where  the semi-norm $|\cdot|_{1,h}$ on $V_{h}+B_{h}$ is defined as
 $
 |\cdot|_{1,h}:=(\sum_{K\in
 T_{h}}|\cdot|_{1,K}^{2})^{\frac{1}{2}}. 
 $
\end{lemma}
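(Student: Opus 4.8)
The plan is to split every $q\in W_h$ into its elementwise constant part and a complementary ``linear'' part, to bound each part by a different family of test functions entering the supremum, and then to combine the two estimates by an absorption argument. Concretely, on each $K$ I write $\hat q:=q|_K\circ F_K=c_0+c_1\xi+c_2\eta\in P_1(\hat K)$ and set $\bar q|_K:=c_0\in P_0(K)$, so that $\bar q\in\bar W_h$ while $q-\bar q$ has reference representation $c_1\xi+c_2\eta$. Transforming to $\hat K$ with $J_K=J_0+J_1\xi+J_2\eta$ and using parity to kill the $J_1,J_2$ contributions gives $\|q-\bar q\|_{0,K}^2=\frac{4}{3}J_0(c_1^2+c_2^2)\approx h_K^2(c_1^2+c_2^2)$ by (\ref{JK-hK}). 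Throughout let $S$ denote the supremum on the right-hand side of the claim.

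First I would treat the linear part with bubbles. From the definition of the modified derivatives and (\ref{vB-form}), a bubble $\mathbf{v}^b\in B_h$ with parameters $u_\xi,u_\eta,v_\xi,v_\eta$ satisfies $(J_K\,\tilde{{\bf div}}\,\mathbf{v}^b\circ F_K)(\xi,\eta)=(b_2u_\xi-a_2v_\xi)\xi+(a_1v_\eta-b_1u_\eta)\eta$, which is purely linear with no constant term. Hence transforming $\int_K q\,\tilde{{\bf div}}\,\mathbf{v}^b\,d\mathbf{x}$ to $\hat K$ annihilates $c_0$ and yields $\frac{4}{3}[(b_2u_\xi-a_2v_\xi)c_1+(a_1v_\eta-b_1u_\eta)c_2]$; in particular this functional sees only $q-\bar q$. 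Choosing, elementwise, $v_\xi=u_\eta=0$, $u_\xi=J_0c_1/b_2$ and $v_\eta=J_0c_2/a_1$ makes $\int_K q\,\tilde{{\bf div}}\,\mathbf{v}^b=\frac{4}{3}J_0(c_1^2+c_2^2)=\|q-\bar q\|_{0,K}^2$, while the scaling $|\mathbf{v}^b|_{1,K}\approx|\hat{\mathbf{v}}^b|_{1,\hat K}\approx(u_\xi^2+u_\eta^2+v_\xi^2+v_\eta^2)^{1/2}$ (valid under (\ref{partition condition}); cf. the preceding Remark on the bubble seminorm), together with $a_1\approx b_2\approx h_K$ and $J_0\approx h_K^2$ from (\ref{a1b2-hK})–(\ref{JK-hK}), gives $|\mathbf{v}^b|_{1,K}^2\approx u_\xi^2+v_\eta^2\approx h_K^2(c_1^2+c_2^2)\approx\|q-\bar q\|_{0,K}^2$. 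Summing over $K$ and testing $S$ with this $\mathbf{v}^b$ and $\mathbf{v}=0$ (so the numerator is $\|q-\bar q\|_0^2$ and the denominator $|\mathbf{v}^b|_{1,h}\lesssim\|q-\bar q\|_0$) yields $\|q-\bar q\|_0\lesssim S$.

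Next I would recover the constant part from the hypothesis. Applying (\ref{Q1-P0infsup1}) to $\bar q\in\bar W_h$ produces $\mathbf{v}\in V_h$ with $\|\bar q\|_0^2\lesssim\int_\Omega\bar q\,{\bf div}\,\mathbf{v}$ and $|\mathbf{v}|_1\lesssim\|\bar q\|_0$. Splitting $\int_\Omega q\,{\bf div}\,\mathbf{v}=\int_\Omega\bar q\,{\bf div}\,\mathbf{v}+\int_\Omega(q-\bar q)\,{\bf div}\,\mathbf{v}$ and bounding the second term by Cauchy--Schwarz as $\int_\Omega(q-\bar q)\,{\bf div}\,\mathbf{v}\lesssim\|q-\bar q\|_0|\mathbf{v}|_1\lesssim\|q-\bar q\|_0\|\bar q\|_0$, I obtain the lower bound $\int_\Omega q\,{\bf div}\,\mathbf{v}\gtrsim\|\bar q\|_0^2-C\|q-\bar q\|_0\|\bar q\|_0$. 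On the other hand, testing $S$ with $(\mathbf{v},\mathbf{v}^b=0)$, whose denominator is $|\mathbf{v}|_1$ (as $\mathbf{v}\in V_h\subset H^1$), gives $\int_\Omega q\,{\bf div}\,\mathbf{v}\leq S|\mathbf{v}|_1\lesssim S\|\bar q\|_0$. Comparing the two bounds and dividing by $\|\bar q\|_0$ yields $\|\bar q\|_0\lesssim S+\|q-\bar q\|_0\lesssim S$, the last step invoking the bubble estimate. The triangle inequality $\|q\|_0\leq\|\bar q\|_0+\|q-\bar q\|_0\lesssim S$ then completes the proof.

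The main obstacle is the linear part: one must verify that the special, center-evaluated modified divergence $\tilde{{\bf div}}$ converts the bubble integrals into exact polynomials whose coefficients reproduce $(c_1,c_2)$, and that the explicit bubble is simultaneously $L^2$-dual to $q-\bar q$ and bounded in $|\cdot|_{1,h}$ by it, both uniformly in $h$ and $\lambda$; this is exactly where the geometric estimates $a_1\approx b_2\approx h_K$, $J_0\approx J_K\approx h_K^2$ and the bubble seminorm equivalence are indispensable. The concluding combination is a routine absorption argument, but it is essential that $\|q-\bar q\|_0\lesssim S$ be established first, since the constant-part estimate only closes after the cross term $\int_\Omega(q-\bar q)\,{\bf div}\,\mathbf{v}$ has been controlled by it.
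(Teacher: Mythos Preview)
The paper does not prove this lemma; it is quoted from \cite{Zhang1997} without argument. Your proof is correct and is essentially the standard one: decompose $q$ into its piecewise-constant part $\Pi_0 q$ (your $\bar q$, which coincides with the paper's $\Pi_0 q|_K=\frac{1}{4}\int_K J_K^{-1}q\,d\mathbf{x}=c_0$) and the linear remainder, build an explicit bubble whose modified divergence reproduces the linear coefficients so that $\|q-\bar q\|_0\lesssim S$, and then recover $\|\bar q\|_0$ from the assumed Q1--P0 stability after absorbing the cross term $\int_\Omega(q-\bar q)\,\mathbf{div}\,\mathbf{v}$.

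For comparison, the paper later proves the variant with $\mathbf{div}$ in place of $\tilde{\mathbf{div}}$ (the lemma with a full proof in Section~4.2) by the same two-step pattern, invoking another result of Zhang (the paper's Lemma~4.5) for the bubble. That cited construction differs slightly from yours: there the bubble $\mathbf{v}^b$ is built from \emph{both} $q$ and the already chosen $\mathbf{v}\in V_h$ so that $\int_\Omega(q-\Pi_0 q)(\mathbf{div}\,\mathbf{v}+\tilde{\mathbf{div}}\,\mathbf{v}^b)=\|q-\Pi_0 q\|_0^2$ exactly, at the price of the bound $|\mathbf{v}^b|_{1,h}^2\lesssim\|q-\Pi_0 q\|_0^2+|\mathbf{v}|_1^2$; you instead take $\mathbf{v}^b$ depending on $q$ alone and push the interaction with $\mathbf{v}$ into a separate Cauchy--Schwarz/absorption step. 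Both routes rely on the same geometric ingredients (\ref{a1b2-hK})--(\ref{JK-hK}) and yield the same constants up to the shape-regularity parameter, so the difference is purely organizational.
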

\begin{remark} Under the shape-regularity condition (\ref{partition
 condition}),  the following special property has been shown in \cite{Zhang1997}:
 $$ |\mathbf{v}|_{1}+|\mathbf{v}^{b}|_{1,h}\lesssim |\mathbf{v}+\mathbf{v}^{b}|_{1,h}\lesssim|\mathbf{v}|_{1}+|\mathbf{v}^{b}|_{1,h}\ \ \ \mbox{for all } \mathbf{v}\in V_{h},\mathbf{v}^{b}\in B_{h}.$$
 \end{remark}

In view of this lemma, we have 

\begin{theorem}
 Under the same conditions as in Lemma 4.1,  the uniform discrete Kernel-coercivity condition ($\mathrm{A1}_{h}$)
 holds for the PS finite element with $\boldsymbol{\sigma}_{h}=\boldsymbol{\sigma}_{h}^{PS}.$
\end{theorem}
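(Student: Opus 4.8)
The plan is to mirror the proof of the continuous condition (A1) in Theorem 2.1, replacing the Bramble--Lazarov--Pasciak estimate (Lemma 2.1) by its discrete counterpart Lemma 4.1. The starting point is the orthogonal splitting $\boldsymbol{\tau}=\boldsymbol{\tau}^{D}+\frac{1}{2}\,\mathrm{tr}\boldsymbol{\tau}\,\mathbf{I}$, which gives $\|\boldsymbol{\tau}\|_{0}^{2}=\|\boldsymbol{\tau}^{D}\|_{0}^{2}+\tfrac{1}{2}\|\mathrm{tr}\boldsymbol{\tau}\|_{0}^{2}$ and, from the expression for $a(\boldsymbol{\tau},\boldsymbol{\tau})$, the lower bound $a(\boldsymbol{\tau},\boldsymbol{\tau})\geq\frac{1}{2\mu}\|\boldsymbol{\tau}^{D}\|_{0}^{2}$. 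Since $\mu$ is fixed and the nonnegative term with factor $\frac{1}{4(\mu+\lambda)}$ is simply discarded, it suffices to prove the $\lambda$-independent bound $\|\mathrm{tr}\boldsymbol{\tau}\|_{0}\lesssim\|\boldsymbol{\tau}^{D}\|_{0}$ for every $\boldsymbol{\tau}\in Z_{h}$; this at once yields $\|\boldsymbol{\tau}\|_{0}^{2}\lesssim\|\boldsymbol{\tau}^{D}\|_{0}^{2}\lesssim a(\boldsymbol{\tau},\boldsymbol{\tau})$, i.e. $(\mathrm{A1}_{h})$. I would first record that for $\boldsymbol{\tau}\in\Sigma_{h}^{PS}$ one has $\mathrm{tr}\boldsymbol{\tau}\in W_{h}$, because from (\ref{PSstress1}) the pull-back $\mathrm{tr}\hat{\boldsymbol{\tau}}=\hat{\boldsymbol{\tau}}_{11}+\hat{\boldsymbol{\tau}}_{22}$ lies in $P_{1}(\hat{K})$ on each element.

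With $q:=\mathrm{tr}\boldsymbol{\tau}\in W_{h}$, the heart of the argument is to feed $q$ into Lemma 4.1 and to evaluate the resulting numerator. Using $\mathrm{tr}\boldsymbol{\tau}\,\mathrm{div}\mathbf{v}=2\big(\boldsymbol{\tau}-\boldsymbol{\tau}^{D}\big):\varepsilon(\mathbf{v})$ together with the analogous identity $\mathrm{tr}\boldsymbol{\tau}\,\tilde{{\bf div}}\mathbf{v}^{b}=2\big(\boldsymbol{\tau}-\boldsymbol{\tau}^{D}\big):\tilde{\varepsilon}(\mathbf{v}^{b})$ (both rest on $\boldsymbol{\tau}-\boldsymbol{\tau}^{D}=\frac{1}{2}\mathrm{tr}\boldsymbol{\tau}\,\mathbf{I}$ and $\mathbf{I}:\varepsilon=\mathrm{div}$, $\mathbf{I}:\tilde{\varepsilon}=\tilde{{\bf div}}$), I would split the numerator into a $\boldsymbol{\tau}$-part and a $\boldsymbol{\tau}^{D}$-part. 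The two $\boldsymbol{\tau}$-parts vanish for exactly the two structural reasons available here: $\int_{\Omega}\boldsymbol{\tau}:\varepsilon(\mathbf{v})\,d\mathbf{x}=0$ because $\boldsymbol{\tau}\in Z_{h}$ and $\mathbf{v}\in V_{h}$, while $\sum_{K}\int_{K}\boldsymbol{\tau}:\tilde{\varepsilon}(\mathbf{v}^{b})\,d\mathbf{x}=0$ by the PS orthogonality (\ref{PSconstraint}) for $\mathbf{v}^{b}\in B_{h}$. Hence the numerator collapses to $-2\int_{\Omega}\boldsymbol{\tau}^{D}:\varepsilon(\mathbf{v})\,d\mathbf{x}-2\sum_{K}\int_{K}\boldsymbol{\tau}^{D}:\tilde{\varepsilon}(\mathbf{v}^{b})\,d\mathbf{x}$, involving only the deviatoric part $\boldsymbol{\tau}^{D}$.

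It then remains to bound this by $\|\boldsymbol{\tau}^{D}\|_{0}\,|\mathbf{v}+\mathbf{v}^{b}|_{1,h}$. By Cauchy--Schwarz the first term is $\leq\|\boldsymbol{\tau}^{D}\|_{0}\,|\mathbf{v}|_{1}$ and the second is $\leq\|\boldsymbol{\tau}^{D}\|_{0}\,\|\tilde{\varepsilon}(\mathbf{v}^{b})\|_{0}$; the required estimates $|\mathbf{v}|_{1}\lesssim|\mathbf{v}+\mathbf{v}^{b}|_{1,h}$ and $\|\tilde{\varepsilon}(\mathbf{v}^{b})\|_{0}\lesssim|\mathbf{v}^{b}|_{1,h}\lesssim|\mathbf{v}+\mathbf{v}^{b}|_{1,h}$ follow from the norm equivalence of \cite{Zhang1997} quoted in the remark after Lemma 4.1 together with the geometric bounds (\ref{a1b2-hK})--(\ref{JK-hK}), the point being that the modified strain merely freezes the Jacobian coefficients at the element center and hence stays comparable to the genuine strain. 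Dividing by $|\mathbf{v}+\mathbf{v}^{b}|_{1,h}$, taking the supremum and invoking Lemma 4.1 gives $\|\mathrm{tr}\boldsymbol{\tau}\|_{0}\lesssim\|\boldsymbol{\tau}^{D}\|_{0}$, which closes the argument.

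I anticipate the main obstacle to be the clean handling of the bubble contribution: one must pass from the modified divergence $\tilde{{\bf div}}\mathbf{v}^{b}$ supplied by Lemma 4.1 to the modified strain $\tilde{\varepsilon}(\mathbf{v}^{b})$ so that the PS constraint (\ref{PSconstraint}) --- which is formulated with $\tilde{\varepsilon}$ --- applies verbatim, and then to control $\|\tilde{\varepsilon}(\mathbf{v}^{b})\|_{0}$ uniformly in the element geometry. Everything else is a direct transcription of the continuous proof, with the orthogonality to bubbles (\ref{PSconstraint}) playing precisely the role that the kernel condition $\boldsymbol{\tau}\in Z_{h}$ plays for the conforming test functions $\mathbf{v}\in V_{h}$.
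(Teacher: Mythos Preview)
Your proposal is correct and follows essentially the same approach as the paper's proof: reduce to $\|\mathrm{tr}\boldsymbol{\tau}\|_{0}\lesssim\|\boldsymbol{\tau}^{D}\|_{0}$, use the PS orthogonality (\ref{PSconstraint}) to add $\tilde{\varepsilon}(\mathbf{v}^{b})$ to the kernel identity, and then apply Lemma~4.1. You are in fact slightly more careful than the paper in explicitly noting $\mathrm{tr}\boldsymbol{\tau}\in W_{h}$ and in spelling out the final Cauchy--Schwarz/norm-equivalence step, which the paper compresses into a single ``$\lesssim\|\boldsymbol{\tau}^{D}\|_{0}$''.
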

\begin{proof}
 Similar to the proof of Theorem 2.1,  it suffices to show $\|tr\boldsymbol{\tau}\|_{0}\lesssim\|\boldsymbol{\tau}^{D}\|_{0}$ for any $\boldsymbol{\tau}\in Z_{h}$.
 
 In fact,  for $\boldsymbol{\tau}\in Z_{h}$, $\mbox{for all }\mathbf{v}\in V_{h}$ and $\mbox{for all } \mathbf{v}^{b}\in
 B_{h}$, it holds
\begin{eqnarray*}
 0 &= &\int_{\Omega}\boldsymbol{\tau}:\varepsilon(\mathbf{v})d\mathbf{x}\\
   &=
   &\int_{\Omega}\boldsymbol{\tau}:(\varepsilon(\mathbf{v})+\tilde{\varepsilon}(\mathbf{v}^{b}))d\mathbf{x}\\
   &= &\int_{\Omega}(\frac{1}{2}tr\boldsymbol{\tau}\mathbf{I}+\boldsymbol{\tau}^{D}):
    (\varepsilon(\mathbf{v})+\tilde{\varepsilon}(\mathbf{v}^{b}))d\mathbf{x}\\
   &= &\int_{\Omega}\frac{1}{2}tr\boldsymbol{\tau}(\mbox{div}\mathbf{v}+\tilde{\mbox{div}}\mathbf{v}^{b})d\mathbf{x}
   +\int_{\Omega}\boldsymbol{\tau}^{D}:(\varepsilon(\mathbf{v})+\tilde{\varepsilon}(\mathbf{v}^{b}))d\mathbf{x}.
\end{eqnarray*}
 Thus, by Lemma 4.1, we get {\footnotesize
\begin{eqnarray*}
\|tr\boldsymbol{\tau}\|_{0}&\lesssim& \sup\limits_{\mathbf{v}\in V_{h},\mathbf{v}^{b}\in B_{h}}
 \frac{\int_{\Omega}tr\boldsymbol{\tau}(\mbox{div}\mathbf{v}+\tilde{\mbox{div}}\mathbf{v}^{b})d\mathbf{x}}{|\mathbf{v}+\mathbf{v}^{b}|_{1,h}}\\
&=& \sup\limits_{\mathbf{v}\in V_{h},\mathbf{v}^{b}\in B_{h}}
 \frac{-2\int_{\Omega}\boldsymbol{\tau}^{D}:(\varepsilon(\mathbf{v})+\tilde{\varepsilon}(\mathbf{v}^{b}))d\mathbf{x}}{|\mathbf{v}+\mathbf{v}^{b}|_{1,h}}
 \lesssim \|\boldsymbol{\tau}^{D}\|_{0}.
\end{eqnarray*}
}
This completes the proof.
 \end{proof}

 This theorem states that any quadrilateral mesh which is stable for
 the Stokes element Q1-P0  is sufficient for ($\mathrm{A1}_{h}$). As we know,   the only unstable case for  Q1-P0   is the
 checkerboard mode.  Thereupon,  any quadrilateral mesh which
 breaks the checkerboard mode is sufficient for the uniform stability
 ($\mathrm{A1}_{h}$).

The latter part of this subsection is devoted to the proof of the discrete inf-sup condition ($\mathrm{A2}_{h}$).  It should be pointed out that in \cite{Zhou-Xie2002} there has been a proof for this stability condition. However, we shall  give a more simpler one here.  

 From (\ref{displacementform}), for any $\mathbf{v}\in V_{h}$ we have
{\small  $$
 J_{K}
 \left(\begin{array}{c}
  \frac{\partial u}{\partial x}\\
  \frac{\partial v}{\partial y}\\
  \frac{\partial u}{\partial y}+\frac{\partial v}{\partial x}\\
 \end{array}\right)=
 \left(\begin{array}{c}
  (U_{1}b_{2}-U_{2}b_{1})+(U_{1}b_{12}-U_{12}b_{1})\xi+(U_{12}b_{2}-U_{2}b_{12})\eta\\\\
  (V_{2}a_{1}-V_{1}a_{2})+(V_{12}a_{1}-V_{1}a_{12})\xi+(V_{2}a_{12}-V_{12}a_{2})\eta\\\\
  (U_{2}a_{1}-U_{1}a_{2})+(U_{12}a_{1}-U_{1}a_{12})\xi+(U_{2}a_{12}-U_{12}a_{2})\eta\\
  +(V_{1}b_{2}-V_{2}b_{1})+(V_{1}b_{12}-V_{12}b_{1})\xi+(V_{12}b_{2}-V_{2}b_{12})\eta\\
 \end{array}\right)
 $$
\begin{equation}\label{epsilon-new}
 = 
 \left(\begin{array}{ccccc}
  b_{2}+b_{12}\xi &-b_{1}-b_{12}\eta &-b_{1}\xi+b_{2}\eta &0 &0\\
  0 &0 &0 &a_{1}+a_{12}\eta &a_{1}\xi-a_{2}\eta\\
  -a_{2}-a_{12}\xi &a_{1}+a_{12}\eta &a_{1}\xi-a_{2}\eta
  &-b_{1}-b_{12}\eta &-b_{1}\xi+b_{2}\eta
 \end{array}\right)\beta^v
 \end{equation}
 }
 with $\beta^v=(\beta^v_1,\cdots,\beta_5^v)^T:={\small (
 U_{1}+\frac{b_{1}}{a_{1}}V_{1},
U_{2}+\frac{b_{2}}{a_{1}}V_{1}, U_{12}+\frac{b_{12}}{a_{1}}V_{1},
V_{2}-\frac{a_{2}}{a_{1}}V_{1},
V_{12}-\frac{a_{12}}{a_{1}}V_{1})^{T}.}
$

\begin{lemma}\label{epsilon-v}
For any $\mathbf{v}\in V_{h}$ and $K\in T_h$, it holds
\begin{equation}
\|\varepsilon(\mathbf{v})\|_{0,K}^{2}\lesssim
\frac{1}{\min\limits_{(\xi,\eta)\in \hat{K}}
J_{K}(\xi,\eta)}h_{K}^{2}\sum_{1\leq i\leq 5} (\beta^{v}_{i})^{2}.
\end{equation}
\end{lemma}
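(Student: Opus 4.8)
The plan is to transform the integral to the reference square $\hat K$ and to read the three strain components directly off the matrix identity \eqref{epsilon-new}.

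First I would bound the symmetric-tensor norm by the sum of squares of the three independent derivative combinations: since $\varepsilon_{12}=\tfrac12(\tfrac{\partial u}{\partial y}+\tfrac{\partial v}{\partial x})$, one has $2\varepsilon_{12}^2\le(\tfrac{\partial u}{\partial y}+\tfrac{\partial v}{\partial x})^2$, so
\[
\|\varepsilon(\mathbf v)\|_{0,K}^2
\le \int_K\Big[\big(\tfrac{\partial u}{\partial x}\big)^2+\big(\tfrac{\partial v}{\partial y}\big)^2+\big(\tfrac{\partial u}{\partial y}+\tfrac{\partial v}{\partial x}\big)^2\Big]\,d\mathbf x .
\]
Now I change variables by $F_K$, using $d\mathbf x=J_K\,d\xi\,d\eta$. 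By \eqref{epsilon-new} the three bracketed quantities equal, respectively, $(M\beta^v)_1/J_K$, $(M\beta^v)_2/J_K$ and $(M\beta^v)_3/J_K$, where $M=M(\xi,\eta)$ is the $3\times5$ matrix appearing there. The two powers of $J_K$ in the denominator of the squared integrand cancel against the factor $J_K$ from $d\mathbf x$, leaving a single negative power; hence
\[
\|\varepsilon(\mathbf v)\|_{0,K}^2
\le \int_{\hat K}\frac{|M(\xi,\eta)\beta^v|^2}{J_K(\xi,\eta)}\,d\xi\,d\eta
\le \frac{1}{\min_{(\xi,\eta)\in\hat K}J_K(\xi,\eta)}\int_{\hat K}|M(\xi,\eta)\beta^v|^2\,d\xi\,d\eta .
\]

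The main estimation step is to control the last integral. By Cauchy--Schwarz, $|M\beta^v|^2\le\big(\sum_{k,i}M_{ki}^2\big)\sum_{1\le i\le5}(\beta^v_i)^2$, so it remains to show $\int_{\hat K}\sum_{k,i}M_{ki}^2\,d\xi\,d\eta\lesssim h_K^2$. Every entry of $M$ is a polynomial in $(\xi,\eta)$ whose coefficients lie among $a_1,a_2,a_{12},b_1,b_2,b_{12}$; since $\xi,\eta\in[-1,1]$ and $|\hat K|=4$, each $\int_{\hat K}M_{ki}^2$ is bounded by a fixed multiple of the squares of those coefficients. The geometric estimates \eqref{a1b1-a2b2} of Lemma \ref{zhang} (equivalently \eqref{a1b2-hK}) give $a_1^2+b_1^2,\ a_2^2+b_2^2\lesssim h_K^2$ and $a_{12}^2+b_{12}^2\lesssim h_K^2$, so every coefficient is $O(h_K)$ and therefore $\int_{\hat K}\sum_{k,i}M_{ki}^2\lesssim h_K^2$. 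Substituting into the previous display yields exactly
\[
\|\varepsilon(\mathbf v)\|_{0,K}^2\lesssim\frac{h_K^2}{\min_{(\xi,\eta)\in\hat K}J_K(\xi,\eta)}\sum_{1\le i\le5}(\beta^v_i)^2 .
\]

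I do not expect a genuine obstacle: once \eqref{epsilon-new} is available the computation is routine, and the geometric bounds of Lemma \ref{zhang} do the real work. The only point requiring care is the Jacobian bookkeeping in the change of variables --- that the division by $J_K$ in each strain component combines with the $J_K$ from the area element to leave precisely one factor $1/J_K$, which is what produces the sharp weight $1/\min_{\hat K}J_K$ rather than a cruder $h_K^{-2}$.
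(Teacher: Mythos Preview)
Your proposal is correct and follows essentially the same approach as the paper: transform to $\hat K$ via \eqref{epsilon-new}, observe that the two Jacobian factors leave a single $J_K^{-1}$, pull out $1/\min_{\hat K}J_K$, and bound the remaining integrand using the $O(h_K)$ size of the coefficients $a_i,b_i,a_{12},b_{12}$ from Lemma~\ref{zhang}. The only cosmetic differences are that the paper keeps the exact factor $\tfrac12$ in front of the shear term (you over-bound it by $1$, which is harmless for a $\lesssim$ estimate) and writes out the three squared linear forms explicitly rather than packaging them as $|M\beta^v|^2$ with Cauchy--Schwarz.
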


\begin{proof} From (\ref{epsilon-new}) we have
{\small
\begin{eqnarray*}
&&\|\varepsilon(\mathbf{v})\|_{0,K}^{2}=\int_{K}\varepsilon(\mathbf{v}):\varepsilon(\mathbf{v})dx\\
&=&\int_{\hat{K}}\left[((b_{2}+b_{12}\xi)\beta^{v}_{1}-(b_{1}+b_{12}\eta)\beta^{v}_{2}-(b_{1}\xi-b_{2}\eta)\beta^{v}_{3})^{2}
+((a_{1}+a_{12}\eta)\beta^{v}_{4}+(a_{1}\xi-a_{2}\eta)\beta^{v}_{5})^{2}\right.\\
&&\left.+\frac{1}{2}(-(a_{2}+a_{12}\xi)\beta^{v}_{1}+(a_{1}+a_{12}\eta)\beta^{v}_{2}+(a_{1}\xi-a_{2}\eta)\beta^{v}_{3}-(b_{1}+b_{12}\eta)\beta^{v}_{4}-(b_{1}\xi-b_{2}\eta)\beta^{v}_{5})^{2}\right]\\
&&\cdot J^{-1}_{K}(\xi,\eta)d\xi
d\eta\\
&\lesssim& \frac{1}{\min\limits_{(\xi,\eta)\in \hat{K}}
J_{K}(\xi,\eta)}h_{K}^{2}\sum_{1\leq i\leq 5} (\beta^{v}_{i})^{2}.
\end{eqnarray*}
}
\end{proof}

\begin{lemma}\label{PS-stress-ineq}
For any $\boldsymbol{\tau}\in\Sigma_{h}^{PS}$ and $K\in T_h$, it holds
\begin{equation}
\|\boldsymbol{\tau}\|_{0,K}^{2}\gtrsim \min_{(\xi,\eta)\in \hat{K}}
J_{K}(\xi,\eta)\sum_{1\leq i\leq 5} (\beta^{\tau}_{i})^{2}.
\end{equation}
\end{lemma}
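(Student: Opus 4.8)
The plan is to transform the integral to the reference square $\hat{K}$ and exploit the mutual $L^{2}(\hat{K})$-orthogonality of the monomials $1,\xi,\eta$ occurring in the PS stress mode (\ref{PSstress1}). First I would use the substitution $\mathbf{x}=F_{K}(\xi,\eta)$ to write
\[
\|\boldsymbol{\tau}\|_{0,K}^{2}=\int_{\hat{K}}\bigl(\hat{\boldsymbol{\tau}}_{11}^{2}+\hat{\boldsymbol{\tau}}_{22}^{2}+2\hat{\boldsymbol{\tau}}_{12}^{2}\bigr)\,J_{K}(\xi,\eta)\,d\xi\,d\eta.
\]
Since $J_{K}>0$ on $\hat{K}$, the factor $J_{K}(\xi,\eta)$ may be bounded below by $\min_{(\xi,\eta)\in\hat{K}}J_{K}$ and pulled out of the integral, so that the claim reduces to the purely reference-element, geometry-robust estimate $\int_{\hat{K}}(\hat{\boldsymbol{\tau}}_{11}^{2}+\hat{\boldsymbol{\tau}}_{22}^{2}+2\hat{\boldsymbol{\tau}}_{12}^{2})\,d\xi\,d\eta\gtrsim\sum_{1\le i\le5}(\beta_{i}^{\tau})^{2}$.

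Next I would insert the three explicit entries of $\hat{\boldsymbol{\tau}}$ from (\ref{PSstress1}) and expand the squares. The decisive structural fact is that $\int_{\hat{K}}\xi\,d\xi\,d\eta=\int_{\hat{K}}\eta\,d\xi\,d\eta=\int_{\hat{K}}\xi\eta\,d\xi\,d\eta=0$ on $\hat{K}=[-1,1]^{2}$; consequently every product of a constant coefficient $\beta_{1},\beta_{2},\beta_{3}$ with a linear coefficient $\beta_{4},\beta_{5}$, and also the $\beta_{4}\beta_{5}$ product, integrates to zero. Hence the quadratic form in $\beta^{\tau}$ is diagonal, and using $\int_{\hat{K}}1=4$ and $\int_{\hat{K}}\xi^{2}=\int_{\hat{K}}\eta^{2}=\frac{4}{3}$ the coefficients of $\beta_{4}^{2}$ and $\beta_{5}^{2}$ assemble into the perfect squares $\frac{4}{3}\bigl(1+b_{1}^{2}/a_{1}^{2}\bigr)^{2}$ and $\frac{4}{3}\bigl(1+a_{2}^{2}/b_{2}^{2}\bigr)^{2}$, respectively, while $\beta_{1}^{2},\beta_{2}^{2},\beta_{3}^{2}$ carry the coefficients $4,4,8$.

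Finally, I would note that each diagonal coefficient is bounded below by an absolute constant, since $\bigl(1+b_{1}^{2}/a_{1}^{2}\bigr)^{2}\ge1$ and $\bigl(1+a_{2}^{2}/b_{2}^{2}\bigr)^{2}\ge1$; thus the reference-element integral is at least $\frac{4}{3}\sum_{1\le i\le5}(\beta_{i}^{\tau})^{2}$, and multiplying back by $\min_{\hat{K}}J_{K}$ yields the assertion. I do not expect a genuine obstacle here; the one point that deserves care is verifying that the geometry-dependent coefficients of $\beta_{4}$ and $\beta_{5}$ do not degenerate as the element distorts --- and this is precisely guaranteed by their collapsing into perfect squares that are bounded below by $1$, independently of the mesh.
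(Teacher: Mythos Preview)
Your proposal is correct and follows essentially the same approach as the paper: transform to the reference square, pull out $\min_{\hat K}J_K$, and use the $L^2(\hat K)$-orthogonality of $1,\xi,\eta$ to diagonalize the quadratic form in $\beta^\tau$, arriving at the same constant $\tfrac{4}{3}$. The paper simply states the final inequality without spelling out the orthogonality or the perfect-square structure of the $\beta_4^2,\beta_5^2$ coefficients, whereas you make these steps explicit.
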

\begin{proof} The form (\ref{PSstress1}) indicates
\begin{eqnarray*}
\|\boldsymbol{\tau}\|_{0,K}^{2}&=&\int_{K}\boldsymbol{\tau}:\boldsymbol{\tau} dx=\int_{\hat{K}}\left[(\beta^{\tau}_{1}+\eta \beta^{\tau}_{4}+\frac{a_{2}^{2}}{b_{2}^{2}}\xi \beta^{\tau}_{5})^{2}+(\beta^{\tau}_{2}+\frac{b_{1}^{2}}{a_{1}^{2}}\eta \beta^{\tau}_{4}+\xi \beta^{\tau}_{5})^{2}\right.\\
&&\hskip2cm\left.+2(\beta^{\tau}_{3}+\frac{b_{1}}{a_{1}}\eta
\beta^{\tau}_{4}+\frac{a_{2}}{b_{2}}\xi
\beta^{\tau}_{5})^{2}\right]J_{K}(\xi,\eta) d\xi
d\eta\\
&\geq &\frac{4}{3}\min\limits_{(\xi,\eta)\in \hat{K}}
J_{K}(\xi,\eta)\sum_{1\leq i\leq 5} (\beta^{\tau}_{i})^{2}.
\end{eqnarray*}
\end{proof}

\begin{lemma}\label{tau-epsilon-relation}
For any $\mathbf{v}\in V_{h}$, there exists a $\boldsymbol{\tau}_{v}\in \Sigma_{h}^{PS}$ such
that  for any   $K\in T_h$,
 \begin{equation}\label{tau-eps2}
 \int_{K}\boldsymbol{\tau}_{v}:\varepsilon(\mathbf{v}) dx = \|\boldsymbol{\tau}_{v}\|_{0,K}^{2} \gtrsim
 \|\varepsilon(\mathbf{v}) \|_{0,K}^{2}.
 \end{equation}

\end{lemma}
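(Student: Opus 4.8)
The plan is to take $\boldsymbol{\tau}_v$ to be the local $L^2(K)$-orthogonal projection of the symmetric tensor $\varepsilon(\mathbf{v})$ onto the element stress space $\Sigma_h^{PS}|_K$. With this choice the claimed equality comes for free: since $\boldsymbol{\tau}_v\in\Sigma_h^{PS}|_K$ and $\varepsilon(\mathbf{v})-\boldsymbol{\tau}_v\perp\Sigma_h^{PS}|_K$ in $L^2(K)$, testing the orthogonality with $\boldsymbol{\tau}_v$ itself gives $\int_K\boldsymbol{\tau}_v:\varepsilon(\mathbf{v})\,dx=\int_K\boldsymbol{\tau}_v:\boldsymbol{\tau}_v\,dx=\|\boldsymbol{\tau}_v\|_{0,K}^2$. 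The entire content of the lemma is therefore the lower bound $\|\boldsymbol{\tau}_v\|_{0,K}^2\gtrsim\|\varepsilon(\mathbf{v})\|_{0,K}^2$, i.e. that projecting onto the PS stress modes does not annihilate a non-uniform fraction of the strain energy. This is precisely a uniform strengthened Cauchy--Schwarz inequality between the local strain space and $\Sigma_h^{PS}|_K$.

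To prove this I would not compute the projection, but instead exhibit a single explicit comparison stress $\boldsymbol{\tau}^\ast\in\Sigma_h^{PS}|_K$, built from the same parameter vector $\beta^v$ that represents $\varepsilon(\mathbf{v})$ through (\ref{epsilon-new}), with the two properties $\int_K\boldsymbol{\tau}^\ast:\varepsilon(\mathbf{v})\,dx\gtrsim\|\varepsilon(\mathbf{v})\|_{0,K}^2$ and $\|\boldsymbol{\tau}^\ast\|_{0,K}\lesssim\|\varepsilon(\mathbf{v})\|_{0,K}$. Granting these, orthogonality of the projection yields $\int_K\boldsymbol{\tau}^\ast:\varepsilon(\mathbf{v})\,dx=\int_K\boldsymbol{\tau}^\ast:\boldsymbol{\tau}_v\,dx\le\|\boldsymbol{\tau}^\ast\|_{0,K}\|\boldsymbol{\tau}_v\|_{0,K}$, so that $\|\varepsilon(\mathbf{v})\|_{0,K}^2\lesssim\|\boldsymbol{\tau}^\ast\|_{0,K}\|\boldsymbol{\tau}_v\|_{0,K}\lesssim\|\varepsilon(\mathbf{v})\|_{0,K}\|\boldsymbol{\tau}_v\|_{0,K}$, and dividing by $\|\varepsilon(\mathbf{v})\|_{0,K}$ gives exactly $\|\boldsymbol{\tau}_v\|_{0,K}\gtrsim\|\varepsilon(\mathbf{v})\|_{0,K}$.

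The construction and verification of $\boldsymbol{\tau}^\ast$ is where the real work lies. I would transform everything to $\hat{K}$, writing $(\varepsilon_{11},\varepsilon_{22},2\varepsilon_{12})^{T}=J_K^{-1}M(\xi,\eta)\beta^v$ from (\ref{epsilon-new}) and $\boldsymbol{\tau}^\ast=P(\xi,\eta)\beta^\tau$ from (\ref{PSstress1}) with $\beta^\tau$ chosen as an explicit, suitably $h_K$-rescaled linear function of $\beta^v$. The pairing then becomes a parameter-level quadratic form $(\beta^\tau)^{T}A\beta^v$ with $A=\int_{\hat{K}}P^{T}\mathrm{diag}(1,1,2)\,M\,d\xi\,d\eta$ (the factor $2$ encoding the shear doubling), whose entries collapse, via the vanishing of the first moments and $\int_{\hat{K}}\xi^2=\int_{\hat{K}}\eta^2=4/3$, to explicit expressions in $a_1,b_2,a_2,b_1,a_{12},b_{12}$ and $J_0,J_1,J_2$. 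The two inequalities on $\boldsymbol{\tau}^\ast$ then follow by comparing these against $\|\boldsymbol{\tau}^\ast\|_{0,K}^2$ and $\|\varepsilon(\mathbf{v})\|_{0,K}^2$ through Lemma \ref{PS-stress-ineq} and Lemma \ref{epsilon-v} (with their two-sided companions) together with the scalings $a_1\approx b_2\approx h_K$ and $J_K\approx J_0\approx h_K^2$ of (\ref{a1b2-hK}), (\ref{JK-hK}); the matching powers of $h_K$ on the two sides cancel in the final ratio, so the $h$-free and $\lambda$-free constants survive.

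The main obstacle is the uniform coercivity of the quadratic form $(\beta^v)^{T}A\beta^v$ (or of its symmetric part after rescaling) over all admissible element geometries, equivalently the uniform angle bound between $\{\varepsilon(\mathbf{v})\}$ and $\Sigma_h^{PS}|_K$. The difficulty is that the distortion parameters $a_{12},b_{12}$ are only $O(h_K)$, the same order as the dominant $a_1\approx b_2$, so they cannot be dropped as higher order; instead one must use the shape-regularity consequence (\ref{partition satisfy1}), equivalently $J_K=J_0+J_1\xi+J_2\eta>0$ on $\hat{K}$ with $\max_{\hat{K}}J_K/\min_{\hat{K}}J_K\le\varrho^2/2$, to bound $|J_1|,|J_2|$ against $J_0$ and thereby keep the off-diagonal couplings of the two non-constant (linear) stress modes in (\ref{PSstress1}) dominated by the positive diagonal block. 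Verifying that the specific rational PS choice of stress modes renders this $5\times5$ form positive definite with a constant depending only on $\varrho$ is the technical heart of the argument.
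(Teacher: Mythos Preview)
Your strategy is correct and lands on essentially the same computation as the paper, but the packaging differs in an instructive way. You take $\boldsymbol{\tau}_v$ to be the exact $L^2(K)$-projection of $\varepsilon(\mathbf{v})$ onto $\Sigma_h^{PS}|_K$, which makes the equality $\int_K\boldsymbol{\tau}_v:\varepsilon(\mathbf{v})=\|\boldsymbol{\tau}_v\|_{0,K}^2$ automatic, and then reduce the lower bound to exhibiting a comparison stress $\boldsymbol{\tau}^\ast$. The paper instead builds $\boldsymbol{\tau}_v$ \emph{explicitly}: it writes $\int_K\boldsymbol{\tau}:\varepsilon(\mathbf{v})=(\beta^\tau)^T\mathbf{A}\beta^v$ and, via the mean-value theorem, $\|\boldsymbol{\tau}\|_{0,K}^2=J_K(\xi_0,\eta_0)(\beta^\tau)^T\mathbf{D}\beta^\tau$ with a diagonal $\mathbf{D}$, then sets $\beta^{\tau,v}=J_K(\xi_0,\eta_0)^{-1}\mathbf{D}^{-1}\mathbf{A}\beta^v$. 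This choice---a weighted projection with the simplified diagonal Gram matrix---again forces the equality, so the paper's $\boldsymbol{\tau}_v$ plays simultaneously the role of your $\boldsymbol{\tau}_v$ and your $\boldsymbol{\tau}^\ast$.

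Where you leave the ``technical heart'' (uniform coercivity of the parameter-level form) as an obstacle to be overcome via the bound $|J_1|+|J_2|\lesssim J_0$, the paper bypasses the abstract coercivity discussion entirely: it writes down $\tilde{\mathbf{A}}^{-1}$ explicitly (a $5\times5$ matrix in $a_1,a_2,b_1,b_2,J_0,J_1,J_2$), reads off that each entry is $O(h_K^{-1})$ from (\ref{a1b2-hK})--(\ref{JK-hK}), and concludes $\sum_i(\beta_i^v)^2\lesssim h_K^2\sum_i(\beta_i^{\tau,v})^2$, which combined with Lemmas~\ref{epsilon-v}--\ref{PS-stress-ineq} gives the lower bound. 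Your route would work but still needs exactly this invertibility-with-bounded-inverse; the paper's brute-force inversion is how it is actually discharged. One caution: taking $\beta^\tau=\beta^v$ (your suggestion of ``$(\beta^v)^T A\beta^v$'') runs into the asymmetry of $\mathbf{A}$; the paper's choice $\beta^\tau\propto\mathbf{D}^{-1}\mathbf{A}\beta^v$ symmetrizes the form to $\mathbf{A}^T\mathbf{D}^{-1}\mathbf{A}$ and reduces everything to the nondegeneracy of $\mathbf{A}$, which is the cleanest way to close the argument.
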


\begin{proof} We follow the same line as in the proof of [Lemma 4.4, \cite{Carstensen.C;Xie.X;Yu.G;Zhou2010}].

For $\boldsymbol{\tau}\in \Sigma_{h}^{PS}$ and $\mathbf{v}\in V_{h}$, from (\ref{PSstress1}) and (\ref{epsilon-new}) it holds
\begin{equation*}{\small
\int_{K}\boldsymbol{\tau}:\varepsilon(\mathbf{v})dx =(\beta^{\tau})^{T} \left(
\begin{array}{ccccc}
 4b_{2} &-4b_{1} &0 &0 &0 \\
 0 &0 &0 &4a_{1} &0 \\
 -4a_{2} &4a_{1} &0 &-4b_{1} &0\\
 0 &-\frac{4}{3}\frac{J_{1}}{a_{1}} &\frac{4}{3}\frac{J_{0}}{a_{1}} &-\frac{4}{3}\frac{b_{1}J_{1}}{a_{1}^{2}} &\frac{4}{3}\frac{b_{1}J_{0}}{a_{1}^{2}}\\
 -\frac{4}{3}\frac{a_{2}J_{2}}{b_{2}^{2}} &0 &\frac{4}{3}\frac{a_{2}J_{0}}{b_{2}^{2}}  &0 &\frac{4}{3}\frac{J_{0}}{b_{2}}\\
\end{array}\right)
\beta^{v}:=(\beta^{\tau})^{T}{\bf A}\beta^{v}.
}
\end{equation*} 
By  mean value theorem,  there exists a point $(\xi_{0},\eta_{0})\in [-1,1]^{2}$ such
that
\begin{equation}{\small
 \|\boldsymbol{\tau}\|_{0,K}^{2}= J_{K}(\xi_{0},\eta_{0})(\beta^{\tau})^{T}
{\bf D}
\beta^{\tau}
}
\end{equation}
with ${\bf D}=\mbox{diag}\left(4,4,8,\frac{4}{3}[1+2(\frac{b_{1}}{a_{1}})^{2}+(\frac{b_{1}^{2}}{a_{1}^{2}})^{2}],\frac{4}{3}[1+2(\frac{a_{2}}{b_{2}})^{2}+(\frac{a_{2}^{2}}{b_{2}^{2}})^{2}]\right)$.

Denote ${\bf \tilde{D}}:=\mbox{diag}\left(1,1,1,\frac{a_{1}^{4}}{a_{1}^{4}+2a_{1}^{2}b_{1}^{2}+b_{1}^{4}},\frac{b_{2}^{4}}{a_{2}^{4}+2a_{2}^{2}b_{2}^{2}+b_{2}^{4}}\right)$,
$$ {\bf \tilde{A}}:= \left(
\begin{array}{ccccc}
 b_{2} &-b_{1} &0 &0 &0 \\
 0 &0 &0 &a_{1} &0 \\
 -\frac{a_{2}}{2} &\frac{a_{1}}{2} &0 &-\frac{b_{1}}{2} &0\\
 0 &-\frac{J_{1}}{a_{1}} &\frac{J_{0}}{a_{1}} &-\frac{b_{1}J_{1}}{a_{1}^{2}} &\frac{b_{1}J_{0}}{a_{1}^{2}}\\
 -\frac{a_{2}J_{2}}{b_{2}^{2}} &0 &\frac{a_{2}J_{0}}{b_{2}^{2}}  &0 &\frac{J_{0}}{b_{2}}\\
\end{array}\right),$$
and take
 \begin{eqnarray*}
\boldsymbol{\tau}_{v}=\left(
\begin{array}{ccccc}
 1 &0 &0 &\eta &\frac{a_{2}^{2}}{b_{2}^{2}}\xi\\
 0 &1 &0 &\frac{b_{1}^{2}}{a_{1}^{2}}\eta &\xi\\
 0 &0 &1 &\frac{b_{1}}{a_{1}}\eta &\frac{a_{2}}{b_{2}}\xi
\end{array}\right)
\beta^{\tau,v}
\end{eqnarray*}
with
\begin{equation}\label{beta-tau-v}
\beta^{\tau, v}
=\frac{1}{J_{K}(\xi_{0},\eta_{0})}{\bf D}^{-1}{\bf A}\beta^{v}= \frac{1}{J_{K}(\xi_{0},\eta_{0})}{\bf \tilde{D}}{\bf \tilde{A}}\beta^{v},
\end{equation}
we then obtain
\begin{equation}\label{tau-eps}
\int_{K}\boldsymbol{\tau}_{v}:\varepsilon(\mathbf{v})dx=\|\boldsymbol{\tau}_{v}\|_{0,K}^{2}.
\end{equation}

On the other hand, (\ref{beta-tau-v}) yields
$$
\beta^{v}={J_{K}(\xi_{0},\eta_{0})}{\bf \tilde{A}}^{-1}{\bf \tilde{D}}^{-1}\beta^{\tau, v}
$$ 
with
$${\bf \tilde{A}}^{-1}=
 \left(
\begin{array}{ccccc}
  \frac{a_{1}}{J_{0}}
  &\frac{b_{1}^{2}}{a_{1}J_{0}}
  &\frac{2b_{1}}{J_{0}}
  &0
  &0\\
  \frac{a_{2}}{J_{0}}
  &\frac{b_{1}b_{2}}{a_{1}J_{0}}
  &\frac{2b_{2}}{J_{0}}
  &0
  &0\\
 \frac{a_{1}a_{2}(b_{2}J_{1}-b_{1}J_{2})}{J_{0}^{3}}
 &\frac{2b_{1}b_{2}^{2}J_{1}}{J_{0}^{3}}-\frac{a_{2}b_{1}^{2}b_{2}J_{1}}{a_{1}J_{0}^{3}}-\frac{a_{2}b_{1}^{3}J_{2}}{a_{1}J_{0}^{3}}
  &\frac{2(a_{1}b_{2}^{2}J_{1}-a_{2}b_{1}^{2}J_{2})}{J_{0}^{3}}
 &\frac{a_{1}^{2}b_{2}}{J_{0}^{2}}
 &\frac{-b_{1}b_{2}^{2}}{J_{0}^{2}}\\
 0
 &\frac{1}{a_{1}}
 &0
 &0
 &0\\
 \frac{a_{1}a_{2}(-a_{2}J_{1}+a_{1}J_{2})}{J_{0}^{3}}
 &\frac{-2a_{2}b_{1}b_{2}J_{1}}{J_{0}^{3}}+\frac{a_{2}^{2}b_{1}^{2}J_{1}}{a_{1}J_{0}^{3}}+\frac{a_{2}b_{1}^{2}J_{2}}{J_{0}^{3}}
 &\frac{2a_{1}a_{2}(-b_{2}J_{1}+b_{1}J_{2})}{J_{0}^{3}}
 &\frac{-a_{1}^{2}a_{2}}{J_{0}^{2}}
 &\frac{a_{1}b_{2}^{2}}{J_{0}^{2}}\\
\end{array}\right)$$
and ${\bf \tilde{D}}^{-1}=\mbox{diag}\left(
1,1,1, \frac{a_{1}^{4}+2a_{1}^{2}b_{1}^{2}+b_{1}^{4}}{a_{1}^{4}},\frac{a_{2}^{4}+2a_{2}^{2}b_{2}^{2}+b_{2}^{4}}{b_{2}^{4}}\right)$.
This relation, together with Lemma \ref{zhang}, (\ref{a1b2-hK}) and (\ref{JK-hK}), imply
\begin{equation*}
\sum_{1\leq i\leq 
5}(\beta^{v}_{i})^{2}\lesssim  h_K^2\sum_{1\leq i\leq
5}(\beta^{\tau,v}_{i})^{2}.
\end{equation*}
Combining  this inequality with  Lemmas \ref{epsilon-v}-\ref{PS-stress-ineq} and (\ref{JK-hK}), we arrive at
\begin{eqnarray*}
\|\boldsymbol{\tau}_{v}\|_{0,K}^{2}\gtrsim \|\varepsilon(\mathbf{v})\|_{0,K}^{2}.
\end{eqnarray*}
This inequality, together (\ref{tau-eps}), shows the conclusion.
\end{proof}


\begin{theorem}
 Let the partition $T_{h}$ satisfy the shape-regularity condition (\ref{partition condition}). Then
 the uniform discrete inf-sup condition (${A2}_{h}$) holds with 
 $\Sigma_{h}= \Sigma_{h}^{PS}.$
\end{theorem}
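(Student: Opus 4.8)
The plan is to obtain (A2$_h$) for $\Sigma_{h}=\Sigma_{h}^{PS}$ as an almost immediate consequence of the element-wise construction in Lemma \ref{tau-epsilon-relation}, which already carries all the technical weight. Given an arbitrary $\mathbf{v}\in V_{h}$, I would take the stress $\boldsymbol{\tau}_{v}\in\Sigma_{h}^{PS}$ furnished by that lemma. Because the space $\Sigma_{h}^{PS}$ is defined element by element, and because both the pairing $\int_{\Omega}\boldsymbol{\tau}:\varepsilon(\mathbf{v})\,d\mathbf{x}$ and the squared norm $\|\boldsymbol{\tau}\|_{0}^{2}$ split into sums over $K\in T_{h}$, I would sum the two element-level relations of Lemma \ref{tau-epsilon-relation} over the mesh. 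Summing the identity $\int_{K}\boldsymbol{\tau}_{v}:\varepsilon(\mathbf{v})\,d\mathbf{x}=\|\boldsymbol{\tau}_{v}\|_{0,K}^{2}$ gives the global identity $\int_{\Omega}\boldsymbol{\tau}_{v}:\varepsilon(\mathbf{v})\,d\mathbf{x}=\|\boldsymbol{\tau}_{v}\|_{0}^{2}$, while summing the lower bound $\|\boldsymbol{\tau}_{v}\|_{0,K}^{2}\gtrsim\|\varepsilon(\mathbf{v})\|_{0,K}^{2}$ produces $\|\boldsymbol{\tau}_{v}\|_{0}^{2}\gtrsim\|\varepsilon(\mathbf{v})\|_{0}^{2}$.

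Next I would evaluate the Rayleigh quotient at this single competitor $\boldsymbol{\tau}_{v}$. Using the global identity to cancel one factor of $\|\boldsymbol{\tau}_{v}\|_{0}$, and then the summed lower bound, I obtain
$$\frac{\int_{\Omega}\boldsymbol{\tau}_{v}:\varepsilon(\mathbf{v})\,d\mathbf{x}}{\|\boldsymbol{\tau}_{v}\|_{0}}=\|\boldsymbol{\tau}_{v}\|_{0}\gtrsim\|\varepsilon(\mathbf{v})\|_{0}.$$
Since $\boldsymbol{\tau}_{v}$ is one admissible choice in $\Sigma_{h}^{PS}\setminus\{0\}$ (for $\mathbf{v}\neq 0$ one has $\varepsilon(\mathbf{v})\neq 0$ by Korn on $V=H_{D}^{1}(\Omega)^{2}$, hence $\boldsymbol{\tau}_{v}\neq 0$; the case $\mathbf{v}=0$ is trivial), the supremum over all $0\neq\boldsymbol{\tau}\in\Sigma_{h}^{PS}$ is bounded below by $\|\varepsilon(\mathbf{v})\|_{0}$ up to a uniform constant. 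Finally I would invoke the equivalence of the $L^{2}$-norm of the strain $\|\varepsilon(\mathbf{v})\|_{0}$ with the seminorm $|\mathbf{v}|_{1}$ on $V$ — the same Korn-type equivalence used at the end of the proof of Theorem 2.1 — to replace $\|\varepsilon(\mathbf{v})\|_{0}$ by $|\mathbf{v}|_{1}$, which is precisely the assertion (A2$_h$).

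I do not anticipate any real obstacle at this level: the genuine difficulty, namely the explicit element-level construction of a stress in $\Sigma_{h}^{PS}$ that reproduces $\|\varepsilon(\mathbf{v})\|_{0,K}$ under the bilinear pairing and is two-sided comparable to it \emph{uniformly} in $h$, has already been isolated into Lemma \ref{tau-epsilon-relation} and the supporting Lemmas \ref{epsilon-v}--\ref{PS-stress-ineq}. What remains is a routine summation over elements followed by one application of Korn's inequality. The generic constants stay independent of $\lambda$ automatically, since neither $\boldsymbol{\tau}_{v}$, nor the pairing $\int_{\Omega}\boldsymbol{\tau}:\varepsilon(\mathbf{v})\,d\mathbf{x}$, nor the Korn equivalence involves $\lambda$.
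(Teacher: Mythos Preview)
Your proposal is correct and follows essentially the same approach as the paper: both take the element-wise competitor $\boldsymbol{\tau}_{v}$ from Lemma~\ref{tau-epsilon-relation}, sum the local identities and bounds over $K\in T_{h}$, and finish with the Korn-type equivalence $\|\varepsilon(\mathbf{v})\|_{0}\approx|\mathbf{v}|_{1}$ on $V$. Your write-up is in fact slightly more direct than the paper's, which inserts a Cauchy--Schwarz step before dividing by $\|\boldsymbol{\tau}_{v}\|_{0}$, but the content is the same.
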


\begin{proof} From Lemma \ref{tau-epsilon-relation}, 
 for any $\mathbf{v}\in V_{h}$,
 there exists $\boldsymbol{\tau}_v\in \Sigma_{h}^{PS}$ such that (\ref{tau-eps2}) holds. Then it holds
 \begin{eqnarray*}
\|\boldsymbol{\tau}_{v}\|_{0}|\mathbf{v}|_{1}&\lesssim&  \left(\sum_{K}\int_{K}\boldsymbol{\tau}_{v}:\boldsymbol{\tau}_{v}d\mathbf{x}\right)^{\frac{1}{2}}
 \left(\sum_{K}\int_{K}\varepsilon(\mathbf{v}):\varepsilon(\mathbf{v})d\mathbf{x}\right)^{\frac{1}{2}}\\
 &\lesssim&\sum_{K}\int_{K}\boldsymbol{\tau}_v:\boldsymbol{\tau}_v d\mathbf{x}\lesssim
  \int_{\Omega}\boldsymbol{\tau}_v:\varepsilon(\mathbf{v})d\mathbf{x},
   \end{eqnarray*}
where in the first inequality the equivalence of the seminorm $|\varepsilon(\cdot)|_{0}$ and the norm $||\cdot||_{1}$ on the space $V$ is used. Then the  uniform stability inequality $({A2}_{h})$ follows from
 \begin{eqnarray*}
 |\mathbf{v}|_{1}\lesssim  \frac{\int_{\Omega}\boldsymbol{\tau}_v:\varepsilon(\mathbf{v})d\mathbf{x}}{\|\boldsymbol{\tau}_v\|_{0}}\leq\sup_{\boldsymbol{\tau} \in \Sigma_{h}^{PS}}\frac{\int_{\Omega}\boldsymbol{\tau}:\varepsilon(\mathbf{v})d\mathbf{x}}{\|\boldsymbol{\tau}\|_{0}}
\ \,\,\, \mbox{for all } \mathbf{v}\in
 V_{h}.
 \end{eqnarray*}
\end{proof}

 Combining Theorem 4.1 and Theorem 4.2, we immediately
 have the following uniform error estimates.

\begin{theorem}
 Let $(\boldsymbol{\sigma},\mathbf{u})\in \Sigma\times V$ be the solution of the
 variational problem (\ref{weak1.a})(\ref{weak1.b}). Under  the same condition as in Lemma 4.1,  the discretization problem
 (\ref{discreteweak1.a})(\ref{discreteweak1.b}) admits a unique solution 
 $(\boldsymbol{\sigma}_{h},\mathbf{u}_{h})\in \Sigma_{h}^{PS}\times V_{h}$ such that 
 \begin{equation}\label{estimate-PS}
 \|\boldsymbol{\sigma}-\boldsymbol{\sigma}_{h}\|_{0}+|\mathbf{u}-\mathbf{u}_{h}|_{1}
 \lesssim
 \inf_{\boldsymbol{\tau}\in \Sigma_{h}^{PS}}\|\boldsymbol{\sigma}-\boldsymbol{\tau}\|_{0}+\inf_{\mathbf{v}\in
 V_{h}}|\mathbf{u}-\mathbf{v}|_{1}.
 \end{equation}
   In addition, let $p_{h}=-\frac{1}{2}tr\boldsymbol{\sigma}_{h}$ be the approximation of the pressure $p=-(\mu+\lambda)\mbox{div}\mathbf{u}=-\frac{1}{2}tr\boldsymbol{\sigma}$,  then it holds
 \begin{equation}
||p-p_{h}||_{0}\lesssim  \inf_{\boldsymbol{\tau}\in \Sigma_{h}^{PS}}\|\boldsymbol{\sigma}-\boldsymbol{\tau}\|_{0}+\inf_{\mathbf{v}\in
 V_{h}}|\mathbf{u}-\mathbf{v}|_{1}.
\end{equation}   

\end{theorem}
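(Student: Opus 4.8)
The plan is to observe that (\ref{discreteweak1.a})--(\ref{discreteweak1.b}) is a saddle-point problem to which the abstract theory of mixed finite element methods \cite{Brezzi1974,Brezzi-Fortin1991} applies directly, so that the only real task is to assemble the hypotheses that have already been verified uniformly in $\lambda$. First I would collect the four ingredients: the continuity of $a(\cdot,\cdot)$ in (\ref{a-continuity}), the continuity of the coupling form $\int_{\Omega}\boldsymbol{\tau}:\varepsilon(\mathbf{v})d\mathbf{x}$ in (\ref{b-continuity}), the discrete kernel-coercivity $(\mathrm{A1}_{h})$ established in Theorem 4.1, and the discrete inf-sup condition $(\mathrm{A2}_{h})$ established in Theorem 4.2. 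Since each of these holds with a constant independent of $h$ and $\lambda$, the abstract theory furnishes existence and uniqueness of a discrete solution $(\boldsymbol{\sigma}_{h},\mathbf{u}_{h})\in\Sigma_{h}^{PS}\times V_{h}$.

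For the quasi-optimal bound (\ref{estimate-PS}) I would invoke the standard mixed-method quasi-optimality estimate, which under $(\mathrm{A1}_{h})$, $(\mathrm{A2}_{h})$ and the two continuity conditions guarantees
$$\|\boldsymbol{\sigma}-\boldsymbol{\sigma}_{h}\|_{0}+|\mathbf{u}-\mathbf{u}_{h}|_{1}\lesssim\inf_{\boldsymbol{\tau}\in\Sigma_{h}^{PS}}\|\boldsymbol{\sigma}-\boldsymbol{\tau}\|_{0}+\inf_{\mathbf{v}\in V_{h}}|\mathbf{u}-\mathbf{v}|_{1},$$
the hidden constant being a function only of the continuity and stability constants. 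The single point needing attention is to check that none of these constants blows up as $\lambda\to\infty$: the continuity constant of $a$ stays bounded by $1/(2\mu)$ because the term $\frac{1}{4(\mu+\lambda)}tr\boldsymbol{\sigma}\,tr\boldsymbol{\tau}$ is uniformly small, while the kernel-coercivity and inf-sup constants are uniform by Theorems 4.1 and 4.2; this uniformity is exactly the locking-free content of the estimate.

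Finally, for the pressure I would use the identity $p-p_{h}=-\tfrac12\,tr(\boldsymbol{\sigma}-\boldsymbol{\sigma}_{h})$ together with the elementary pointwise bound $(tr\boldsymbol{\tau})^{2}\le 2\,\boldsymbol{\tau}:\boldsymbol{\tau}$, valid for any symmetric tensor, to deduce $\|p-p_{h}\|_{0}\le\tfrac{1}{\sqrt2}\|\boldsymbol{\sigma}-\boldsymbol{\sigma}_{h}\|_{0}$; combining this with the stress estimate just proved gives the stated pressure bound. I do not anticipate a genuine obstacle in this theorem, since all the hard analysis has already been concentrated in the uniform verification of $(\mathrm{A1}_{h})$ and $(\mathrm{A2}_{h})$; what remains is a bookkeeping application of the saddle-point theory and a one-line estimate for the trace.
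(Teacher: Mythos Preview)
Your proposal is correct and matches the paper's own approach: the paper states only that ``Combining Theorem 4.1 and Theorem 4.2, we immediately have the following uniform error estimates,'' deferring everything to the abstract saddle-point theory of \cite{Brezzi1974,Brezzi-Fortin1991}. Your write-up is in fact more explicit than the paper, spelling out the uniformity of the continuity constants and supplying the one-line trace bound for the pressure estimate, which the paper leaves entirely implicit.
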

\begin{remark}
Here we recall that ``$\lesssim$'' denotes ``$\leq C$ ''with $C$ a positive constant independent of $\lambda$ and   $h$.
\end{remark}

\begin{remark}
From the standard interpolation theory, the right side terms of (\ref{estimate-PS}) can be further bounded from above by $ Ch(||\boldsymbol{\sigma}||_{1}+||\mathbf{u}||_{2}).$
\end{remark}

\subsection*{4.2. Error analysis  for ECQ4}

Since  the stress mode of   ECQ4  is actually a modified version of PS's with a perturbation term (see Remark 3.4), the stability analysis for ECQ4 can be carried out by following  a  similar routine.  However, due to the coupling of the constant term with  higher order terms,  we need to introduce the mesh condition proposed by Shi  \cite{Shi1984} (Figure 4):

 \noindent {{\bf Condition (A)}} 
 The distance $d_{K}(d_{K}=2\sqrt{a_{12}^{2}+b_{12}^{2}})$ between the midpoints of the diagonals of $K\in T_{h}$
 (Figure 2)
 is of order $o(h_{K})$ uniformly for all elements $K$ as $h\rightarrow
 0$.
 

%
 
\begin{figure}[!h]
 \vspace{2.5cm}
\begin{center}
\setlength{\unitlength}{1cm}
\begin{picture}(7,3)
\put(2,2.2){\line(6,1){3.8}}     \put(2,2.2){\line(1,5){0.4}}
\put(2.4,4.2){\line(2,1){2}}     \put(4.4,5.2){\line(3,-5){1.4}}
\put(2,2.2){\circle*{0.15}}      \put(2.4,4.2){\circle*{0.15}}
\put(4.4,5.2){\circle*{0.15}}    \put(5.8,2.8){\circle*{0.15}}
\put(1.7,1.9){\bf{$Z_1$}}        \put(5.8,2.5){\bf{$Z_2$}}
\put(4.4,5.4){\bf{$Z_3$}}        \put(2.1,4.4){\bf{$Z_4$}}
\put(2,2.2){\line(4,5){2.4}}     \put(2.4,4.2){\line(5,-2){3.4}}
\put(3.2,3.7){\circle*{0.15}}    \put(4.1,3.52){\circle*{0.15}}
\put(3.2,3.7){\line(5,-1){0.9}}  \put(2.7,3.7){\bf{$O_1$}}
\put(4.2,3.6){\bf{$O_2$}}        \put(3.5,3.3){\bf{$d_k$}}
\end{picture}
\end{center}
\vspace{-2.3cm} \caption{The distance $d_{K}$}
\end{figure}
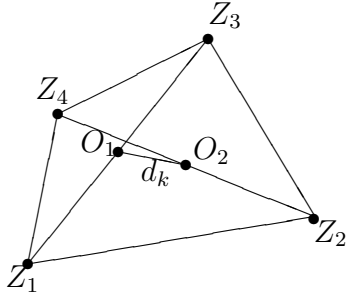

For the uniform discrete kernel-coercivity $({A1}_{h})$ we need the following lemma.

\begin{lemma}
 (\cite{Zhang1997})
  Let the partition $T_{h}$ satisfy (\ref{partition condition}). Then for
 any $q\in W_{h}$ and $\mathbf{v}\in V_{h}$, there exists $\mathbf{v}^{b}\in B_{h}$ such that
 \begin{equation}\label{quote1}
 \int_{\Omega}(q-\Pi_{0}q)\ (\mathbf{div} \mathbf{v}+\tilde{\mathbf{div}}
 \mathbf{v}^{b})d\mathbf{x}=\|q-\Pi_{0}q\|_{0}^{2},
 \end{equation}
 \begin{equation}\label{quote2}
 |\mathbf{v}^{b}|_{1,h}^{2}\lesssim \|q-\Pi_{0}q\|_{0}^{2}+2|\mathbf{v}|_{1}^{2},
 \end{equation}
 where   $\Pi_{0}:L^{2}(\Omega)\rightarrow \bar{W}_{h}$ is defined by
 $\Pi_{0}q|_{K}:=\frac{1}{4}\int_{K}J_{K}^{-1}qd\mathbf{x}$. 
\end{lemma}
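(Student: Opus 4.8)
The plan is to pull everything back to the reference square via $F_K$ and to build $\mathbf{v}^b$ element by element, exploiting that both $\tilde{\mathbf{div}}\mathbf{v}^b$ and its pairing against $q-\Pi_0 q$ decouple across the mesh. First I would make $\Pi_0$ explicit. Writing $\hat q = q|_K\circ F_K = c_0+c_1\xi+c_2\eta\in P_1(\hat K)$ and using $d\mathbf{x}=J_K\,d\xi\,d\eta$, the weight $J_K^{-1}$ in the definition of $\Pi_0$ cancels the Jacobian, so $\Pi_0 q|_K=\tfrac14\int_{\hat K}\hat q\,d\xi\,d\eta=c_0$ and hence $(q-\Pi_0 q)\circ F_K=c_1\xi+c_2\eta$. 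A direct integration, in which the odd-order contributions of $J_K=J_0+J_1\xi+J_2\eta$ vanish by symmetry, then gives the clean identity $\|q-\Pi_0 q\|_{0,K}^2=\tfrac43 J_0(c_1^2+c_2^2)$.

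Next I would compute the two divergence pairings on $\hat K$. Using the formula for $\tilde{\mathbf{div}}\mathbf{v}^b$ together with $\hat{\mathbf{v}}^b$ from (\ref{vB-form}), one finds $J_K\,\tilde{\mathbf{div}}\mathbf{v}^b\circ F_K=P\xi+Q\eta$ with $P:=b_2 u_\xi-a_2 v_\xi$ and $Q:=a_1 v_\eta-b_1 u_\eta$, whence
$$\int_K(q-\Pi_0 q)\,\tilde{\mathbf{div}}\mathbf{v}^b\,d\mathbf{x}=\tfrac43(c_1 P+c_2 Q).$$
Similarly, reading off the linear part $D_1\xi+D_2\eta$ of $J_K\,\mathbf{div}\mathbf{v}\circ F_K$ from (\ref{epsilon-new}) yields $\int_K(q-\Pi_0 q)\,\mathbf{div}\mathbf{v}\,d\mathbf{x}=\tfrac43(c_1 D_1+c_2 D_2)$, the constant part dropping out against the mean-zero factor $c_1\xi+c_2\eta$.

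With these identities the construction is transparent: to enforce (\ref{quote1}) it suffices, element by element, to choose $P=J_0 c_1-D_1$ and $Q=J_0 c_2-D_2$, which I realize by setting $u_\xi=P/b_2$, $v_\eta=Q/a_1$ and $v_\xi=u_\eta=0$ (legitimate since $a_1\approx b_2\approx h_K>0$). Indeed this gives $\tfrac43(c_1 P+c_2 Q)+\tfrac43(c_1 D_1+c_2 D_2)=\tfrac43 J_0(c_1^2+c_2^2)=\|q-\Pi_0 q\|_{0,K}^2$ on each $K$, and summing over $K$ recovers (\ref{quote1}).

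Finally, for the stability bound (\ref{quote2}) I would use the scaling equivalence $|\mathbf{v}^b|_{1,K}^2\approx u_\xi^2+u_\eta^2+v_\xi^2+v_\eta^2$, of which only the upper bound is needed and follows from the entries of $(DF_K)^{-1}$ being of order $h_K^{-1}$ together with $J_K\approx h_K^2$ (Lemma \ref{zhang}). With the above choice, $|\mathbf{v}^b|_{1,K}^2\lesssim (P^2+Q^2)/h_K^2\lesssim J_0^2(c_1^2+c_2^2)/h_K^2+(D_1^2+D_2^2)/h_K^2$. Since $J_0\approx h_K^2$, the first term is $\approx J_0(c_1^2+c_2^2)\approx\|q-\Pi_0 q\|_{0,K}^2$; and since $D_1,D_2$ are the linear coefficients of $J_K\,\mathbf{div}\mathbf{v}\circ F_K$, a reverse scaling gives $D_1^2+D_2^2\lesssim h_K^2\,\|\mathbf{div}\,\mathbf{v}\|_{0,K}^2\le 2h_K^2\,|\mathbf{v}|_{1,K}^2$, so the second term is $\lesssim|\mathbf{v}|_{1,K}^2$ (the factor $2$ in (\ref{quote2}) tracing back to $\|\mathbf{div}\,\mathbf{v}\|_{0,K}^2\le 2|\mathbf{v}|_{1,K}^2$). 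Summing over $K$ yields (\ref{quote2}). The main obstacle is not any single computation but ensuring that every implied constant depends only on the shape-regularity parameter $\varrho$ and not on the individual element geometry; this is precisely where Lemma \ref{zhang} and the normalizations (\ref{a1b2-hK})--(\ref{JK-hK}) must be invoked carefully, in particular to interchange $J_0$, $\max_{\hat K}J_K$ and $h_K^2$ and to control $|a_2|,|b_1|$ against $h_K$.
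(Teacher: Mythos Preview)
The paper does not prove this lemma; it is quoted from \cite{Zhang1997} without argument, so there is no in-paper proof to compare against. Your reconstruction is correct and follows what is essentially the only natural route: pull back to $\hat K$, observe that both $(q-\Pi_0 q)\circ F_K=c_1\xi+c_2\eta$ and $J_K\,\tilde{\mathbf{div}}\,\mathbf{v}^b\circ F_K=P\xi+Q\eta$ are pure linear forms, so that matching (\ref{quote1}) reduces to a $2$-by-$2$ linear system per element, and then control the bubble seminorm by the shape-regularity scalings of Lemma~\ref{zhang}. Your particular solution $u_\xi=P/b_2$, $v_\eta=Q/a_1$, $v_\xi=u_\eta=0$ is a clean choice among the one-parameter family of solutions (the system $P=b_2u_\xi-a_2v_\xi$, $Q=a_1v_\eta-b_1u_\eta$ is underdetermined), and the stability estimate via $J_0\approx h_K^2$ and $D_1^2+D_2^2\lesssim h_K^2\|\mathbf{div}\,\mathbf{v}\|_{0,K}^2$ is exactly the right mechanism. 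One minor comment: since (\ref{quote2}) is stated with $\lesssim$, the explicit factor $2$ carries no independent content, so your tracing of it to $\|\mathbf{div}\,\mathbf{v}\|_{0}^2\le 2|\mathbf{v}|_1^2$ is a plausible reading but not something to insist on.
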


We immediately have the following result.
\begin{lemma}
  Let the partition $T_{h}$ satisfy (\ref{partition condition}) and  {\bf Condition (A)}. Then
 it holds
\begin{equation}
(1-o(1)) \|q\|_{0}\lesssim \sup _{\mathbf{v}\in V_{h}, \mathbf{v}^{b}\in B_{h}}\frac{\int_{\Omega}q\ (\mathbf{div} \mathbf{v}+\mathbf{div}_{h}
 \mathbf{v}^{b})d\mathbf{x}}{|\mathbf{v}+\mathbf{v}^{b}|_{1,h}}\, \ \ \mbox{for all }
 q \in W_{h},
\end{equation}
where $o(1)$ means $o(1)\rightarrow0$ as $h\rightarrow0$, and $\mathbf{div}_{h}$ denotes piecewise divergence with respect to $T_{h}$.
\end{lemma}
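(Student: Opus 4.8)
The plan is to derive the asserted estimate from the modified-divergence inf-sup by trading $\tilde{\mathbf{div}}\mathbf{v}^{b}$ for the genuine piecewise divergence $\mathbf{div}_{h}\mathbf{v}^{b}$ and absorbing the discrepancy via {\bf Condition (A)}. First, for a given $q\in W_{h}$ I would produce a pair $\mathbf{v}\in V_{h}$, $\mathbf{v}^{b}\in B_{h}$ realizing
\[
\|q\|_{0}^{2}\lesssim\int_{\Omega} q\,(\mathbf{div}\mathbf{v}+\tilde{\mathbf{div}}\mathbf{v}^{b})\,d\mathbf{x},\qquad |\mathbf{v}+\mathbf{v}^{b}|_{1,h}\lesssim\|q\|_{0}.
\]
This is exactly what the proof of Lemma 4.1 delivers; equivalently, one controls the elementwise-constant part $\Pi_{0}q$ by the $Q_{1}$-$P_{0}$ stability and the oscillation $q-\Pi_{0}q$ by Lemma 4.5, noting that every bubble has elementwise mean-zero modified divergence ($\int_{K}\tilde{\mathbf{div}}\mathbf{v}^{b}\,d\mathbf{x}=0$, since $\int_{\hat K}\xi\,d\xi d\eta=\int_{\hat K}\eta\,d\xi d\eta=0$), so the constant and oscillation contributions decouple and add up to $\|q\|_{0}^{2}$. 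The only genuinely new task is then the passage from $\tilde{\mathbf{div}}\mathbf{v}^{b}$ to $\mathbf{div}_{h}\mathbf{v}^{b}$.

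The crux, and the step I expect to be the main obstacle, is the elementwise bound
\[
\|\mathbf{div}_{h}\mathbf{v}^{b}-\tilde{\mathbf{div}}\mathbf{v}^{b}\|_{0,K}\lesssim\frac{\sqrt{a_{12}^{2}+b_{12}^{2}}}{h_{K}}\,|\mathbf{v}^{b}|_{1,K}=\frac{d_{K}}{2h_{K}}\,|\mathbf{v}^{b}|_{1,K}.
\]
To establish it I would subtract the two definitions on $K$: the true derivatives use the full inverse Jacobian while $\tilde{\mathbf{div}}$ freezes it at $(\xi,\eta)=(0,0)$, so the surviving terms carry the factors $b_{12}$ and $a_{12}$,
\[
\frac{\partial u^{b}}{\partial x}-\frac{\tilde{\partial} u^{b}}{\partial x}=\frac{b_{12}}{J_{K}}\bigl(\xi\,\partial_{\xi}\hat u^{b}-\eta\,\partial_{\eta}\hat u^{b}\bigr),\qquad \frac{\partial v^{b}}{\partial y}-\frac{\tilde{\partial} v^{b}}{\partial y}=\frac{a_{12}}{J_{K}}\bigl(-\xi\,\partial_{\xi}\hat v^{b}+\eta\,\partial_{\eta}\hat v^{b}\bigr).
\]
Substituting $\partial_{\xi}\hat u^{b}=u_{\xi}\xi$ and $\partial_{\eta}\hat u^{b}=u_{\eta}\eta$ from (\ref{vB-form}), using $J_{K}\approx h_{K}^{2}$ from (\ref{JK-hK}) and the bubble bound $|u_{\xi}|+|u_{\eta}|+|v_{\xi}|+|v_{\eta}|\lesssim|\mathbf{v}^{b}|_{1,K}$, and changing variables to $\hat K$, the displayed estimate follows. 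Summing over $K$ and invoking {\bf Condition (A)} ($d_{K}=o(h_{K})$ uniformly) yields $\|\mathbf{div}_{h}\mathbf{v}^{b}-\tilde{\mathbf{div}}\mathbf{v}^{b}\|_{0}\lesssim o(1)\,|\mathbf{v}^{b}|_{1,h}$; it is precisely here that {\bf Condition (A)} enters and renders the perturbation negligible, in contrast with the PS element, which needs no such assumption.

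Finally I would assemble the estimate. Splitting
\[
\int_{\Omega} q\,(\mathbf{div}\mathbf{v}+\mathbf{div}_{h}\mathbf{v}^{b})\,d\mathbf{x}=\int_{\Omega} q\,(\mathbf{div}\mathbf{v}+\tilde{\mathbf{div}}\mathbf{v}^{b})\,d\mathbf{x}+\int_{\Omega} q\,(\mathbf{div}_{h}-\tilde{\mathbf{div}})\mathbf{v}^{b}\,d\mathbf{x},
\]
I bound the last integral by Cauchy--Schwarz together with the key estimate and $|\mathbf{v}^{b}|_{1,h}\lesssim|\mathbf{v}+\mathbf{v}^{b}|_{1,h}$, and use the lower bound from the first paragraph, to obtain
\[
\int_{\Omega} q\,(\mathbf{div}\mathbf{v}+\mathbf{div}_{h}\mathbf{v}^{b})\,d\mathbf{x}\gtrsim\|q\|_{0}^{2}-o(1)\,\|q\|_{0}\,|\mathbf{v}+\mathbf{v}^{b}|_{1,h}\gtrsim(1-o(1))\,\|q\|_{0}\,|\mathbf{v}+\mathbf{v}^{b}|_{1,h},
\]
the last step converting one factor $\|q\|_{0}$ via $|\mathbf{v}+\mathbf{v}^{b}|_{1,h}\lesssim\|q\|_{0}$. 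Dividing by $|\mathbf{v}+\mathbf{v}^{b}|_{1,h}$ and passing to the supremum over $\mathbf{v}\in V_{h}$, $\mathbf{v}^{b}\in B_{h}$ gives $(1-o(1))\|q\|_{0}\lesssim\sup(\cdots)$, which is the claim.
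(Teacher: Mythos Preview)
Your argument is correct and follows the same overall scaffold as the paper: construct the pair $(\mathbf{v},\mathbf{v}^{b})$ from the $Q_{1}$--$P_{0}$ stability together with the Zhang oscillation lemma (the paper's Lemma~4.5), then control the discrepancy between $\tilde{\mathbf{div}}\mathbf{v}^{b}$ and $\mathbf{div}_{h}\mathbf{v}^{b}$ using {\bf Condition~(A)}. The one genuine difference lies in how that discrepancy is bounded. You estimate the full $L^{2}$-norm $\|\mathbf{div}_{h}\mathbf{v}^{b}-\tilde{\mathbf{div}}\mathbf{v}^{b}\|_{0,K}\lesssim (d_{K}/h_{K})|\mathbf{v}^{b}|_{1,K}$ and apply Cauchy--Schwarz against $q$. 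The paper instead exploits the orthogonality $\int_{K}(q-\Pi_{0}q)(\mathbf{div}-\tilde{\mathbf{div}})\mathbf{v}^{b}\,d\mathbf{x}=0$ (the difference is a combination of $\xi^{2},\eta^{2}$ divided by $J_{K}$, which integrates to zero against $\xi,\eta$), so only the term $\int_{\Omega}\Pi_{0}q\cdot\mathbf{div}\mathbf{v}^{b}\,d\mathbf{x}$ survives; that term is computed explicitly as $\sum_{K}\tfrac{4}{3}\bigl(b_{12}(u_{\xi}-u_{\eta})+a_{12}(v_{\eta}-v_{\xi})\bigr)\,\Pi_{0}q|_{K}$ and bounded directly. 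Your route is more elementary and does not need that orthogonality observation, at the cost of a slightly coarser estimate; the paper's route isolates exactly the single term where the $a_{12},b_{12}$ contributions appear. Both yield the same $(1-o(1))$ conclusion under {\bf Condition~(A)}.
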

\begin{proof}
 For any $q\in W_{h}$,  we can write  
 $$
q|_{K}\circ F_{K}=q_{0}^{K}+q_{1}^{K}\xi+q_{2}^{K}\eta.
 $$
 Then it is easy to know that
 $
 \Pi_{0}q|_{K}=q_{0}^{K}. 
 $ 
 
 By Lemma 4.1, there exists $\mathbf{v}\in V_{h}$ such that  (\ref{Q1-P0infsup1}) hold with 
 $\bar{q}=\Pi_{0}q$.  On the other hand, from Lemma 4.3 there exists $\mathbf{v}^{b}$ satisfying (\ref{quote1})(\ref{quote2}).
 
 Since it holds the relations
\begin{equation*}
  \int_{\Omega}\Pi_{0}q\ \tilde{\mbox{div}} \mathbf{v}^{b}d\mathbf{x}=0,
\end{equation*}
and
\begin{equation*}
\int_{\Omega} (q-\Pi_{0}q)\ \mbox{div} \mathbf{v}^{b}d\mathbf{x} =\int_{\Omega}(q-\Pi_{0}q)\ \tilde{\mbox{div}} \mathbf{v}^{b}d\mathbf{x},
\end{equation*}
 it follows from (\ref{quote1}), (\ref{quote2}),  (\ref{Q1-P0infsup1}) that
\begin{equation}\label{div-v+vb}
\begin{array}{ll}
 & |\mathbf{v}+\mathbf{v}^{b}|_{1,h}\|q\|_{0}+\int_{\Omega}\Pi_{0}q\ \mbox{div} \mathbf{v}^{b}d\mathbf{x}\\
  \lesssim & ||q-\Pi_{0}q||_{0}^{2}+||\Pi_{0}q||_{0}^{2}+\int_{\Omega}\Pi_{0}q\ \mbox{div} \mathbf{v}^{b}d\mathbf{x}\\
 \lesssim&\int_{\Omega}(q-\Pi_{0}q)\ (\mbox{div} \mathbf{v}+\tilde{\mbox{div}} \mathbf{v}^{b})d\mathbf{x}+\int_{\Omega}\Pi_{0}q\ \mbox{div} \mathbf{v}d\mathbf{x}+\int_{\Omega}\Pi_{0}q\ \mbox{div} \mathbf{v}^{b}d\mathbf{x}\\
=&\int_{\Omega}q\ (\mbox{div} \mathbf{v}+\mbox{div} \mathbf{v}^{b})d\mathbf{x}.
 \end{array}
 \end{equation}
For the second term in the first line of (\ref{div-v+vb}), from (\ref{vB-form}), Remark 3.6, (\ref{partition condition}), {\bf Condition (A)}, and Remark 4.1, we have
\begin{eqnarray*}
 |\int_{\Omega}\Pi_{0}q\ \mbox{div} \mathbf{v}^{b}d\mathbf{x}|
 &=& \sum\limits_{K\in T_{h}}\frac{4}{3}|(b_{12}(u_{\xi}-u_{\eta})+a_{12}(v_{\eta}-v_{\xi})) |\Pi_{0}q|_{K}|\\
 &\lesssim &\sum\limits_{K\in T_{h}}\frac{|b_{12}|+|a_{12}|}{h_{K}}(|u_{\xi}|+|u_{\eta}|+|v_{\eta}|+|v_{\xi}|)||\Pi_{0}q||_{0,K}\\
 &\leq &\sum\limits_{K\in T_{h}} o(1)|\mathbf{v}+\mathbf{v}^{b}|_{1,K}\|\Pi_{0}q\|_{0,K}
 \leq o(1)|\mathbf{v}+\mathbf{v}^{b}|_{1,h}\|q\|_{0},
\end{eqnarray*}
which, together with (\ref{div-v+vb}), yields the desired result.
\end{proof}


 From Lemmas 4.4 and 4.5 we know that, under the assumptions in the lemmas, the inf-sup condition 
 \begin{equation}
\|q\|_{0}\lesssim \sup _{\mathbf{v}\in V_{h}, \mathbf{v}^{b}\in B_{h}}\frac{(\mbox{div} \mathbf{v}+\mbox{div}
 \mathbf{v}^{b},q)}{|\mathbf{v}+\mathbf{v}^{b}|_{1,h}} \, \ \ \mbox{for all }
 q \in W_{h}
\end{equation}
holds
  when the mesh size $h$ is small enough.
    
 Therefore, following the same routine as in the proof of  Theorem 4.2, we arrive at the following  result.
\begin{theorem}
 Under {\bf Condition (A)} and the same conditions as in Lemma 4.1,
 the uniform discrete kernel-coercivity condition ($\mathrm{A1}_{h}$)
 holds for ECQ4 with $\Sigma_{h}=\Sigma_{h}^{EC}$ and sufficiently small mesh size $h$.
\end{theorem}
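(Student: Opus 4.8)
The plan is to mirror the proof of Theorem 4.1 for the PS element, replacing the modified-divergence inf-sup supplied by Lemma 4.1 with its ordinary-divergence counterpart assembled from Lemmas 4.5 and 4.6, which holds only for sufficiently small $h$ under {\bf Condition (A)}. Since
$$a(\boldsymbol{\tau},\boldsymbol{\tau})=\int_{\Omega}\left(\frac{1}{2\mu}\boldsymbol{\tau}^{D}:\boldsymbol{\tau}^{D}+\frac{1}{4(\mu+\lambda)}(tr\boldsymbol{\tau})^{2}\right)d\mathbf{x},$$
the coercivity $\|\boldsymbol{\tau}\|_{0}^{2}\lesssim a(\boldsymbol{\tau},\boldsymbol{\tau})$ on the discrete kernel $Z_{h}$ (now taken inside $\Sigma_{h}^{EC}$) reduces, exactly as before, to the single estimate $\|tr\boldsymbol{\tau}\|_{0}\lesssim\|\boldsymbol{\tau}^{D}\|_{0}$ for all $\boldsymbol{\tau}\in Z_{h}$. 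First I would record that for the ECQ4 mode each component satisfies $\hat{\boldsymbol{\tau}}_{ij}\in P_{1}(\hat{K})$, so that $tr\boldsymbol{\tau}\in W_{h}$ and the inf-sup lemmas of this subsection are applicable to $q=tr\boldsymbol{\tau}$.

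The decisive structural point is the choice of compatibility condition. Whereas the PS argument inserted the modified-strain identity (\ref{PSconstraint}) and thereby produced the modified divergence $\tilde{\mbox{div}}\mathbf{v}^{b}$, here I would instead invoke the energy-compatibility condition (\ref{ECconstraint}), which gives $\int_{K}\boldsymbol{\tau}:\varepsilon(\mathbf{v}^{b})d\mathbf{x}=0$ with the \emph{ordinary} strain. Consequently, for $\boldsymbol{\tau}\in Z_{h}$, any $\mathbf{v}\in V_{h}$ and any $\mathbf{v}^{b}\in B_{h}$,
$$0=\int_{\Omega}\boldsymbol{\tau}:\varepsilon(\mathbf{v})d\mathbf{x}=\int_{\Omega}\boldsymbol{\tau}:(\varepsilon(\mathbf{v})+\varepsilon(\mathbf{v}^{b}))d\mathbf{x}=\int_{\Omega}\tfrac{1}{2}tr\boldsymbol{\tau}(\mbox{div}\mathbf{v}+\mbox{div}\mathbf{v}^{b})d\mathbf{x}+\int_{\Omega}\boldsymbol{\tau}^{D}:(\varepsilon(\mathbf{v})+\varepsilon(\mathbf{v}^{b}))d\mathbf{x}.$$
Crucially, the divergence that now appears is the ordinary one, which is precisely the object controlled by the inf-sup inequality derived from Lemmas 4.5 and 4.6.

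With this identity in hand, I would apply that inf-sup inequality (valid once $h$ is small enough under {\bf Condition (A)}) to $q=tr\boldsymbol{\tau}\in W_{h}$ and substitute the relation above into its numerator:
$$\|tr\boldsymbol{\tau}\|_{0}\lesssim\sup_{\mathbf{v}\in V_{h},\,\mathbf{v}^{b}\in B_{h}}\frac{\int_{\Omega}tr\boldsymbol{\tau}\,(\mbox{div}\mathbf{v}+\mbox{div}\mathbf{v}^{b})d\mathbf{x}}{|\mathbf{v}+\mathbf{v}^{b}|_{1,h}}=\sup_{\mathbf{v}\in V_{h},\,\mathbf{v}^{b}\in B_{h}}\frac{-2\int_{\Omega}\boldsymbol{\tau}^{D}:(\varepsilon(\mathbf{v})+\varepsilon(\mathbf{v}^{b}))d\mathbf{x}}{|\mathbf{v}+\mathbf{v}^{b}|_{1,h}}\lesssim\|\boldsymbol{\tau}^{D}\|_{0},$$
where the final step is Cauchy--Schwarz together with the bound $\|\varepsilon(\mathbf{v})+\varepsilon(\mathbf{v}^{b})\|_{0,h}\lesssim|\mathbf{v}+\mathbf{v}^{b}|_{1,h}$ and the norm equivalence of Remark 4.1. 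This delivers $\|tr\boldsymbol{\tau}\|_{0}\lesssim\|\boldsymbol{\tau}^{D}\|_{0}$, hence the uniform $(\mathrm{A1}_{h})$ for $\Sigma_{h}=\Sigma_{h}^{EC}$.

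The main obstacle lies not in this final assembly, which is a near-verbatim transcription of Theorem 4.1, but in the legitimacy of the inf-sup inequality on which it rests. For PS the modified divergence lets Lemma 4.1 yield a mesh-uniform inf-sup on every shape-regular mesh; for ECQ4 the perturbation term of the stress mode (Remark 3.4) couples the constant and linear parts, so the ordinary divergence produces the degraded factor $(1-o(1))$ in Lemma 4.6. The heart of the difficulty --- estimating $|\int_{\Omega}\Pi_{0}q\,\mbox{div}\mathbf{v}^{b}d\mathbf{x}|$ by $o(1)\,|\mathbf{v}+\mathbf{v}^{b}|_{1,h}\|q\|_{0}$ via $d_{K}=2\sqrt{a_{12}^{2}+b_{12}^{2}}=o(h_{K})$ --- has already been discharged in Lemma 4.6 under {\bf Condition (A)}, and it is exactly this $o(1)$ term that forces the restriction to sufficiently small $h$. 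Everything downstream of that lemma then proceeds uniformly in $\lambda$.
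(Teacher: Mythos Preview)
Your proposal is correct and follows essentially the same approach as the paper: the paper also obtains Theorem 4.4 by first establishing the ordinary-divergence inf-sup inequality (your Lemma 4.6, absorbing the $(1-o(1))$ factor for small $h$) and then repeating verbatim the kernel-coercivity argument of Theorem 4.1 with $\varepsilon(\mathbf{v}^{b})$ in place of $\tilde{\varepsilon}(\mathbf{v}^{b})$ via the energy-compatibility condition (\ref{ECconstraint}). Your identification of the $o(1)$ estimate on $\int_{\Omega}\Pi_{0}q\,\mbox{div}\,\mathbf{v}^{b}\,d\mathbf{x}$ as the sole place where {\bf Condition (A)} and the smallness of $h$ enter is exactly the point the paper isolates.
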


Next we show the discrete inf-sup condition ($\mathrm{A2}_{h}$) holds for the ECQ4 finite element.
Notice that {\bf Condition (A)} states
 \begin{equation}
\max\{|a_{12}|,|b_{12}|\}=o(h_{K}), \ \ \max\{|J_{1}|,|J_{2}|\}=o(h_{K}^{2}).
\end{equation}
Recall  the element geometric properties (\ref{a1b2-hK})-(\ref{JK-hK}), namely 
\begin{equation}\label{element geometry}
a_{1}\approx b_{2}\approx h_{K},  \ \max\{a_{2},b_{1}\}\lesssim O(h_{K}), \ J_{0}\approx h_{K}^{2}.
\end{equation}
 This allows us to view   all the terms involving one of the factors $a_{12},b_{12},J_{1},J_{2}$   as higher-order terms. In this sense, the ECQ4 stress mode (\ref{ECQ4stress1}) is actually a higher-order oscillation of the PS stress mode (\ref{PSstress1}) (cf. Remark \ref{stress-osc}). Thus, under {\bf Condition (A)} Lemmas \ref{PS-stress-ineq}-\ref{tau-epsilon-relation} also hold for ECQ4 stress space $\Sigma_{h}^{EC}$. 
 
 As a result, 
  we have the following stability result for the ECQ4 finite element.
\begin{theorem}
 Let the partition $T_{h}$ satisfy the shape-regularity condition (\ref{partition condition}) and {\bf Condition (A)}. Then
 the uniform discrete inf-sup condition (${A2}_{h}$) holds with 
 $\Sigma_{h}=\Sigma_{h}^{EC}.$
\end{theorem}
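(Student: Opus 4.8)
The plan is to reproduce, line for line, the three-step route that delivered $(A2_h)$ for the PS element in Theorem 4.2, exploiting the observation recorded just above the statement that, under \textbf{Condition (A)}, the ECQ4 mode (\ref{ECQ4stress1}) is a higher-order oscillation of the PS mode (\ref{PSstress1}). First I would observe that Lemma \ref{epsilon-v} concerns only $\mathbf{v}\in V_h$ and the element geometry through (\ref{epsilon-new}); it contains no stress and therefore transfers verbatim, giving $\|\varepsilon(\mathbf{v})\|_{0,K}^2\lesssim(\min_{\hat K}J_K)^{-1}h_K^2\sum_i(\beta_i^v)^2$ for every $\mathbf{v}\in V_h$. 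The real work thus reduces to establishing the ECQ4 analogues of Lemma \ref{PS-stress-ineq} and Lemma \ref{tau-epsilon-relation} for the space $\Sigma_h^{EC}$.

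For the stress lower bound I would substitute (\ref{ECQ4stress1}) into $\|\boldsymbol{\tau}\|_{0,K}^2=\int_{\hat K}(\cdots)J_K\,d\xi\,d\eta$ exactly as in Lemma \ref{PS-stress-ineq}. By Remark \ref{stress-osc} the coefficient matrix is the PS matrix plus a correction each of whose entries carries a factor $a_{12}$ or $b_{12}$; dividing by $a_1\approx b_2\approx h_K$ from (\ref{element geometry}) and invoking $\max\{|a_{12}|,|b_{12}|\}=o(h_K)$ from \textbf{Condition (A)}, every correction coefficient is $o(1)$ as $h\to 0$. Hence the Gram-type quadratic form in $\beta^\tau$ equals its PS counterpart up to an $o(1)$ relative perturbation, and the coercivity $\|\boldsymbol{\tau}\|_{0,K}^2\gtrsim\min_{\hat K}J_K\sum_i(\beta_i^\tau)^2$ survives for $h$ small enough (with a slightly diminished but still uniform constant).

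For the key pairing lemma I would recompute $\int_K\boldsymbol{\tau}:\varepsilon(\mathbf{v})\,dx=(\beta^\tau)^T\mathbf{A}^{EC}\beta^v$ and $\|\boldsymbol{\tau}\|_{0,K}^2=J_K(\xi_0,\eta_0)(\beta^\tau)^T\mathbf{D}^{EC}\beta^\tau$, where $\mathbf{A}^{EC}$ and $\mathbf{D}^{EC}$ differ from the PS matrices $\mathbf{A}$, $\mathbf{D}$ only through entries proportional to $a_{12},b_{12},J_1,J_2$, i.e.\ by $o(1)$ relative perturbations. Defining $\boldsymbol{\tau}_v\in\Sigma_h^{EC}$ by $\beta^{\tau,v}=J_K(\xi_0,\eta_0)^{-1}(\mathbf{D}^{EC})^{-1}\mathbf{A}^{EC}\beta^v$ again forces $\int_K\boldsymbol{\tau}_v:\varepsilon(\mathbf{v})\,dx=\|\boldsymbol{\tau}_v\|_{0,K}^2$; inverting the relation and using (\ref{element geometry}) gives $\sum_i(\beta_i^v)^2\lesssim h_K^2\sum_i(\beta_i^{\tau,v})^2$ provided $\mathbf{A}^{EC}$ is boundedly invertible with the $h_K$-scaling of the PS inverse $\tilde{\mathbf{A}}^{-1}$ displayed in the proof of Lemma \ref{tau-epsilon-relation}. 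Combined with the transferred Lemma \ref{epsilon-v} and the ECQ4 version of Lemma \ref{PS-stress-ineq}, this yields $\|\boldsymbol{\tau}_v\|_{0,K}^2\gtrsim\|\varepsilon(\mathbf{v})\|_{0,K}^2$.

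With both lemmas in hand the theorem closes by the argument of Theorem 4.2: summing $\int_K\boldsymbol{\tau}_v:\varepsilon(\mathbf{v})\,dx=\|\boldsymbol{\tau}_v\|_{0,K}^2\gtrsim\|\varepsilon(\mathbf{v})\|_{0,K}^2$ over $K$, applying Cauchy--Schwarz, and using $|\varepsilon(\mathbf{v})|_0\approx|\mathbf{v}|_1$ on $V$ gives $|\mathbf{v}|_1\lesssim\int_\Omega\boldsymbol{\tau}_v:\varepsilon(\mathbf{v})\,dx/\|\boldsymbol{\tau}_v\|_0\le\sup_{\boldsymbol{\tau}\in\Sigma_h^{EC}}\int_\Omega\boldsymbol{\tau}:\varepsilon(\mathbf{v})\,dx/\|\boldsymbol{\tau}\|_0$. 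The step I expect to be the main obstacle is the uniform (in both $h$ and the element $K$) control of $(\mathbf{D}^{EC})^{-1}\mathbf{A}^{EC}$ and of $(\mathbf{A}^{EC})^{-1}$: one must check that the corrections arising from the coupling of the constant stress term with the $\xi,\eta$-terms do not collapse the determinant of $\tilde{\mathbf{A}}$, so that its inverse retains the same scaling. This is exactly where \textbf{Condition (A)} is indispensable and is the structural reason that ECQ4 stability, unlike PS stability, is asserted only for sufficiently small mesh size $h$.
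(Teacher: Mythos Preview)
Your proposal is correct and follows essentially the same route as the paper: the paper's argument for Theorem~4.5 is nothing more than the remark that, under \textbf{Condition (A)}, the ECQ4 stress mode is a higher-order perturbation of the PS mode, so Lemmas~\ref{PS-stress-ineq}--\ref{tau-epsilon-relation} transfer to $\Sigma_h^{EC}$ and the proof of Theorem~4.2 then applies verbatim. Your write-up simply makes explicit the perturbation analysis of $\mathbf{A}^{EC}$, $\mathbf{D}^{EC}$ and their inverses that the paper leaves implicit; your identification of the invertibility/scaling of $\mathbf{A}^{EC}$ as the place where \textbf{Condition (A)} enters is exactly the point, though note that the paper states Theorem~4.5 without the qualifier ``for sufficiently small $h$'' (that caveat appears only in Theorem~4.4).
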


Combining Theorem 4.4 and Theorem 4.5, we immediately
 have the following uniform error estimates for the ECQ4 finite element:

\begin{theorem}
 Let $(\boldsymbol{\sigma},\mathbf{u})\in \Sigma\times V$ be the solution of the
 variational problem (\ref{weak1.a})(\ref{weak1.b}). Under  the same conditions as in Theorem 4.4,  the discretization problem
 (\ref{discreteweak1.a})(\ref{discreteweak1.b}) admits a unique solution 
 $(\boldsymbol{\sigma}_{h},\mathbf{u}_{h})\in \Sigma_{h}^{EC}\times V_{h}$ such that 
 \begin{equation}\label{estimate-EC}
 \|\boldsymbol{\sigma}-\boldsymbol{\sigma}_{h}\|_{0}+|\mathbf{u}-\mathbf{u}_{h}|_{1}
 \lesssim
 \inf_{\boldsymbol{\tau}\in \Sigma_{h}^{EC}}\|\boldsymbol{\sigma}-\boldsymbol{\tau}\|_{0}+\inf_{\mathbf{v}\in
 V_{h}}|\mathbf{u}-\mathbf{v}|_{1}.
   \end{equation}
   In addition, let $p_{h}=-\frac{1}{2}tr\boldsymbol{\sigma}_{h}$ be the approximation of the pressure $p=-(\mu+\lambda)\mbox{div}\mathbf{u}=-\frac{1}{2}tr\boldsymbol{\sigma}$,  then it holds
 \begin{equation}
||p-p_{h}||_{0}\lesssim  \inf_{\boldsymbol{\tau}\in \Sigma_{h}^{EC}}\|\boldsymbol{\sigma}-\boldsymbol{\tau}\|_{0}+\inf_{\mathbf{v}\in
 V_{h}}|\mathbf{u}-\mathbf{v}|_{1}.
\end{equation}   
\end{theorem}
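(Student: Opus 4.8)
The plan is to regard this theorem as a direct application of the abstract theory of mixed finite element methods \cite{Brezzi1974,Brezzi-Fortin1991}, exactly as Theorem 4.3 was deduced for the PS element, since the two genuinely substantial ingredients are already in hand: the uniform discrete kernel-coercivity $(\mathrm{A1}_h)$ for $\Sigma_h^{EC}$ is Theorem 4.4, and the uniform discrete inf-sup condition $(\mathrm{A2}_h)$ for $\Sigma_h^{EC}$ is Theorem 4.5.

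First I would record that the scheme is conforming and consistent. Because $\Sigma=\mathbf{L}^{2}(\Omega;\mathbb{R}_{sym}^{2\times2})$ imposes no inter-element continuity on the stress, the element-wise definition of $\Sigma_{h}^{EC}$ still gives $\Sigma_{h}^{EC}\subset\Sigma$, while $V_{h}\subset V$ by (\ref{displacement}); hence the bilinear forms in the discrete problem (\ref{discreteweak1.a})(\ref{discreteweak1.b}) are exact restrictions of those in (\ref{weak1.a})(\ref{weak1.b}). Subtracting the discrete equations from the continuous ones then yields the Galerkin orthogonality
$$a(\boldsymbol{\sigma}-\boldsymbol{\sigma}_{h},\boldsymbol{\tau})-\int_{\Omega}\boldsymbol{\tau}:\varepsilon(\mathbf{u}-\mathbf{u}_{h})\,d\mathbf{x}=0,\qquad \int_{\Omega}(\boldsymbol{\sigma}-\boldsymbol{\sigma}_{h}):\varepsilon(\mathbf{v})\,d\mathbf{x}=0$$
for all $\boldsymbol{\tau}\in\Sigma_{h}^{EC}$ and $\mathbf{v}\in V_{h}$, which is the only place where consistency enters.

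Next, combining $(\mathrm{A1}_h)$ and $(\mathrm{A2}_h)$ with the continuity bounds (\ref{a-continuity})--(\ref{F-continuity}), the standard mixed-method theory supplies both the unique solvability of (\ref{discreteweak1.a})(\ref{discreteweak1.b}) and the quasi-optimal estimate (\ref{estimate-EC}). The point that merits attention --- more a matter of bookkeeping than of difficulty --- is that the constant produced by the abstract theorem depends only on the continuity constants and on the two stability constants in $(\mathrm{A1}_h)$ and $(\mathrm{A2}_h)$; since Theorems 4.4 and 4.5 guarantee that all of these are independent of $\lambda$ and $h$, the bound is uniform. Crucially, the argument must use only the kernel-coercivity $(\mathrm{A1}_h)$ and never full coercivity of $a(\cdot,\cdot)$ on $\Sigma_{h}^{EC}$, whose constant degenerates as $\lambda\to\infty$ through the $\frac{1}{4(\mu+\lambda)}tr\boldsymbol{\sigma}\,tr\boldsymbol{\tau}$ contribution; this is exactly what secures the Poisson-locking-free character of the estimate.

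Finally, the pressure bound follows at once from the stress bound. Since $p-p_{h}=-\tfrac{1}{2}tr(\boldsymbol{\sigma}-\boldsymbol{\sigma}_{h})$ and $(tr\boldsymbol{\tau})^{2}=(\boldsymbol{\tau}_{11}+\boldsymbol{\tau}_{22})^{2}\leq 2(\boldsymbol{\tau}_{11}^{2}+\boldsymbol{\tau}_{22}^{2})\leq 2\,\boldsymbol{\tau}:\boldsymbol{\tau}$ pointwise, one obtains $\|p-p_{h}\|_{0}\leq\tfrac{1}{\sqrt{2}}\|\boldsymbol{\sigma}-\boldsymbol{\sigma}_{h}\|_{0}\lesssim\|\boldsymbol{\sigma}-\boldsymbol{\sigma}_{h}\|_{0}$, which is dominated by the right-hand side of (\ref{estimate-EC}). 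I do not expect a real obstacle here: the entire analytic burden has already been absorbed into establishing the two uniform stability conditions for the perturbed ECQ4 stress mode under \textbf{Condition (A)}, and what remains is to assemble them through the mixed-method machinery while tracking the $\lambda$-independence of each constant.
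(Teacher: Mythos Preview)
Your proposal is correct and follows exactly the paper's approach: the paper states Theorem 4.6 as an immediate consequence of Theorems 4.4 and 4.5 via the abstract mixed finite element theory of \cite{Brezzi1974,Brezzi-Fortin1991}, without giving a separate proof. You have simply spelled out the details (conformity, Galerkin orthogonality, and the $\lambda$-independence bookkeeping) that the paper leaves implicit, and your derivation of the pressure bound from the stress bound is the natural one.
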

%

 \setcounter{remark}{0}\setcounter{lemma}{0} \setcounter{theorem}{0}
 \setcounter{section}{5} \setcounter{equation}{0}
\section*{5. \ Equivalent EAS schemes}


 By following the basic idea of \cite{Piltner-Taylor1995,Piltner-Taylor1999,Piltner2000}, this part is devoted to the equivalence between the hybrid stress finite element method and some 
 enhanced strains finite element scheme.

 The equivalent enhanced strains method is based on   the following
 modified Hu-Washizu functional:
 \begin{eqnarray*}
  \Pi(\boldsymbol{\tau},\mathbf{v},\boldsymbol{\gamma},\boldsymbol{\gamma}^{b})
 &=&-\frac{1}{2}b(\boldsymbol{\gamma},\boldsymbol{\gamma})
 +\sum_{K}\{\int_{K}\boldsymbol{\tau}:(\boldsymbol{\gamma}-\varepsilon(\mathbf{v})-\boldsymbol{\gamma}^{b})d\mathbf{x}\\
 &&-\oint_{\boldsymbol{\gamma}_{N}\cap\partial K}\mathbf{g}\cdot \mathbf{v}ds
 -\int_{K}f\cdot \mathbf{v}d\mathbf{x}\},
 \end{eqnarray*}
 where
\begin{eqnarray*}
  b(\boldsymbol{\alpha},\boldsymbol{\beta})=\int_{\Omega}\boldsymbol{\alpha}:\mathbb{C}\boldsymbol{\beta}d\mathbf{x}=\int_{\Omega}(2\mu\boldsymbol{\alpha}:\boldsymbol{\beta}+\lambda tr\boldsymbol{\alpha}
  tr\boldsymbol{\beta})d\mathbf{x},
 \end{eqnarray*}
 $\mathbf{v}\in V_{h}$ is the compatible  displacements given in (\ref{displacement}),
 $\varepsilon(\mathbf{v})=(\nabla \mathbf{v}+\nabla^{T} \mathbf{v})/2$
 denotes the strain caused by the displacement vector $\mathbf{v}$,
 $\boldsymbol{\tau}\in \tilde{\Sigma}_{h}$ is the unconstraint stress tensor with $$
 \tilde{\Sigma}_{h}:=\{\boldsymbol{\gamma}\in  \mathbf{L}^{2}(\Omega;\mathbb{R}_{sym}^{2\times2}): \hat{\boldsymbol{\gamma}}_{ij}=\boldsymbol{\gamma}_{ij}|_{K}\circ F_{K}\in span\{1,\xi,\eta\}\ \ 
 \mbox{for }i,j=1,2, \ K\in T_{h}\},$$ 
$\boldsymbol{\gamma}\in
  \tilde{\Sigma}_{h}$ and $\boldsymbol{\gamma}^{b}\in U_{h}^{b}$ are the independent strain and enhanced strain tensors
 respectively with 
 $$U_{h}^{b}=U_{PS}^{b}:=\{\tilde{\varepsilon}(\mathbf{v}^{b}):\ \mathbf{v}^{b}\in
 B_{h}\}$$
  for the PS finite element, and 
 $$U_{h}^{b}=U_{EC}^{b}:=\{\varepsilon(\mathbf{v}^{b}):\ \mathbf{v}^{b}\in
 B_{h}\}$$
  for the ECQ4 finite element. 

 The variational formulations of the above enhanced strains
 method read as:
 Find $(\boldsymbol{\sigma}_{h},\mathbf{u}_{h},\boldsymbol{\varepsilon}_{h},\boldsymbol{\varepsilon}_{h}^{b})\in  \tilde{\Sigma}_{h}\times V_{h}\times  \tilde{\Sigma}_{h}\times
 U_{h}^{b}$ such that
 \begin{eqnarray}\label{equivalence1}
 \sum_{K}\{\int_{K}\boldsymbol{\tau}:(\boldsymbol{\varepsilon}_{h}-\varepsilon(\mathbf{u}_{h})-\boldsymbol{\varepsilon}_{h}^{b})d\mathbf{x}\}=0\hskip2cm \mbox{for all } \boldsymbol{\tau}\in  \tilde{\Sigma}_{h},
 \end{eqnarray}
 \begin{eqnarray}\label{equivalence2}
 b(\boldsymbol{\gamma},\boldsymbol{\varepsilon}_{h})-\int_{\Omega}\boldsymbol{\gamma}:\boldsymbol{\sigma}_{h}d\mathbf{x}=0\hskip4cm \mbox{for all } \boldsymbol{\gamma}\in  \tilde{\Sigma}_{h},
 \end{eqnarray}
 \begin{eqnarray}\label{equivalence3}
\int_{\Omega}\boldsymbol{\sigma}_{h}:\varepsilon(\mathbf{v})d\mathbf{x}
 =\sum_{K}\{\int_{K} f\cdot \mathbf{v}d\mathbf{x}+\oint_{\Gamma_{N}\cap\partial K}\mathbf{g}\cdot \mathbf{v}ds\}\  \mbox{for all } \mathbf{v}\in V_{h},
 \end{eqnarray}
 \begin{eqnarray}\label{equivalence4}
\int_{\Omega}\boldsymbol{\sigma}_{h}:\boldsymbol{\gamma}^{b}d\mathbf{x}=0\hskip6cm  \mbox{for all } \boldsymbol{\gamma}^{b}\in U_{h}^{b}.
 \end{eqnarray}

 We claim that the hybrid stress finite element scheme (\ref{discreteweak1.a})(\ref{discreteweak1.b})  for PS and ECQ4 is equivalent to the scheme (\ref{equivalence1})-(\ref{equivalence4}) in the sense that the stress and displacement solution,  $(\boldsymbol{\sigma}_{h},\mathbf{u}_{h})$,  of the latter enhanced strains scheme,  also satisfy the equations (\ref{discreteweak1.a})(\ref{discreteweak1.b}).

 In fact, we  decompose
 $ \tilde{\Sigma}_{h}$ as $\tilde{\Sigma}_{h}=\Sigma_{h}\oplus ( \tilde{\Sigma}_{h} \diagdown \Sigma_{h})$, where $\Sigma_{h}=\Sigma_{h}^{PS}$
 for the PS finite element and   $\Sigma_{h}=\Sigma_{h}^{EC}$ for ECQ4. It is easy to see that the relation (\ref{equivalence4})  indicates $\boldsymbol{\sigma}_{h} \in \Sigma_{h}$.  Thus (\ref{equivalence4}) is just the same as (\ref{discreteweak1.b}). 

On the other hand, by using the decomposition of $ \tilde{\Sigma}_{h}$,  the equation
 (\ref{equivalence1}) leads to:
\begin{eqnarray}\label{equivalence1.a}
 \sum_{K}\{\int_{K}\boldsymbol{\tau}:(\boldsymbol{\varepsilon}_{h}-\varepsilon(\mathbf{u}_{h})-\boldsymbol{\varepsilon}_{h}^{b})d\mathbf{x}\}=0\hskip1cm  \mbox{for all } \boldsymbol{\tau}\in  \tilde{\Sigma}_{h}\diagdown\Sigma_{h},
 \end{eqnarray}
\begin{eqnarray}\label{equivalence1.b}
 \sum_{K}\{\int_{K}\boldsymbol{\tau}:(\boldsymbol{\varepsilon}_{h}-\varepsilon(\mathbf{u}_{h})-\boldsymbol{\varepsilon}_{h}^{b})d\mathbf{x}\}=0\ \mbox{for all } \boldsymbol{\tau}\in \Sigma_{h}.
 \end{eqnarray}
 Since $2\mu\boldsymbol{\varepsilon}_{h}+\lambda tr\boldsymbol{\varepsilon}_{h} \mathbf{I}-\boldsymbol{\sigma}_{h} \in  \tilde{\Sigma}_{h}$, from (\ref{equivalence2}) we get
 $\boldsymbol{\sigma}_{h}=2\mu\boldsymbol{\varepsilon}_{h}+\lambda tr\boldsymbol{\varepsilon}_{h} \mathbf{I}$ or
 $\boldsymbol{\varepsilon}_{h}= \frac{1}{2\mu}[\boldsymbol{\sigma}_{h}-\frac{\lambda}{2(\mu+\lambda)}tr\boldsymbol{\sigma}_{h} \mathbf{I}]$.
 Substitute this into (\ref{equivalence1.b}), we then get an
 equation as same as (\ref{discreteweak1.a}).  Hence, the equivalence follows.  
 
Notice that one can solve  $\varepsilon_{h}^{b}$ from the equation (\ref{equivalence1.a}).

 \begin{remark}
  As shown in \cite{Piltner2000,Xie-Zhou2004,Xie-Zhou2008}, we also have two higher-order hybrid stress finite element schemes equivalent to the schemes of PS and ECQ4, respectively.  More precisely, the higher-order  schemes are given as:    Find $(\tilde{\boldsymbol{\sigma}}_{h},\tilde{\mathbf{u}}_{h},\mathbf{u}_{h}^{b})\in \tilde{\Sigma}_{h}\times V_{h}\times B_{h}$  such that
 \begin{equation*}
 a(\tilde{\boldsymbol{\sigma}}_{h},\boldsymbol{\tau})-\int_{\Omega}\boldsymbol{\tau}:\left(\varepsilon(\tilde{\mathbf{u}}_{h})+\varepsilon_{M}(\mathbf{u}_{h}^{b})\right) d\mathbf{x}=0\ \  \,\mbox{for all }\boldsymbol{\tau}\in
\tilde{\Sigma}_{h},
\end{equation*}
\begin{equation*}
 \int_{\Omega}\tilde{\boldsymbol{\sigma}}_{h}:\left(\varepsilon(\mathbf{v})+\varepsilon_{M}(\mathbf{v}^{b})\right) d\mathbf{x}
 =F(\mathbf{v})\ \ \, 
 \mbox{for all } \mathbf{v}\in V_{h},\ \mathbf{v}^{b}\in B_{h},
 \end{equation*}
where $\varepsilon_{M}=\tilde{\varepsilon}$ for the PS case and $\varepsilon_{M}=\varepsilon$ for the ECQ4 case. The equivalence is in the sense that the solutions of the scheme (\ref{discreteweak1.a})-(\ref{discreteweak1.b}) for PS and ECQ4 and of the above higher-order scheme satisfy
$$ \tilde{\boldsymbol{\sigma}}_{h}=\boldsymbol{\sigma}_{h}\quad\mbox{and}\quad \tilde{\mathbf{u}}_{h}=\mathbf{u}_{h}.$$
 In fact, due to the constraints (\ref{PSconstraint})-(\ref{ECconstraint}), we can view the higher-order scheme  as an unconstrained one derived from  the constrained scheme (\ref{discreteweak1.a})-(\ref{discreteweak1.b}), with $\mathbf{u}_{h}^{b}\in   B_{h}$ being a Lagrange multiplier.
 \end{remark}

 \begin{remark}
 Notice that in the hybrid stress finite element scheme (\ref{discreteweak1.a})-(\ref{discreteweak1.b}), a term like $\mathbb{C}^{-1} $ is involved.  Thus for non-linear
  problems where  $\mathbb{C}$ is not  a constant modulus tensor, it is not convenient to implement the hybrid finite element method, while for  the    the enhanced strains
  method, this is not a difficulty, since one does not need to compute $\mathbb{C}^{-1} $.   However, owing to the equivalence shown above,     the hybrid finite element technology with PS and ECQ4 is easily extended to non-linear
  problems.
 \end{remark}

 \setcounter{remark}{0}\setcounter{lemma}{0} \setcounter{theorem}{0}
 \setcounter{section}{6} \setcounter{equation}{0}
\section*{6. \ Uniform a posteriori error estimates for hybrid methods}
\subsection*{6.1. \ A posteriori error analysis}
By following the same routine as in \cite{Braess-Carstensen-Reddy2004, Carstensen2005, Carstensen-Hu-Orlando2007},  one derives  the  computable upper bound 
\begin{equation}\label{estimator}
\eta_{h}^{2}:=\sum\limits_{K\in T_{h}}\|h_{K}(\mathbf{f}+{\bf div}
\boldsymbol{\sigma}_{h})\|_{0,K}^{2}+
 \|\mathbb{C}^{-1}\boldsymbol{\sigma}_{h}-\varepsilon(\mathbf{u}_{h})\|_{0,\Omega}^{2}+
 \sum\limits_{E\in \mathcal{E}_{0}\bigcup \mathcal{E}_{N}}h_{E}\|[\boldsymbol{\sigma}_{h} \mathbf{n}_{E}]\|_{0,E}^{2}
\end{equation}
for the  error $ \|\boldsymbol{\sigma}-\boldsymbol{\sigma}_{h}\|_{0}^{2}+|\mathbf{u}-\mathbf{u}_{h}|_{1}^{2} $ of the hybrid finite element methods. 
Here $\mathcal{E}_{0}$ denotes the set of all interior edges of $T_{h}$, $\mathcal{E}_{N}$ the set of  all edges on the boundary $\Gamma_{N}$,   $h_{E}$  the length of an edge $E\in \mathcal{E}:=\mathcal{E}_{0}\bigcup\mathcal{E}_{N}$,  $\mathbf{n}_{E}$ the unit  normal along $E$,  and $[\boldsymbol{\sigma}_{h}\mathbf{n} _{E}]$   the jump
of $\boldsymbol{\sigma}_{h}\mathbf{n}$ on $E$, especially for  $E\in \mathcal{E}_{N}$,  $[\boldsymbol{\sigma}_{h}\mathbf{n} _{E}]:=\boldsymbol{\sigma}_{h}\mathbf{n} _{E}-\mathbf{g}$. 

 We first define an operator $\mathbb{A}:\ \Sigma\times V\ \rightarrow (\Sigma\times V)' $ by
\begin{equation*}
<\mathbb{A}(\boldsymbol{\sigma},\mathbf{u}),(\boldsymbol{\tau},\mathbf{v})>:=a(\boldsymbol{\sigma},\boldsymbol{\tau})-\int_{\Omega}\boldsymbol{\sigma}:\varepsilon(\mathbf{v})d\mathbf{x}-\int_{\Omega}\boldsymbol{\tau}:\varepsilon(\mathbf{u})d\mathbf{x}
\end{equation*}
for all $\boldsymbol{\sigma},\boldsymbol{\tau}\in \Sigma $ and $\mathbf{u}, \mathbf{v}\in V $.  Then, from (A1), (A2) and Theorem 2.2 we immediately get
\begin{lemma}
The operator $\mathbb{A}$ defined as above is bounded and bijective, and the operator norms of  $\mathbb{A}$ and  $\mathbb{A}^{-1}$ are independent of  $\lambda$ and $h$.

\end{lemma}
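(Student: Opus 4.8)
The plan is to recognize $\mathbb{A}$ as the operator associated with the saddle-point bilinear form
\[
\mathcal{B}\big((\boldsymbol{\sigma},\mathbf{u}),(\boldsymbol{\tau},\mathbf{v})\big):=a(\boldsymbol{\sigma},\boldsymbol{\tau})-\int_{\Omega}\boldsymbol{\sigma}:\varepsilon(\mathbf{v})d\mathbf{x}-\int_{\Omega}\boldsymbol{\tau}:\varepsilon(\mathbf{u})d\mathbf{x}
\]
on the product Hilbert space $X:=\Sigma\times V$ equipped with $\|(\boldsymbol{\tau},\mathbf{v})\|_{X}:=\|\boldsymbol{\tau}\|_{0}+|\mathbf{v}|_{1}$, and to invoke the abstract theory of \cite{Brezzi1974,Brezzi-Fortin1991}, whose hypotheses are precisely the uniform conditions (A1) and (A2) established in Theorem 2.1. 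First I would prove boundedness: applying the continuity estimates (\ref{a-continuity}) and (\ref{b-continuity}) term by term gives
\[
\big|\mathcal{B}\big((\boldsymbol{\sigma},\mathbf{u}),(\boldsymbol{\tau},\mathbf{v})\big)\big|\lesssim \|\boldsymbol{\sigma}\|_{0}\|\boldsymbol{\tau}\|_{0}+\|\boldsymbol{\sigma}\|_{0}|\mathbf{v}|_{1}+\|\boldsymbol{\tau}\|_{0}|\mathbf{u}|_{1}\lesssim \|(\boldsymbol{\sigma},\mathbf{u})\|_{X}\|(\boldsymbol{\tau},\mathbf{v})\|_{X},
\]
with a constant independent of $\lambda$ and $h$, so that $\mathbb{A}$ is bounded and $\|\mathbb{A}\|\lesssim 1$.

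The core of the argument is the lower bound
\[
\|(\boldsymbol{\sigma},\mathbf{u})\|_{X}\lesssim \sup_{0\neq(\boldsymbol{\tau},\mathbf{v})\in X}\frac{\mathcal{B}\big((\boldsymbol{\sigma},\mathbf{u}),(\boldsymbol{\tau},\mathbf{v})\big)}{\|(\boldsymbol{\tau},\mathbf{v})\|_{X}}.
\]
I would derive this from the two Brezzi conditions in the standard way: for a given $\mathbf{u}$, condition (A2) supplies a stress test direction that controls $|\mathbf{u}|_{1}$ through the off-diagonal coupling term, while (A1) controls the kernel part of $\boldsymbol{\sigma}$ through $a(\cdot,\cdot)$; combining these two test directions with a suitable scaling yields the displayed inf-sup bound, with a constant assembled only from the continuity constants and the stability constants of (A1)--(A2). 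This inequality shows that $\mathbb{A}$ is injective with closed range and delivers the a priori estimate $\|(\boldsymbol{\sigma},\mathbf{u})\|_{X}\lesssim \|\mathbb{A}(\boldsymbol{\sigma},\mathbf{u})\|_{X'}$, i.e.\ $\|\mathbb{A}^{-1}\|\lesssim 1$ on the range of $\mathbb{A}$.

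For surjectivity I would exploit the symmetry of $\mathcal{B}$: since $a$ is symmetric and the two coupling terms are interchanged under $(\boldsymbol{\sigma},\mathbf{u})\leftrightarrow(\boldsymbol{\tau},\mathbf{v})$, one has $\mathcal{B}\big((\boldsymbol{\sigma},\mathbf{u}),(\boldsymbol{\tau},\mathbf{v})\big)=\mathcal{B}\big((\boldsymbol{\tau},\mathbf{v}),(\boldsymbol{\sigma},\mathbf{u})\big)$, so $\mathbb{A}$ coincides with its transpose. The same inf-sup lower bound then also certifies non-degeneracy in the second argument, whence the range of $\mathbb{A}$ is dense; being also closed, it equals all of $X'$. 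Thus $\mathbb{A}$ is bijective and the a priori bound fixes $\|\mathbb{A}^{-1}\|\lesssim 1$. Because every constant entering these estimates comes from the continuity conditions and from (A1)--(A2), all of which are uniform in $\lambda$ and $h$ by Theorem 2.1, the operator norms of $\mathbb{A}$ and $\mathbb{A}^{-1}$ are independent of $\lambda$ and $h$.

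The main obstacle is the combined inf-sup lower bound: one must verify that gluing together the separate controls furnished by (A1) and (A2) does not introduce a constant degenerating as $\lambda\to\infty$. This is guaranteed because (A1) holds on the kernel $Z$ with a $\lambda$-independent constant and (A2) is likewise uniform, so the classical Brezzi construction preserves uniformity; equivalently, one may simply read off the assertion from Theorem 2.2 together with \cite{Brezzi1974,Brezzi-Fortin1991}.
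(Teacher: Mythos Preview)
Your proposal is correct and follows essentially the same route as the paper: the paper simply records that Lemma~6.1 follows immediately from (A1), (A2) and Theorem~2.2 together with the Brezzi theory \cite{Brezzi1974,Brezzi-Fortin1991}, and your write-up is a faithful unpacking of that one-line justification. The only cosmetic remark is that $\mathbb{A}$ is defined on the continuous spaces $\Sigma\times V$, so independence of $h$ is automatic and need not be argued.
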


We  need the following weak interpolation operator \cite{Bernardi-Girault1998}.
\begin{lemma}
 Let the partition $T_{h}$ satisfy (\ref{partition condition}) . Then there exists an operator $\mathcal{J}:\ V\rightarrow V_{h} $ such that, for all $\mathbf{v}\in V$,
\begin{equation}\label{Clement}
||h_{\mathcal{T}}^{-1}(\mathbf{v}-\mathcal{J}\mathbf{v})||_{0}+||h_{\mathcal{E}}^{-1/2}(\mathbf{v}-\mathcal{J}\mathbf{v})||_{0,\mathcal{E}}
\lesssim |\mathbf{v}|_{1}.
\end{equation}
\end{lemma}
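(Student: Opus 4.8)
The plan is to construct $\mathcal{J}$ as a Clément/Scott--Zhang type quasi-interpolation operator adapted to the isoparametric bilinear space $V_{h}$ in (\ref{displacement}), and then to establish (\ref{Clement}) by local scaling arguments combined with the finite overlap of element patches. First I would set up the local averaging. Let $\mathcal{N}_{h}$ denote the set of mesh vertices and, for each $z\in\mathcal{N}_{h}$, let $\omega_{z}=\bigcup\{K\in T_{h}:z\in\overline{K}\}$ be the nodal patch and $\phi_{z}\in V_{h}$ the associated (isoparametric) bilinear nodal basis function, so that $\{\phi_{z}\}$ is a partition of unity. For an interior vertex or a vertex on $\Gamma_{N}$ I would set $(\mathcal{J}\mathbf{v})(z)$ equal to the mean value $\Pi_{z}\mathbf{v}:=\frac{1}{|\omega_{z}|}\int_{\omega_{z}}\mathbf{v}\,d\mathbf{x}$ (componentwise), and for a vertex on $\overline{\Gamma_{D}}$ I would set $(\mathcal{J}\mathbf{v})(z)=0$; then $\mathcal{J}\mathbf{v}:=\sum_{z}(\mathcal{J}\mathbf{v})(z)\,\phi_{z}$. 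Prescribing zero nodal values on $\Gamma_{D}$ guarantees $\mathcal{J}\mathbf{v}\in V_{h}$ (its trace vanishes on $\Gamma_{D}$), while the averaging preserves the approximation order because $\Pi_{z}$ reproduces constants.

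The core estimate is the local bound $\|\mathbf{v}-\mathcal{J}\mathbf{v}\|_{0,K}\lesssim h_{K}|\mathbf{v}|_{1,\omega_{K}}$ on each $K\in T_{h}$, where $\omega_{K}:=\bigcup\{\omega_{z}:z\in\overline{K}\}$ is the finite union of patches meeting $K$. I would prove it by transporting to the reference square $\hat{K}$ through $F_{K}$: using the shape-regularity (\ref{partition condition}) and the Jacobian bounds of Lemma \ref{zhang} (so that $J_{K}\approx h_{K}^{2}$ and the distortion ratio is controlled), each component of $\mathbf{v}-\Pi_{z}\mathbf{v}$ is estimated by a Poincar\'e/Bramble--Hilbert inequality on the patch, the partition-of-unity property removing the constant part. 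For a boundary element touching $\Gamma_{D}$ I would instead invoke a Friedrichs-type inequality on $\omega_{K}$ (valid since $\mathbf{v}|_{\Gamma_{D}}=0$), so that the zero nodal prescription still yields the $h_{K}|\mathbf{v}|_{1,\omega_{K}}$ bound. Squaring, multiplying by $h_{K}^{-2}$, summing over $K$, and using that each element lies in only a bounded number of patches (again a consequence of (\ref{partition condition})) gives $\|h_{\mathcal{T}}^{-1}(\mathbf{v}-\mathcal{J}\mathbf{v})\|_{0}\lesssim|\mathbf{v}|_{1}$.

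For the edge term I would combine the scaled trace inequality $\|w\|_{0,E}^{2}\lesssim h_{E}^{-1}\|w\|_{0,K}^{2}+h_{E}|w|_{1,K}^{2}$ (with $w=\mathbf{v}-\mathcal{J}\mathbf{v}$ and $E\subset\partial K$) with the local $L^{2}$ estimate above and the $H^{1}$-stability bound $|\mathbf{v}-\mathcal{J}\mathbf{v}|_{1,K}\lesssim|\mathbf{v}|_{1,\omega_{K}}$, proved by the same scaling. This yields $\|\mathbf{v}-\mathcal{J}\mathbf{v}\|_{0,E}^{2}\lesssim h_{E}|\mathbf{v}|_{1,\omega_{K}}^{2}$, and after multiplication by $h_{E}^{-1}$, summation, and the finite-overlap argument, the edge contribution is likewise bounded by $|\mathbf{v}|_{1}$. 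Adding the two parts gives (\ref{Clement}).

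I expect the main obstacle to be the non-affine nature of the isoparametric bilinear maps: unlike simplicial affine elements, $F_{K}$ is genuinely bilinear whenever $K$ is not a parallelogram, so its Jacobian is non-constant and the usual scaling constants depend on the element geometry. Controlling this dependence uniformly --- keeping all constants independent of $h$ --- is exactly where the shape-regularity hypothesis (\ref{partition condition}) and the quantitative estimates of Lemma \ref{zhang} (the bounded Jacobian ratio and $J_{K}\approx h_{K}^{2}$) must be used carefully, together with the bounded-overlap property of the patches, to prevent the constants from degenerating under mesh distortion.
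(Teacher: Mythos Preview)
Your construction and estimates are correct and follow the standard Cl\'ement/Scott--Zhang approach for quadrilateral meshes. Note, however, that the paper does not give its own proof of this lemma: it simply quotes the result from Bernardi and Girault \cite{Bernardi-Girault1998}, where the quasi-interpolation operator on shape-regular quadrilateral meshes is constructed and the approximation and trace estimates are established precisely along the lines you outline (nodal averaging over patches, zero values at Dirichlet vertices, local Poincar\'e/Friedrichs inequalities, and the scaled trace inequality, with the non-affine isoparametric maps controlled via the Jacobian bounds implied by shape regularity). So your proposal is not a different route but rather a faithful reconstruction of the cited reference's argument.
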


In light of this lemma, we have the following a posteriori error estimate for the hybrid finite element scheme (\ref{discreteweak1.a})-(\ref{discreteweak1.b}).

\begin{theorem}
 Let the partition $T_{h}$ satisfy (\ref{partition condition}) .  Then it holds 
 \begin{equation}\label{posteriori}
 \|\boldsymbol{\sigma}-\boldsymbol{\sigma}_{h}\|_{0}+|\mathbf{u}-\mathbf{u}_{h}|_{1}
 \lesssim
\eta_{h}.
  \end{equation}
\end{theorem}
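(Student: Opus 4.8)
The plan is to reduce the error to the dual norm of a residual and then bound that residual by $\eta_h$. Since Lemma 6.1 asserts that $\mathbb{A}$ is bijective with $\|\mathbb{A}^{-1}\|$ independent of $\lambda$ and $h$, I would first record
\[
\|\boldsymbol{\sigma}-\boldsymbol{\sigma}_{h}\|_{0}+|\mathbf{u}-\mathbf{u}_{h}|_{1}
\lesssim
\sup_{(\boldsymbol{\tau},\mathbf{v})\neq(0,0)}
\frac{\langle\mathbb{A}(\boldsymbol{\sigma}-\boldsymbol{\sigma}_{h},\mathbf{u}-\mathbf{u}_{h}),(\boldsymbol{\tau},\mathbf{v})\rangle}{\|\boldsymbol{\tau}\|_{0}+|\mathbf{v}|_{1}},
\]
so the task becomes bounding the numerator. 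Expanding $\mathbb{A}$ and inserting the continuous equations (\ref{weak1.a})(\ref{weak1.b}), namely $a(\boldsymbol{\sigma},\boldsymbol{\tau})-\int_{\Omega}\boldsymbol{\tau}:\varepsilon(\mathbf{u})d\mathbf{x}=0$ and $\int_{\Omega}\boldsymbol{\sigma}:\varepsilon(\mathbf{v})d\mathbf{x}=F(\mathbf{v})$, I would rewrite the residual as a sum of a stress-test part and a displacement-test part,
\[
\langle\mathbb{A}(\boldsymbol{\sigma}-\boldsymbol{\sigma}_{h},\mathbf{u}-\mathbf{u}_{h}),(\boldsymbol{\tau},\mathbf{v})\rangle
=\underbrace{\int_{\Omega}\boldsymbol{\tau}:(\varepsilon(\mathbf{u}_{h})-\mathbb{C}^{-1}\boldsymbol{\sigma}_{h})d\mathbf{x}}_{=:R_{1}(\boldsymbol{\tau})}
+\underbrace{\Big(\int_{\Omega}\boldsymbol{\sigma}_{h}:\varepsilon(\mathbf{v})d\mathbf{x}-F(\mathbf{v})\Big)}_{=:R_{2}(\mathbf{v})},
\]
and treat the two directions independently.

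The stress residual is immediate: a Cauchy--Schwarz estimate in $L^{2}$ gives $|R_{1}(\boldsymbol{\tau})|\leq\|\mathbb{C}^{-1}\boldsymbol{\sigma}_{h}-\varepsilon(\mathbf{u}_{h})\|_{0}\,\|\boldsymbol{\tau}\|_{0}$, which reproduces the constitutive-defect term of $\eta_{h}$ with constant one, so no $\lambda$-dependence is introduced here. (The discrete stress equation (\ref{discreteweak1.a}) in fact forces $R_{1}$ to vanish on $\Sigma_{h}^{PS}$ respectively $\Sigma_{h}^{EC}$, but the direct bound is all that is needed for reliability.)

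The displacement residual $R_{2}$ is handled by the familiar residual technique. Because $\boldsymbol{\sigma}_{h}$ satisfies the discrete equilibrium equation (\ref{discreteweak1.b}), Galerkin orthogonality gives $R_{2}(\mathbf{v}_{h})=0$ for all $\mathbf{v}_{h}\in V_{h}$, so I may replace $\mathbf{v}$ by $\mathbf{v}-\mathcal{J}\mathbf{v}$ with the weak interpolation operator $\mathcal{J}$ of Lemma 6.2. Using the symmetry of $\boldsymbol{\sigma}_{h}$ to write $\boldsymbol{\sigma}_{h}:\varepsilon(\mathbf{v}-\mathcal{J}\mathbf{v})=\boldsymbol{\sigma}_{h}:\nabla(\mathbf{v}-\mathcal{J}\mathbf{v})$ and integrating by parts element by element produces the interior term $-(\mathbf{f}+\mathbf{div}\,\boldsymbol{\sigma}_{h})$ on each $K$ together with boundary contributions; summing over $T_{h}$ collapses the latter into the normal jumps $[\boldsymbol{\sigma}_{h}\mathbf{n}_{E}]$ across $\mathcal{E}_{0}$ and into $\boldsymbol{\sigma}_{h}\mathbf{n}_{E}-\mathbf{g}$ on $\mathcal{E}_{N}$, while the terms on $\Gamma_{D}$ drop since $\mathbf{v}-\mathcal{J}\mathbf{v}$ vanishes there. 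Applying Cauchy--Schwarz element-wise and edge-wise and then the interpolation bound (\ref{Clement}) yields $|R_{2}(\mathbf{v})|\lesssim\eta_{h}\,|\mathbf{v}|_{1}$.

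Combining the two estimates gives $\langle\mathbb{A}(\boldsymbol{\sigma}-\boldsymbol{\sigma}_{h},\mathbf{u}-\mathbf{u}_{h}),(\boldsymbol{\tau},\mathbf{v})\rangle\lesssim\eta_{h}(\|\boldsymbol{\tau}\|_{0}+|\mathbf{v}|_{1})$, whence (\ref{posteriori}) follows from Lemma 6.1. The decisive point for uniformity in $\lambda$ is the very first reduction via $\mathbb{A}^{-1}$: every subsequent step uses only $L^{2}$ Cauchy--Schwarz and the $\lambda$-free estimate (\ref{Clement}), so the whole argument inherits $\lambda$-independence exactly from the uniform bound on $\|\mathbb{A}^{-1}\|$ in Lemma 6.1 (which itself rests on the uniform stability (A1)--(A2) of Theorem 2.1). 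I expect the only genuinely technical part to be the element-wise integration by parts and the careful bookkeeping of the edge jump terms, in particular the Neumann convention $[\boldsymbol{\sigma}_{h}\mathbf{n}_{E}]=\boldsymbol{\sigma}_{h}\mathbf{n}_{E}-\mathbf{g}$ on $\mathcal{E}_{N}$.
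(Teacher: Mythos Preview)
Your proposal is correct and follows essentially the same route as the paper's own proof: reduce to the dual norm of the residual via Lemma~6.1, split into the constitutive-defect part $R_{1}$ (bounded directly by Cauchy--Schwarz) and the equilibrium residual $R_{2}$ (handled by Galerkin orthogonality, the interpolation operator $\mathcal{J}$ of Lemma~6.2, and elementwise integration by parts). The only cosmetic difference is that the paper keeps the displacement residual in the form $-\int_{\Omega}(\boldsymbol{\sigma}-\boldsymbol{\sigma}_{h}):\varepsilon(\mathbf{v}-\mathcal{J}\mathbf{v})\,d\mathbf{x}$ before integrating by parts, whereas you first rewrite it as $\int_{\Omega}\boldsymbol{\sigma}_{h}:\varepsilon(\mathbf{v})\,d\mathbf{x}-F(\mathbf{v})$; these are equivalent.
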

\begin{remark}
Here we recall that ``$\lesssim$'' denotes ``$\leq C$ ''with $C$ a positive constant which is bounded as $\lambda\rightarrow \infty$ and is independent of $h$.

\end{remark}

\begin{remark}
In fact, the reliable error estimate in Theorem 6.1 is efficient as well in a sense that the estimate
\begin{equation}
\sum\limits_{K\in T_{h}}\|h_{K}(\mathbf{f}+{\bf div}
\boldsymbol{\sigma}_{h})\|_{0,K}^{2}+
 \sum\limits_{E\in \mathcal{E}}h_{E}\|[\boldsymbol{\sigma}_{h} \mathbf{n}_{E}]\|_{0,E}^{2}\lesssim
\|\boldsymbol{\sigma}-\boldsymbol{\sigma}_{h}\|_{0}^{2}+ |\mathbf{u}-\mathbf{u}_{h}|_{1}^{2}+osc(\mathbf{f},T_{h})^{2}
\end{equation}
holds, where  $osc(\mathbf{f},T_{h})^{2}:=\sum\limits_{K\in T_{h}}\|h_{K}(\mathbf{f}-\mathbf{f}_{h})\|_{0,K}^{2}$ for the $T_{h} $ piecewise constant integral means $\mathbf{f}_{h}$.  This can be obtained by
following  similar arguments in \cite{Verfurth1996}.
\end{remark}

\noindent {\it Proof of Theorem 6.1.} The desired result can be obtained by following the same routine as in   in \cite{Braess-Carstensen-Reddy2004}. Here for completeness we   give a proof.

In fact, the stability of $\mathbb{A}$ in Lemma 6.1 ensures that 
\begin{eqnarray*}
\|\boldsymbol{\sigma}-\boldsymbol{\sigma}_{h}\|_{0}+|\mathbf{u}-\mathbf{u}_{h}|_{1}
\lesssim\sup\limits_{\boldsymbol{\tau}\in\Sigma,\mathbf{v}\in V}
\frac{<\mathbb{A}(\boldsymbol{\sigma}-\boldsymbol{\sigma}_{h},\mathbf{u}-\mathbf{u}_{h}),(\boldsymbol{\tau},\mathbf{v})>}{||\boldsymbol{\tau}||_{0}+|\mathbf{v}|_{1}}.
\end{eqnarray*}
With the relation $\boldsymbol{\sigma}=\mathbb{C}^{-1}\varepsilon(\mathbf{u})$ and the Galerkin orthogonality $\int_{\Omega}(\boldsymbol{\sigma}-\boldsymbol{\sigma}_{h}):\varepsilon(\mathcal{J}\mathbf{v})d\mathbf{x} =0, $ this equals
$$
\sup\limits_{\boldsymbol{\tau}\in\Sigma,\mathbf{v}\in V}
\frac{\int_{\Omega}(\mathbb{C}^{-1}(\boldsymbol{\sigma}-\boldsymbol{\sigma}_{h})-\varepsilon(\mathbf{u}-\mathbf{u}_{h})):\boldsymbol{\tau}d\mathbf{x}-\int_{\Omega}(\boldsymbol{\sigma}-\boldsymbol{\sigma}_{h}):\varepsilon(\mathbf{v})d\mathbf{x}}{||\boldsymbol{\tau}||_{0}+|\mathbf{v}|_{1}}$$
$$=\sup\limits_{\boldsymbol{\tau}\in\Sigma,\mathbf{v}\in V}
\frac{\int_{\Omega}(\varepsilon(\mathbf{u}_{h})-\mathbb{C}^{-1}\boldsymbol{\sigma}_{h}):\boldsymbol{\tau}d\mathbf{x}-\int_{\Omega}(\boldsymbol{\sigma}-\boldsymbol{\sigma}_{h}):\varepsilon(\mathbf{v}-\mathcal{J}\mathbf{v})d\mathbf{x}}{||\boldsymbol{\tau}||_{0}+|\mathbf{v}|_{1}}.
$$
With Cauchy's inequality and integration by parts, plus Lemma 6.2, this is bounded from above by
$$
{\small
 \sup\limits_{\boldsymbol{\tau}\in\Sigma,\mathbf{v}\in V}
\left(-\sum\limits_{K\in T_{h}}\int_{K}(\mathbf{f}+{\bf div}\boldsymbol{\sigma}_{h}):(\mathbf{v}-\mathcal{J}\mathbf{v})d\mathbf{x}+\sum\limits_{E\in \mathcal{E}}[\boldsymbol{\sigma}_{h}{\bf n}_{E}]\cdot (\mathbf{v}-\mathcal{J}\mathbf{v})ds\right)/|\mathbf{v}|_{1} +
}$$
$$\hskip6cm+||\mathbb{C}^{-1}\boldsymbol{\sigma}_{h}-\varepsilon(\mathbf{u}_{h})||_{0}\hskip1cm \lesssim\eta_{h}. \hskip2cm\Box$$

\subsection*{6.2. \ Numerical verification}

We compute two examples, Examples 2 and 3  in Section 3.3, to verify the reliability and  efficiency of the a posteriori estimator $\eta_{h}$ defined in (\ref{estimator}). We list the results of the relative error $e_{r}$, the relative a posteriori error $\eta_{r}$, and the ratio $\eta_{r}/e_{r}$  in Tables 10-12 and Figure 5 with
$$e_{r}:=\frac{\left(\|\boldsymbol{\sigma}-\boldsymbol{\sigma}_{h}\|_{0}^{2}+|\mathbf{u}-\mathbf{u}_{h}|_{1}^{2}\right)^{1/2}}{\left(\|\boldsymbol{\sigma}\|_{0}^{2}+|\mathbf{u}|_{1}^{2}\right)^{1/2}},\ \ \eta_{r}:=\frac{\eta_{h}}{\left(\|\boldsymbol{\sigma}\|_{0}^{2}+|\mathbf{u}|_{1}^{2}\right)^{1/2}}.$$
The numerical results show that the a posteriori estimator $\eta_{h}$ is reliable and efficient with the ratio $\eta_{r}/e_{r}$ being close to 1 in Example 2 and being around 4 in Example 3. It should be pointed out that in  Figure 5 the mesh-axis coordinates $2, 4, 8,16$ denote  the respective meshes $10\times 2, 20\times4, 40\times8, 80\times16$. 

\begin{table}[!h]\renewcommand{\baselinestretch}{1.25}\small
 \centering
 \caption{\small Numerical results of the a posteriori error estimator   for PS in Example 2}
\begin{tabular}{ccccccccccc}
 \hline
 &&regular&mesh &of  &Figure 3 && irregular&mesh&of &Figure 3\\
 \cline{3-6}\cline{8-11}
 $\nu$ & &$10\times2$ &$20\times4$ &$40\times8$ &$80\times16$ & &$10\times2$ &$20\times4$ &$40\times8$ &$80\times16$ \\\hline
       &$\eta_{r}$(e-4)   &4.3306   &2.1653   &1.0826  &0.5413  & &500.41    &99.415    &21.676   &4.6915\\
 0.49  &$e_{r}$(e-4)   &3.5126   &1.7563   &0.8781  &0.4391  & &452.18    &93.579    &21.203   &5.1370\\
       &$\eta_{r}/e_{r}$        &1.23    &1.23    &1.23  &1.23  & &1.11  &1.06     &1.02    &0.91 \\\hline
       &$\eta_{r}$(e-4)  &4.3300    &2.1650   &1.0825  &0.5413  & &496.40    &98.740    &21.586   &4.6974\\
 0.499 &$e_{r}$(e-4)  &3.5331    &1.7665   &0.8833  &0.4416  & &447.56    &92.648    &20.981   &5.0817\\
       &$\eta_{r}/e_{r}$        &1.23    &1.23   &1.23  &1.23  & &1.11    &1.07    &1.03   &0.92 \\\hline
       &$\eta_{r}$(e-4)  &4.3300    &2.1650   &1.0825  &0.5413  & &496.00    &98.677    &21.585   &4.7100\\
 0.4999&$e_{r}$(e-4)  &3.5352    &1.7676   &0.8838  &0.4419  & &447.10    &92.555    &20.959   &5.0764\\
       &$\eta_{r}/e_{r}$       &1.22    &1.22   &1.22   &1.22   & &1.11    &1.07    &1.03   &0.93\\\hline
       &$\eta_{r}$(e-4)  &4.3300    &2.1650   &1.0825  &0.5413  & &495.96    &98.671    &21.585   &4.7117\\
 0.49999&$e_{r}$(e-4) &3.5354    &1.7677   &0.8839  &0.4419  & &447.05    &92.546    &20.957   &5.0759\\
        &$\eta_{r}/e_{r}$     &1.22     &1.22   &1.22   &1.22  & &1.11    &1.07   &1.03   &0.93\\\hline
\end{tabular}
\end{table}

\begin{table}[!h]\renewcommand{\baselinestretch}{1.25}\small
 \centering
 \caption{\small Numerical results of the a posteriori error estimator   for ECQ4 in Example 2}
\begin{tabular}{ccccccccccc}
 \hline
&&regular&mesh &of  &Figure 3 && irregular&mesh&of &Figure 3\\
\cline{3-6}\cline{8-11}
 $\nu$ & &$10\times2$ &$20\times4$ &$40\times8$ &$80\times16$ & &$10\times2$ &$20\times4$ &$40\times8$ &$80\times16$ \\\hline
       &$\eta_{r}$(e-4)   &4.3306   &2.1653   &1.0826  &0.5413  & &480.69    &86.785    &18.365    &4.0010\\
 0.49  &$e_{r}$(e-4)   &3.5126   &1.7563   &0.8781  &0.4391  & &359.44    &75.927    &17.426    &4.2483\\
       &$\eta_{r}/e_{r}$        &1.23  &1.23   &1.23  &1.23  & &1.34   &1.14    &1.05    &0.94 \\\hline
       &$\eta_{r}$(e-4)  &4.3300    &2.1650   &1.0825  &0.5413  & &480.66    &86.998    &18.514    &4.0744\\
 0.499 &$e_{r}$(e-4)  &3.5331    &1.7665   &0.8833  &0.4416  & &359.37    &75.971    &17.436    &4.2495\\
       &$\eta_{r}/e_{r}$       &1.23    &1.23   &1.23  &1.23  & &1.34    &1.15    &1.06    &0.96\\\hline
       &$\eta_{r}$(e-4)  &4.3300    &2.1650   &1.0825  &0.5413  & &480.66    &87.025    &18.538    &4.0941\\
 0.4999&$e_{r}$(e-4)  &3.5352    &1.7676   &0.8838  &0.4419  & &359.37    &75.977    &17.437    &4.2500\\
       &$\eta_{r}/e_{r}$      &1.22   &1.22   &1.22  &1.22  & &1.34    &1.15    &1.06    &0.96\\\hline
       &$\eta_{r}$(e-4)  &4.3300    &2.1650   &1.0825  &0.5413  & &480.66    &87.027    &18.540    &4.0965\\
 0.49999&$e_{r}$(e-4) &3.5354    &1.7677   &0.8839  &0.4419  & &359.37    &75.977    &17.437    &4.2501\\
        &$\eta_{r}/e_{r}$     &1.22    &1.22   &1.22  &1.22  & &1.34    &1.15    &1.06    &0.97\\\hline
\end{tabular}
\end{table}

\begin{table}[!h]\renewcommand{\baselinestretch}{1.25}\small
 \centering
 \caption{\small Numerical results of the a posteriori error estimator   in Example 3}
\begin{tabular}{ccccccccccc}
 \hline
 &&regular&mesh &of  &Figure 3 && irregular&mesh&of &Figure 3\\\cline{3-6}\cline{8-11}
 method & &$10\times2$ &$20\times4$ &$40\times8$ &$80\times16$ & &$10\times2$ &$20\times4$ &$40\times8$ &$80\times16$ \\\hline
        &$\eta_{r}$  &0.4260   &0.2152    &0.1081     &0.05420  & &0.6232     &0.3137     &0.1579     &0.0793\\
 PS     &$e_{r}$  &0.1022   &0.0512   &0.0256    &0.0128  & &0.1806     &0.0859     &0.0424    &0.0211\\
        &$\eta_{r}/e_{r}$   &4.17    &4.20     &4.22      &4.23   & &3.45      &3.65       &3.72      &3.75\\\hline
        &$\eta_{r}$ &0.4260   &0.2152    &0.1081     &0.0542  & &0.5938     &0.3154      &0.1610     &0.0812\\
 ECQ4   &$e_{r}$ &0.1022   &0.0512   &0.0256    &0.0128  & &0.1850     &0.0910    &0.0453   &0.0226\\
        &$\eta_{r}/e_{r}$   &4.17    &4.20     &4.22      &4.23   & &3.21      &3.47       &3.55      &3.59\\\hline
\end{tabular}
\end{table}

\begin{figure}[!h]
 \subfigure[Example 2:   $\nu=0.49$]{\includegraphics[width=8cm]{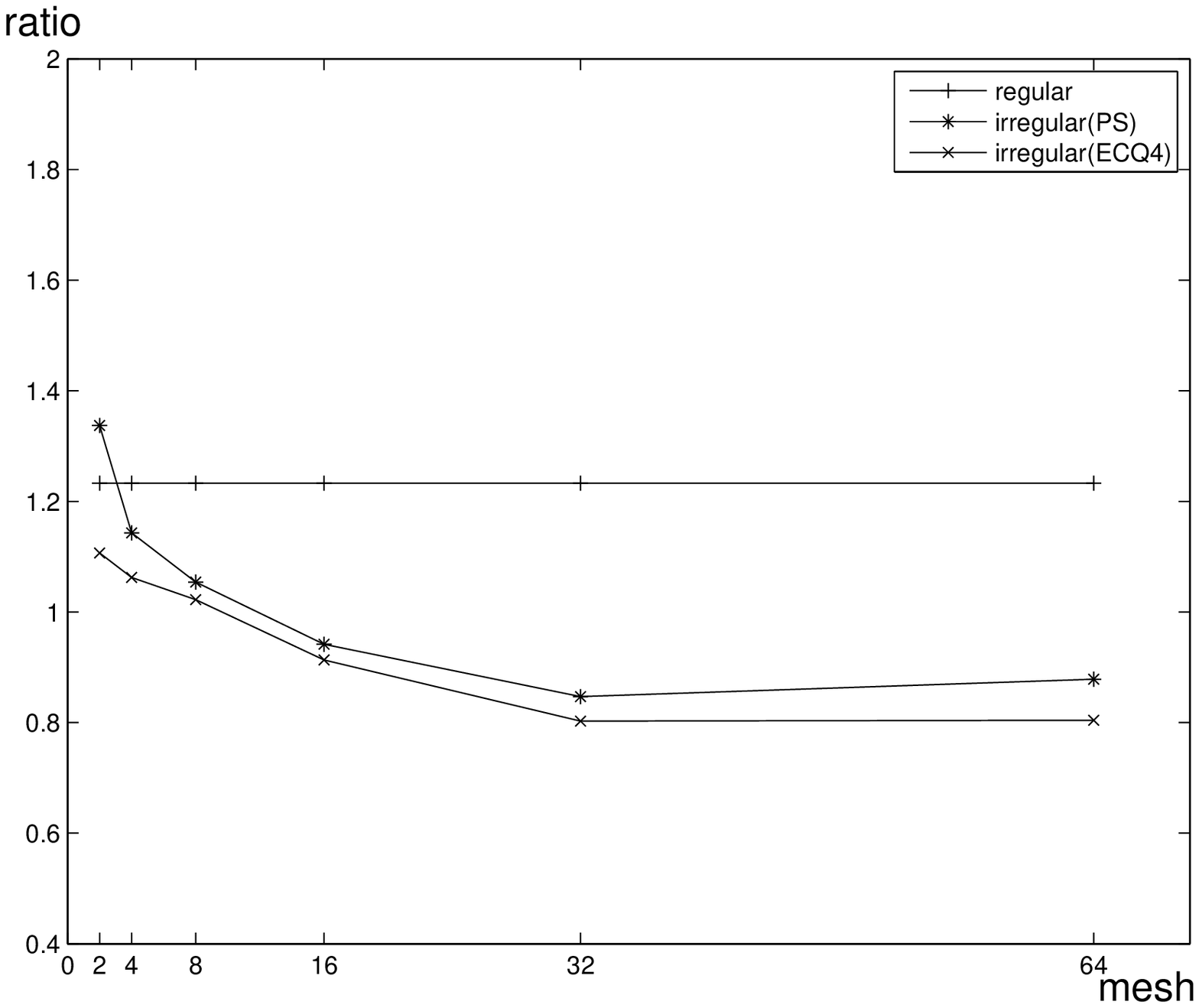}}
 \subfigure[Examle 3]{\includegraphics[width=8cm]{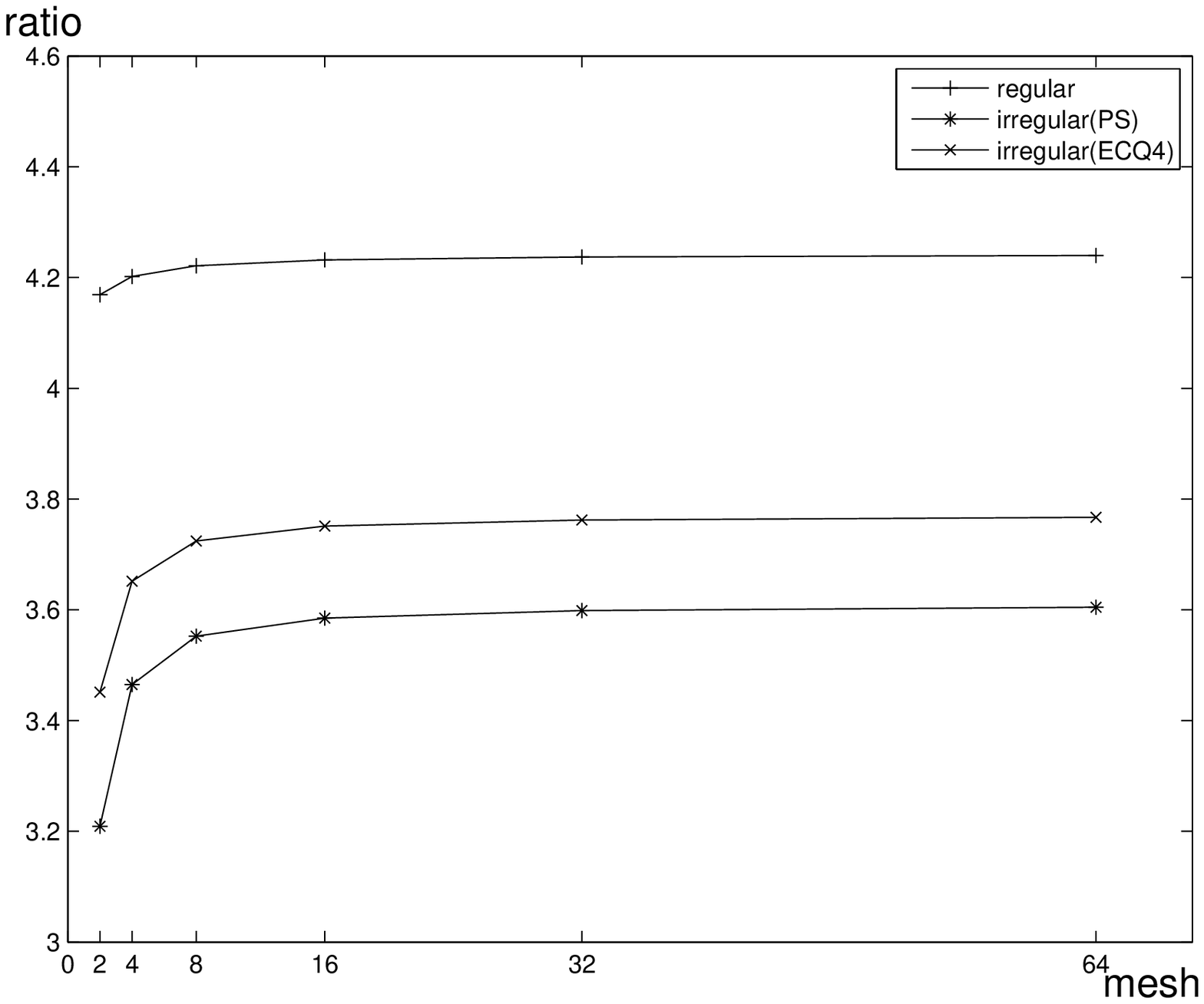}}
 \caption{The ratio $\eta_{r}/e_{r}$ for PS and ECQ4 }
\end{figure}

\section*{Acknowledgements}

This work was  supported  by DFG Research Center MATHEON.
The second author  would like to thank the Alexander von Humboldt Foundation
for the support through the Alexander von Humboldt Fellowship during his stay at
 Department of Mathematics of Humboldt-Universit$\ddot{\mbox{a}}$t zu Berlin, Germany. Part of his work was  supported by the
National Natural Science Foundation of China
   (10771150), the National Basic Research Program of China (2005CB321701),
    and the Program for New Century Excellent Talents in University (NCET-07-0584).  The work of the third author was also partly supported by the WCU program through KOSEF (R31-2008-000-10049-0). 
    
\def\refname{\Large\bfseries References}

\end{document}